\documentclass[12pt]{amsart}

\usepackage{tikz}
\usetikzlibrary{calc,matrix, arrows, babel, patterns}
\usepackage{tikz-cd}

\usepackage[utf8]{inputenc}
\usepackage[T1]{fontenc}

\usepackage{amsmath,amsthm,amsfonts,amssymb,latexsym}

\usepackage{hyperref}

\usepackage{tabto}

\usepackage{cleveref}

\usepackage{color, xcolor}

\usepackage{enumerate, enumitem, verbatim, graphicx}

\usepackage{mathrsfs, thmtools}

\usepackage{setspace}

\DeclareMathOperator{\Q}{\mathbb{Q}}

\DeclareMathOperator{\ZB}{\mathbf{Z}}
\DeclareMathOperator{\CB}{\mathbf{C}}
\DeclareMathOperator{\OB}{\mathbf{O}}
\DeclareMathOperator{\NB}{\mathbf{N}}

\DeclareMathOperator{\Syl}{Syl}

\DeclareMathOperator{\Sub}{\mathbf{Sub}}

\DeclareMathOperator{\Irr}{Irr}

\theoremstyle{remark}
\newtheorem*{remark}{Remark}

\theoremstyle{definition}

\theoremstyle{plain}

\newtheorem{thm}{Theorem}[section]
\newtheorem{lem}[thm]{Lemma}

\newtheorem{pro}[thm]{Proposition}
\newtheorem{cor}[thm]{Corollary}

\newtheorem*{conUn}{Conjecture}

\newtheorem*{thmA}{Theorem A}
\newtheorem*{thmB}{Theorem B}
\newtheorem*{thmC}{Theorem C}

\theoremstyle{definition}

\newtheorem*{ack}{Acknowledgments}

\numberwithin{equation}{section}

\begin{document}

% Title
%%%%%%%%%%%%%%%%%%%%%%%%%%%%%%%%%%%%%%%%%%%%%%%%%%%%%%%%%%%%%
\title[Subnormalizers and character correspondences in $p$-solvable groups]{Subnormalizers and character correspondences in $p$-solvable groups}

% Authors Information
%%%%%%%%%%%%%%%%%%%%%%%%%%%%%%%%%%%%%%%%%%%%%%%%%%%%%%%%%%%%%

\author{Gabriel A. L. Souza}
\address{Departament de Matemàtiques, Universitat de València, 46100 Burjassot, València, Spain}
\email{gabriel.area@uv.es}

% Thank you and classification
%%%%%%%%%%%%%%%%%%%%%%%%%%%%%%%%%%%%%%%%%%%%%%%%%%%%%%%%%%%%%

\thanks{The author is supported by Ministerio de Ciencia e Innovación (grant PREP2022-000021 tied to the project PID2022-137612NB-I00 funded by MCIN/AEI/10.13039/501100011033 and ``ERDF A way of making Europe'').}

\keywords{Picky elements, subnormalizers, character correspondences}

\subjclass[2020]{Primary 20C15}

\date{\today}

% Abstract
%%%%%%%%%%%%%%%%%%%%%%%%%%%%%%%%%%%%%%%%%%%%%%%%%%%%%%%%%%%%%

\begin{abstract}
	A new family of local-global conjectures in the representation theory of finite groups has recently been proposed by Moretó. We show that one of the strongest of these conjectures, the strong subnormalizer conjecture, holds for $p$-solvable groups when $p$ is odd, under the condition that the subnormalizer subset is a subgroup. We also prove it in general when $p$ is odd and the $p$-length of the group is $1$ and, in the process, obtain new properties related to the Glauberman correspondence.
\end{abstract}

\maketitle

 % Main body
%%%%%%%%%%%%%%%%%%%%%%%%%%%%%%%%%%%%%%%%%%%%%%%%%%%%%%%%%%%%%

\section{Introduction}

Recently, Moretó \cite{Mor} has proposed a new family of conjectures which provide a different
framework for viewing some local-global problems in the representation theory of finite groups. Let $G$ be a finite group, let $p$ be a prime and let $x \in G$ be a $p$-element. We write $\Irr^x(G)$ for the set of irreducible complex characters of $G$ which are non-zero when evaluated at $x$. We call a $p$-element \emph{picky} if it lies in a unique Sylow $p$-subgroup. One of the formulations of one of the main proposed conjectures (see \cite[Conjecture A]{Mor}) is as follows:
\begin{conUn}[Picky conjecture]
	Let $G$ be a finite group, let $p$ be a prime and let $x \in G$ be a picky $p$-element. Let $P \in \Syl_p(G)$ contain $x$. Then, there exists a bijection
	\begin{equation*}
		f: \Irr^x(G) \to \Irr^x(\NB_G(P))
	\end{equation*}
	such that, for all $\chi \in \Irr^x(G)$, $\chi(1)_p = f(\chi)(1)_p$ and $\Q(f(\chi)(x)) = \Q(\chi(x))$.
\end{conUn}
In fact, for many classes of groups, we can actually find a bijection $f$ as above satisfying the stronger condition $f(\chi)(x) = \epsilon_\chi \chi(x)$ for a sign $\epsilon_\chi \in \{1, -1\}$. In this case, we say $G$ satisfies the \emph{strong picky conjecture}.

This conjecture (and its strong form) have started receiving substantial attention in recent months, being proved for quasisimple groups of Lie type in non-defining characteristic \cite{GunterMandiPicky} and in its strong form both for symmetric groups \cite{SymmetricPicky} and, even more recently, for $p$-solvable groups with $p \neq 2$ \cite{PickyPSolvable}.

There is, in fact, a generalization of the picky conjecture, which also appears in \cite{Mor} and was studied in \cite{GunterSubnormalizer} and \cite{SymmetricPicky}: the \emph{subnormalizer conjecture}. If $x \in G$ is a $p$-element, we define the \emph{subnormalizer subset} of $x$ in $G$ as $S_G(x) = \{y \in G \mid \langle x \rangle {\triangleleft}{\triangleleft} \langle x, y \rangle\}$. This is, in general, not a subgroup of $G$, and so we define the \emph{subnormalizer} of $x$ in $G$ as $\Sub_G(x) = \langle S_G(x) \rangle$. It is the case that a $p$-element $x \in G$ is picky if and only if $\Sub_G(x) = \NB_G(P)$, where $x \in P$ and $P \in \Syl_p(G)$. This suggested that the following conjecture, \cite[Conjecture B]{Mor}, could be true:
\begin{conUn}[Subnormalizer conjecture]
	Let $G$ be a finite group, let $p$ be a prime and let $x \in G$ be a $p$-element. Then, there exists a bijection
	\begin{equation*}
		f: \Irr^x(G) \to \Irr^x(\Sub_G(x))
	\end{equation*}
	such that, for all $\chi \in \Irr^x(G)$, $\chi(1)_p = f(\chi)(1)_p$ and $\Q(f(\chi)(x)) = \Q(\chi(x))$.
\end{conUn}
As was the case with the picky conjecture, for many groups, such an $f$ can be found satisfying the stronger condition $f(\chi)(x) = \epsilon_\chi \chi(x)$ for some sign $\epsilon_{\chi} \in \{1, -1\}$, in which case we say that $G$ satisfies the \emph{strong subnormalizer conjecture}. 

This conjecture appears to be very deep, and there are very few classes of groups for which it is known to hold. By work done in \cite{SymmetricPicky}, it holds for symmetric groups for $p = 2$; it is also known for some elements in groups of Lie type of small rank \cite{GunterSubnormalizer} or with abelian Sylow $p$-subgroups \cite{GunterAbelianSylow}. 

It is on this conjecture that we focus our efforts, in the case of $p$-solvable groups for $p \neq 2$. Our main result is as follows:

\begin{thmA}
	Let $G$ be a $p$-solvable group, for $p \neq 2$, and let $x \in G$ be a $p$-element. Assume that $\langle x \rangle {\triangleleft}{\triangleleft} \Sub_G(x)$. Then, there exist a bijection
	\begin{equation*}
		f : \Irr^x(G) \to \Irr^x(\Sub_G(x))
	\end{equation*}
	such that $f(\chi)(1)_p = \chi(1)_p$ and $f(\chi)(x) = \epsilon_\chi \chi(x)$, for some sign $\epsilon_\chi \in \{1, -1\}$ 
\end{thmA}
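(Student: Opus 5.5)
We argue by induction on $|G|$, aiming to reduce to the case handled by the Glauberman correspondence. Write $H = \Sub_G(x)$; the hypothesis says $\langle x\rangle$ is subnormal in $H$, so $H = S_G(x)$ is a subgroup and every element of $H$ subnormalizes $\langle x \rangle$. The first step is to take $N = \OB_{p'}(G)$. If $N = 1$, then $\OB_p(G) \neq 1$ since $G$ is $p$-solvable, and we pass to a quotient or subgroup; if $N \neq 1$, we work inside $N\langle x\rangle$. The key observation is the standard fact (Wielandt-type) that in a $p$-solvable group, if $\langle x\rangle$ is subnormal in $\langle x,y\rangle$ for all $y \in H$, then $x \in \OB_p(H)$. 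Consequently $H$ controls the relevant structure: $\CB_N(x) \subseteq H$ and, more precisely, $H \cap N = \CB_N(x)$, since an element $n\in N$ subnormalizing $\langle x\rangle$ must satisfy $[n,x] \in N \cap \OB_p(\langle x,n\rangle) = 1$.

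Next, for $\chi \in \Irr^x(G)$, we take an irreducible constituent $\theta$ of $\chi_N$. A standard argument (using that $x$ is a $p$-element acting coprimely on $N$ and the formula $\chi(x)\neq 0$) shows that $\chi$ lies over some $x$-invariant $\theta$, and the $G$-orbit of such $\theta$ is determined up to $H$-conjugacy. We then apply the Glauberman correspondence $\theta \mapsto \theta^* \in \Irr(\CB_N(x))$ for the cyclic $p$-group $\langle x\rangle$, which is equivariant under $\NB_G(\langle x \rangle)$. We use Clifford theory to reduce to $G_\theta$ and $H_{\theta^*}$, and then apply the results on Glauberman correspondence proved earlier in the paper (character triple isomorphisms compatible with the values at $x$, in the spirit of Dade--Navarro/Turull above the Glauberman correspondence, available since $p$ is odd) to pass from $(G_\theta, N, \theta)$ to $(\NB_{G_\theta}(N\langle x\rangle)\ldots, \CB_N(x), \theta^*)$ preserving $p$-parts of degrees and values at $x$ up to sign. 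The sign $\epsilon_\chi$ comes from $\theta(x) = \epsilon\,\theta^*(x)$ for the Glauberman correspondent with $p$ odd. Then the remaining group $\NB_G(N\langle x\rangle)$-part is smaller (or $N$ is central), and induction applies, using that $\Sub$ of $x$ in the smaller group is still contained in $H$ and still has $\langle x\rangle$ subnormal. The case $\OB_{p'}(G)=1$ is handled by working modulo $\OB_p(G)$ or by noting that then the structure is controlled by the $p$-part and one can restrict to $\NB_G(\OB_p(G)\langle x\rangle\ldots)$ by iterating along the $p$-series.

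The main obstacle is verifying that the Subnormalizer behaves well under these reductions: namely that $\Sub_{G_\theta}(x)$ equals $H_{\theta^*}$ (or the appropriate stabilizer) and that Clifford correspondence above $\theta$ and above $\theta^*$ match with the correct values at $x$, not only degrees. I would first try to prove $H \cap G_\theta = \Sub_{G_\theta}(x)$ directly from $x\in \OB_p(H)$, and then use the Glauberman-compatible character triple isomorphisms to transfer values at $x$, checking that induction from stabilizers preserves the nonvanishing at $x$ since $x$ lies in the stabilizers' relevant conjugates.
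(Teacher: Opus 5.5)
There is a genuine gap, and it comes from your choice of $N=\OB_{p'}(G)$ as the level at which to apply Glauberman. Every later step needs $G=NH$, where $H=\Sub_G(x)$. This includes matching $G$-orbits of $\langle x\rangle$-invariant $\theta\in\Irr(N)$ with $H$-orbits, and comparing $|G:G_\theta|_p$ with $|H:H_\theta|_p$. It also includes evaluating induced characters at $x$: the induction formula sums over \emph{all} $G$-conjugates of $x$ lying in $G_\theta$, and you have no control at $x^g$ for $g\notin NH$.

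The equality $G=NH$ fails in general. In the picky case $H=\NB_G(P)$, which Theorem A covers, $G=\OB_{p'}(G)\NB_G(P)$ forces $\OB_{p'}(G)P\unlhd G$, that is, $p$-length at most one. When $G\neq NH$, the quotients $G/N$ and $H/(H\cap N)\cong HN/N$ have different orders. So no character triple isomorphism $(G,N,\theta)\to(H,\CB_N(x),\theta^*)$ exists, even for $G$-invariant $\theta$. Your fallback target ``$\NB_{G_\theta}(N\langle x\rangle)\ldots$'' does not fit \Cref{TurullLemma} either, since that lemma needs a $p$-subgroup $P$ with $NP$ normal, and $N\langle x\rangle$ is not normal.

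The paper avoids this by working from the top. It sets $L_0=\OB^{p'}(G)$, $K_i=\OB^p(L_{i-1})$, $L_i=\OB^{p'}(K_i)$, and takes the least $k$ with $G\neq L_k\Sub_G(x)$. It shows $K_kQ\,{\triangleleft}{\triangleleft}\,G$ for a $p$-subgroup $Q\ni x$, hence $G=K_k\Sub_G(x)$ by \Cref{SubnormalizerProperties}(iv). It then compares $G$ with the proper subgroup $L_k\Sub_G(x)$, to which induction applies. The Dade--Turull isomorphism is used only after reducing to $L$ central and $\theta$ invariant. At that point \Cref{SubnormalizerProperties}(v) gives $\Sub_G(x)=\NB_G(Q)$, with $Q$ the intersection of the Sylow $p$-subgroups containing $x$, and so $XQ\unlhd G$. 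The lemma's canonical-extension property then gives values up to one sign, for every $y$ with the same subnormalizer.

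Three further pieces are missing.

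\begin{enumerate}
\item \emph{The bottom of the section.} $L=L_k$ is in general neither a $p'$-group nor central, and characters of $G$ need not contain it in their kernel. So ``working modulo $\OB_p(G)$'' simply loses characters. The paper needs the relative Glauberman correspondence, Clifford theory over an $\langle x\rangle$-invariant $\xi\in\Irr(L)$ (\Cref{Part3}), and a strong character triple isomorphism making $L$ central (\Cref{Part2}).

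\item \emph{Inertia groups.} Your observation $H\cap N=\CB_N(x)$ is correct; the same argument shows that, at every level of the paper's induction, $K\cap H$ is just the relative centralizer. It makes $H_\theta\le H_{\theta^*}$ easy. The reverse inclusion, however, does not follow from equivariance under $\NB_G(\langle x\rangle)$, because $H$ does not normalize $\langle x\rangle$. Nor does it follow from $H\cap G_\theta=\Sub_{G_\theta}(x)$, which is just \Cref{SubnormalizerProperties}(ii). It is the content of Theorem C (see \Cref{relThmC}). That is proved by climbing a subnormal series from $\langle x\rangle$ and using uniqueness of the invariant constituent with multiplicity prime to $p$.

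\item \emph{The sign.} ``$\theta(x)=\epsilon\,\theta^*(x)$'' is meaningless, since $x\notin N$. The correct relation is $\widehat{\theta}(x)=\epsilon\,\theta^*(1)$, where $\widehat{\theta}$ is the canonical extension of $\theta$ to $N\langle x\rangle$. You also need this sign to be the same at every conjugate of $x$ entering the induction formula. The paper secures this by identifying it with the sign of $[\theta_D,\varphi]$ modulo $p$.
\end{enumerate}
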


The condition $\langle x \rangle {\triangleleft}{\triangleleft} \Sub_G(x)$ is satisfied precisely when $S_G(x) = \Sub_G(x)$; that is, when $S_G(x)$ is a subgroup of $G$. This is because, by definition, $S_G(x)$ contains all subgroups in which $\langle x \rangle$ is subnormal. Notice how the strong picky conjecture for $p$-solvable groups with $p \neq 2$ follows from Theorem A, since $\langle x \rangle$ is always subnormal in the normalizer of a Sylow $p$-subgroup containing it. We also provide a couple of constructions to obtain elements satisfying this condition. Theorem A is thus a substantial generalization of \cite[Theorem A]{PickyPSolvable}.

Our general strategy for proving Theorem A follows that of \cite{PickyPSolvable}. However, most of the proof must employ significantly new ideas, adapted to this more general setting. 

For $p$-solvable groups of $p$-length one -- and, in particular, those with a normal $p$-complement -- we are able to go a step further and completely prove the strong subnormalizer conjecture for $p \neq 2$, under no hypotheses on $x$.

\begin{thmB}
	Let $p$ be an odd prime, let $G$ be a $p$-solvable group of $p$-length one and let $x$ be a $p$-element of $G$. Then, there exists a bijection $$f: \Irr^x(G) \to \Irr^x(\Sub_G(x))$$ such that $f(\chi)(1)_p = \chi(1)_p$ and $f(\chi)(x) = \epsilon_\chi \chi(x)$ for some $\epsilon_\chi \in \{1, -1\}$.
\end{thmB}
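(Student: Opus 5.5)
The plan is to reduce everything to Clifford theory over $N=\OB_{p'}(G)$ and the Glauberman correspondence for the action of $\langle x\rangle$ on $N$. Since $G$ has $p$-length one, $G/N$ has a normal Sylow $p$-subgroup. In a $p$-closed group $X$ with Sylow $p$-subgroup $P$ containing $x$, we have $\Sub_X(x)=X$. Indeed, for any $y\in X$, $\langle x,y\rangle\cap P$ is a normal $p$-subgroup of $\langle x,y\rangle$ containing $x$, and $\langle x\rangle$ is subnormal in that $p$-group.

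So the first step is to determine $H=\Sub_G(x)$, aiming to prove
\[
HN=G \qquad\text{and}\qquad H\cap N=\CB_N(x).
\]
The inclusion $S_G(x)\cap N\subseteq \CB_N(x)$ is easy. If $y\in N$ and $\langle x\rangle\,{\triangleleft}{\triangleleft}\,\langle x,y\rangle$, then $x\in\OB_p(\langle x,y\rangle)$. Since $\langle x,y\rangle\cap N$ is a normal $p'$-subgroup, $[x,\langle x,y\rangle\cap N]=1$, so $y\in\CB_N(x)$. For $HN=G$, I would lift generators of $\Sub_{G/N}(xN)=G/N$: every coset should contain an element $y$ with $\langle x\rangle$ subnormal in $\langle x,y\rangle$. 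I would obtain such $y$ by a Frattini/Schur--Zassenhaus argument, choosing $y$ to normalize a suitable $x$-invariant complement.

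The delicate part is $H\cap N=\CB_N(x)$. The subgroup generated by $S_G(x)$ could a priori meet $N$ in more than $\CB_N(x)$. I expect this to be the main obstacle. I would first try to show that $H$ normalizes $\CB_N(x)$ and that $H/\CB_N(x)$ embeds in $G/N$, by exhibiting a single $p$-length-one subgroup containing all of $S_G(x)$ in which $\langle x\rangle \CB_N(x)$ behaves well.

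Next, take $\chi\in\Irr^x(G)$ lying over $\theta\in\Irr(N)$. Since $x$ is a $p$-element and $\chi(x)\neq 0$, the formula for induced characters from $G_\theta$ shows that some $G$-conjugate of $\theta$ is $x$-invariant. Therefore $\Irr^x(G)$ decomposes along $G$-orbits of $x$-invariant $\theta$, or equivalently along $H$-orbits of such $\theta$, using $HN=G$. The Glauberman correspondence $\theta\mapsto\theta^*\in\Irr(\CB_N(x))$ is $\NB_G(\langle x\rangle)$-equivariant. I would extend this to $H$-equivariance, giving $H\cap G_\theta=H_{\theta^*}$, and then reduce to a bijection
\[
\Irr^x(G_\theta\mid\theta)\to\Irr^x(H_{\theta^*}\mid\theta^*).
\]
Induction preserves $\Irr^x$ and values at $x$ along the way, since the relevant conjugates are controlled.

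For the local bijection, I would use the isomorphism of character triples $(G_\theta,N,\theta)$ and $(H_{\theta^*},\CB_N(x),\theta^*)$ coming from the Glauberman correspondence, in the form of Dade/Navarro--Turull. Here $p$ odd gives the sign control $\chi(x)=\epsilon\,\chi^*(x)$ and preserves the $p$-parts of degrees, because the ratios $\chi(1)/\theta(1)$ match and $\theta(1)/\theta^*(1)$ is a $p'$-number. The remaining check is that values at $x$ are transported up to a sign $\epsilon_\chi$. This should follow from the character formulas relating $\theta$ and $\theta^*$ on $x\CB_N(x)$, namely $\theta(x c)=\pm\theta^*(c)$ for suitable extensions, together with the fact that $G_\theta/N\cong H_{\theta^*}/\CB_N(x)$ is $p$-closed.
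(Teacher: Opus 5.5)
There is a genuine gap: your plan depends on $H\cap N=\CB_N(x)$, and this is false in general, so the obstacle you flagged cannot be overcome. Since $G=N\NB_G(P)$ and $H=\langle \NB_G(P),\CB_N(x)\rangle$, one has $H\cap N=\CB_N(x)^{\NB_G(P)}=\langle \CB_N(x^g)\mid g\in\NB_G(P)\rangle$. This is strictly larger than $\CB_N(x)$ whenever $\NB_G(P)$ does not normalize $\CB_N(x)$.

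For example, take $p=3$ and $G=F_{21}\wr C_3$, where $F_{21}=C_7\rtimes C_3$ is the Frobenius group, and let $x$ generate the $C_3$ in the first base coordinate. Then $N=C_7^3$ and $\CB_N(x)=1\times C_7\times C_7$. The top $C_3$ lies in a Sylow $3$-subgroup containing $x$, hence in $H$, and the conjugates of $\CB_N(x)$ under it generate $N$. So $H\cap N=N$. (Replacing $G$ by $G\times F_{21}$ and $x$ by $(x,z)$ with $z$ of order $3$ gives a proper subnormalizer $G\times C_3$ with the same defect.)

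Consequently $H$ does not act on $\Irr(\CB_N(x))$. The ``$H$-equivariance'' of $\theta\mapsto\theta^*$, the group $H_{\theta^*}$ and the triple $(H_{\theta^*},\CB_N(x),\theta^*)$ are all undefined, and the characters of $H$ you must parametrize lie over $\Irr(H\cap N)$. Separately, the Dade--Turull isomorphism does not land where you want. It goes from $(G_\theta,N,\theta)$ to $(\NB_{G_\theta}(Q),\CB_N(Q),\theta^*)$ for a $p$-subgroup $Q$ with $NQ\unlhd G_\theta$, and $N\langle x\rangle$ need not be normal in $G_\theta$.

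The paper supplies the missing ideas as follows. It replaces the Glauberman correspondence by the correspondence $f_x\colon\Irr_{\langle x\rangle}(N)\to\Irr_{\langle x\rangle}(H\cap N)$ of \Cref{correspondence}, which is defined because $\CB_N(x)\le H\cap N$. It replaces equivariance by Theorem C, which gives $G_\theta\cap H=H_\varphi$ for $\varphi=f_x(\theta)$. Then, for $Q\in\Syl_p(G_\theta)$ containing $x$, one has $f_Q(\theta)=\varphi$. So \Cref{TurullLemma} applied to $(G_\theta,N,\theta)$ and to $(H_\varphi,H\cap N,\varphi)$ yields two isomorphisms with a common target, since $\NB_G(Q)\le H$ forces $\NB_{H_\varphi}(Q)=\NB_{G_\theta}(Q)$. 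Because $p$ is odd, their composite sends $\widehat{\theta}$ to $\widehat{\varphi}$.

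Your remark that ``the relevant conjugates are controlled'' also hides a real point. After inducing from $G_\theta$, $\chi(x)$ is a sum of values at all $\NB_G(P)$-conjugates $y$ of $x$ lying in $Q$. So you need $\chi(y)=\epsilon F(\chi)(y)$ for all of them, with one and the same sign. The Glauberman correspondents for these $y$ live on the various $\CB_N(y)$, not on $\CB_N(x)$. The paper gets uniformity because every $\CB_N(y)$ lies in $H\cap N$ and $[\theta_{\CB_N(y)},\pi_y(\theta)]\equiv[\theta_{H\cap N},\varphi]\,[\varphi_{\CB_N(y)},\pi_y(\theta)]\pmod{p}$, so $\epsilon\equiv[\theta_{H\cap N},\varphi]$ independently of $y$. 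This uniformity is also why the paper builds the bijection on all of $\Irr(G_\theta\mid\theta)$ rather than on $\Irr^x(G_\theta\mid\theta)$, which induction need not carry onto $\Irr^x(G\mid\theta)$.
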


In the process of proving Theorem B, we obtain the structure of $\Sub_G(x)$ when $G$ has a normal $p$-complement, as well as when $G$ is $p$-solvable with $p$-length one, and show that it is relatively controlled.

In order to prove Theorems A and B, we will need to study generalizations of the Glauberman correspondence and properties thereof. More specifically, we will need to control inertia groups in contexts where these generalized correspondences are defined. When returning to the context of the Glauberman correspondence, we obtain the following result, which, as far as we are aware, has not yet appeared in the literature.

\begin{thmC}
	Suppose $A$ acts on a finite group $G$ by automorphisms. Let $B {\triangleleft}{\triangleleft} A$ be a $p$-subgroup, where $p$ does not divide $|G|$. Let $C = \CB_{G}(B)$ and write $\pi_B$ for the $B$-Glauberman correspondence. Let $C \leq H \leq G$ be an $A$-invariant subgroup, let $\chi \in \Irr_B(G)$ and let $\varphi \in \Irr_B(H)$ be the unique $B$-invariant irreducible character of $H$ such that $\pi_B(\chi) = \pi_B(\varphi)$. Then, $A_\chi = A_\varphi$. Furthermore, for all $p$-subgroups $D \leq A_\chi$ such that $\CB_{G}(D) \leq H$, $\pi_D(\chi) = \pi_D(\varphi)$.
\end{thmC}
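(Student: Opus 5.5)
The plan is to reduce everything to three standard facts about the Glauberman correspondence for $p$-groups acting coprimely, and then exploit the fact that a subnormal $p$-subgroup of $A$ lies in $\OB_p(A)$. Throughout, for a $p$-subgroup $E$ of $A$ with $\CB_G(E)\le H$, note that $\CB_H(E)=\CB_G(E)$. So both $\pi_E\colon \Irr_E(G)\to\Irr(\CB_G(E))$ and $\pi_E\colon\Irr_E(H)\to \Irr(\CB_G(E))$ are bijections onto the same set. Hence every $\chi\in\Irr_E(G)$ has a unique \emph{$E$-partner} $\varphi\in\Irr_E(H)$ with $\pi_E(\chi)=\pi_E(\varphi)$. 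The three facts are:

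\begin{enumerate}[label=(\roman*)]
\item \emph{Naturality.} For $a\in A$ we have $\pi_{E^a}(\chi^a)=\pi_E(\chi)^a$.
\item \emph{Transitivity.} If $B\triangleleft E$ are $p$-groups and $\chi\in\Irr_E(G)$, then $\pi_E(\chi)$ equals the Glauberman correspondent of $\pi_B(\chi)$ under the coprime action of $E/B$ on $\CB_G(B)$ (Isaacs, Ch.~13).
\item \emph{Inheritance.} This is a consequence of (i) and (ii). Let $B\triangleleft E$ with $\CB_G(B)\le H$, let $\chi$ be $E$-invariant, and let $\varphi$ be its $B$-partner. Then $\varphi$ is $E$-invariant, and $\varphi$ is also the $E$-partner of $\chi$. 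Indeed, for $e\in E$, $\varphi^e$ is $B$-invariant and $\pi_B(\varphi^e)=\pi_B(\varphi)^e=\pi_B(\chi)^e=\pi_B(\chi)$. Uniqueness then gives $\varphi^e=\varphi$, and (ii) applied on both sides gives $\pi_E(\chi)=\pi_E(\varphi)$.
\end{enumerate}

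Iterating (iii) along a subnormal series gives the key lemma. If $B{\triangleleft}{\triangleleft}E$ are $p$-subgroups, $\CB_G(B)\le H$, and $\chi$ is $E$-invariant, then the $B$-partner of $\chi$ in $H$ equals its $E$-partner. The same holds with the roles of $G$ and $H$ exchanged: the $B$-partner in $G$ of an $E$-invariant $\varphi\in\Irr(H)$ is its $E$-partner.

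Now prove $A_\chi=A_\varphi$. Since $B$ is a subnormal $p$-subgroup, $B\le \OB_p(A)$, and so does every conjugate $B^a$. Let $a\in A_\chi$ and set $D=\langle B,B^a\rangle\le \OB_p(A)\cap A_\chi$. This $D$ is a $p$-group in which $B$ and $B^a$ are subnormal, and $\CB_G(D)\le \CB_G(B)=C\le H$. By (i), conjugation by $a$ shows that $\varphi^a$ is the $B^a$-partner of $\chi^a=\chi$; here we use that $H$ and $C^a=\CB_G(B^a)$ are $a$-stable or contained in $H$, since $H$ is $A$-invariant. Applying the key lemma twice, both $\varphi$ and $\varphi^a$ equal the $D$-partner of $\chi$, so $\varphi^a=\varphi$. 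The reverse inclusion $A_\varphi\le A_\chi$ is the same argument with $G$ and $H$ interchanged, using that $\chi$ is the unique $B$-partner of $\varphi$ in $G$.

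For the final assertion, let $D\le A_\chi$ be a $p$-subgroup with $\CB_G(D)\le H$. Choose $P\in\Syl_p(A_\chi)$ containing $D$. Then $\OB_p(A)\cap A_\chi$ is a normal $p$-subgroup of $A_\chi$, hence lies in $P$, so $B\le P$. In the $p$-group $P$ both $B$ and $D$ are subnormal, and $\CB_G(P)\le\CB_G(D)\le H$. Let $\varphi_D$ be the $D$-partner of $\chi$ in $H$. By the key lemma for $D{\triangleleft}{\triangleleft}P$, $\varphi_D$ is the $P$-partner of $\chi$. By the key lemma for $B{\triangleleft}{\triangleleft}P$, $\varphi$ is also the $P$-partner of $\chi$. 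Hence $\varphi=\varphi_D$, that is, $\pi_D(\chi)=\pi_D(\varphi)$.

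The main obstacle I expect is making fact (ii) fully rigorous in the form needed: the correspondence for $E$ factors through $\CB_G(B)$ with the induced $E/B$-action, and this must hold compatibly for both $G$ and $H$, which share $\CB_G(B)$. I would cite the standard result for solvable (in particular $p$-) groups of automorphisms, where the Glauberman map is built precisely by such iteration and is independent of the chosen normal series. Once that is in place, the remaining steps are formal uniqueness arguments.
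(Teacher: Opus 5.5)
Your proof is correct, but it is organized differently from the paper's. The paper never uses naturality or transitivity of the Glauberman map. Instead it works with the refined correspondence of \Cref{correspondence}, which identifies $\varphi$ as the unique $B$-invariant irreducible constituent of $\chi_H$ whose multiplicity is prime to $p$; dually, $\chi$ is the unique such constituent of $\varphi^G$. This criterion sees $B$ only through invariance, so the paper can run directly along a subnormal series $B \lhd A_1 \lhd \cdots \lhd A_k = A_\chi$. If $\varphi$ is $A_i$-invariant and $a \in A_{i+1}$, then $\varphi^a$ is again $A_i$-invariant, hence $B$-invariant, and has the same multiplicity in $\chi_H$, so $\varphi^a=\varphi$. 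The final assertion is then immediate: once $\varphi$ is $D$-invariant, the same criterion for $D$ forces $\varphi = f_D(\chi)$. Your partner relation, by contrast, is tied to a specific $p$-group. So to compare the $B$-, $B^a$- and $D$-partners you must first place these groups in a common $p$-subgroup of $A_\chi$, which is exactly what $B \le \OB_p(A)$ and $\OB_p(A)\cap A_\chi \le P$ achieve. What your route buys is independence from \Cref{correspondence}: you need only standard properties of $\pi$, and the reverse inclusion $A_\varphi \le A_\chi$ comes from the symmetry of the partner relation rather than from the induction half of that theorem. Your worry about (ii) is unnecessary. It is part of Glauberman's theorem for solvable acting groups, and for $p$-groups it follows at once: restrict $\chi$ first to $\CB_G(B)$ and then to $\CB_G(E)$, using that $\pi_E(\chi)$ is the unique irreducible constituent of $\chi_{\CB_G(E)}$ with multiplicity prime to $p$. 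What the paper's route buys is that the identical argument proves the relative version (\Cref{relThmC}, with $f_B$ the correspondence of \Cref{relativeCorrespondence}), which is what the paper actually needs later. Your argument would additionally require naturality and transitivity for the relative correspondence.
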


In fact, we prove a \emph{relative} version of this result, from which Theorem C follows as a corollary (see \Cref{relThmC}).

\section{Preliminary results}

We begin by collecting some results which will be fundamental to our purposes and that will be used throught the paper. Our notation for characters follows that of \cite{CTFG}. First, there are a few elementary properties of the subnormalizer subgroup that we are going to need. Most of them were originally proved by Casolo, who extensively studied the subnormalizer subset in \cite{CasoloSubnormalizers} and \cite{CasoloSubnormalizersSubgroups}.

\begin{pro}
	\label{SubnormalizerProperties}
	Let $x \in G$ be a $p$-element, where $G$ is a finite group. Then:
	\begin{enumerate}
		\item If $H \leq G$ is any subgroup of $G$ containing $x$ such that $\langle x \rangle {\triangleleft}{\triangleleft} H$, then $H \subseteq S_G(x) \subseteq \Sub_G(x)$. In particular, if $Q \leq G$ is any $p$-subgroup containing $x$, then $\NB_G(Q) \leq \Sub_G(x)$;
		\item For all subgroups $H \leq G$, $\Sub_H(x) \leq \Sub_G(x)$. If $\langle x \rangle {\triangleleft}{\triangleleft} \Sub_G(x)$, then $\Sub_H(x) = \Sub_G(x) \cap H$;
		\item If $N \unlhd G$, then $\Sub_{G/N}(xN) = \Sub_G(x)N/N$;
		\item If $N {\triangleleft}{\triangleleft} G$ and $x \in N$ is a $p$-element, then, $G = N\Sub_G(x)$;
		\item $\Sub_G(x) = S_G(x)$ if and only if $\Sub_G(x) = \NB_G(Q)$, where $Q$ is the intersection of all Sylow $p$-subgroups of $G$ which contain $x$;
		\item If $y \in G$, then $S_G(x)^y = S_G(x^y)$; in particular, $\Sub_G(x)^y = \Sub_G(x^y)$.
	\end{enumerate}
\end{pro}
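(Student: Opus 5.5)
We prove the six items essentially independently, using two classical facts about subnormality: Wielandt's theorem that $\langle x\rangle$ is subnormal in $G$ if and only if it is subnormal in $\langle x, x^g\rangle$ for all $g\in G$, and the elementary fact that subnormality passes to intermediate subgroups and to quotients.

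For (1), let $y\in H$. Then $\langle x\rangle \le \langle x,y\rangle\le H$, and a subnormal series of $\langle x\rangle$ in $H$, intersected with $\langle x,y\rangle$, gives a subnormal series in $\langle x,y\rangle$. So $y\in S_G(x)$. For the particular case, $Q \unlhd \NB_G(Q)$ and $\langle x\rangle$ is subnormal in the $p$-group $Q$ (every subgroup of a nilpotent group is subnormal), so $\langle x\rangle {\triangleleft}{\triangleleft} \NB_G(Q)$.

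For (2), the inclusion $S_H(x)\subseteq S_G(x)$ is immediate from the definition, giving $\Sub_H(x)\le\Sub_G(x)$. If $\langle x\rangle{\triangleleft}{\triangleleft}\Sub_G(x)$, then $\langle x\rangle$ is subnormal in $\Sub_G(x)\cap H$ as well. By (1) applied inside $H$, this gives $\Sub_G(x)\cap H\subseteq S_H(x)$, and the reverse inclusion is clear.

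Item (6) is just conjugation of the defining condition.

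For (3), the inclusion $\Sub_G(x)N/N\le \Sub_{G/N}(xN)$ follows because images of subnormal series are subnormal series. For the converse, suppose $yN\in S_{G/N}(xN)$, and set $M=\langle x,y\rangle N$, so that $\langle x\rangle N/N{\triangleleft}{\triangleleft} M/N$. Then $\langle x\rangle N{\triangleleft}{\triangleleft} M$, and hence $\Sub_M(x)\,N\supseteq M$ will follow once we show $L=\langle x\rangle N$ satisfies $L=N\,\Sub_L(x)$. This is the Frattini-type statement, which I would prove via a Sylow argument. Let $P\in\Syl_p(L)$ contain $x$; then $L = N\NB_L(P\cap N)$-type decompositions together with (1) give $L\subseteq N\Sub_L(x)$. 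Since $\Sub_L(x)\le\Sub_M(x)\le \Sub_G(x)$ by (2), we get $y\in \Sub_G(x)N$. The precise way to lift along the subnormal chain $L{\triangleleft}{\triangleleft}M$ is to iterate this one normal step at a time.

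Item (4) is proved by induction on the subnormal defect. If $N\unlhd G$ and $x\in N$, fix $P\in\Syl_p(N)$ containing $x$. The Frattini argument gives $G=N\NB_G(P)$, which is not quite enough because $x$ need not be central in $P$. Instead, I would use the Frattini argument on the conjugacy class: $G=N\,\NB_G(\langle x\rangle^{\,?})$ fails in general, so I would rather apply Frattini to $R$, the intersection of Sylow $p$-subgroups of $N$ containing $x$. The set of such Sylow subgroups is permuted by $\NB_G(\ldots)$, and combining this with (1) yields $G=N\,\NB_G(R)\,$-like factorizations inside $\Sub_G(x)$. The general subnormal case then follows by induction through the chain, using (2).

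Item (5) uses Casolo's theorem that $S_G(x)$ is a subgroup exactly when $\langle x\rangle$ is subnormal in it. Under this condition $\Sub_G(x)$ has $\langle x\rangle$ subnormal, so $O_p(\Sub_G(x))$ contains $x$. A Sylow argument then identifies $O_p(\Sub_G(x))$ with $Q$, giving $\Sub_G(x)\le\NB_G(Q)$, and the reverse inclusion is (1). Conversely, if $\Sub_G(x)=\NB_G(Q)$, then $\langle x\rangle\le Q\unlhd \NB_G(Q)$ is subnormal, so by (1) $S_G(x)=\Sub_G(x)$.

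The main obstacle is the lifting/Frattini step in (3)–(4), which I expect to rely on citing Casolo.
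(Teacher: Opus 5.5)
Your items (1), (2) and (6) are fine and agree with the paper. The genuine gap is in (4), and (3) inherits it. In the normal case you write down $G = N\NB_G(P)$ with $x \in P \in \Syl_p(N)$, and then discard it because ``$x$ need not be central in $P$''. That objection is beside the point. By your own item (1), $\NB_G(P) \le \Sub_G(x)$ for \emph{any} $p$-subgroup $P$ containing $x$, so the Frattini argument already gives $G = N\NB_G(P) = N\Sub_G(x)$. The subnormal case then follows by induction along $N = N_0 \unlhd N_1 \unlhd \cdots \unlhd N_k = G$, since $N_{i+1} = N_i\Sub_{N_{i+1}}(x) \subseteq N_i\Sub_G(x)$ by (2). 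What you substitute is not an argument. The set of Sylow $p$-subgroups of $N$ containing $x$ is not $G$-stable, so there is no Frattini factorization through their intersection $R$, and in fact it fails. For instance, take $p = 2$, $N = S_3 \times S_3 \unlhd G = S_3 \wr C_2$, and $x = (t,1)$ with $t$ a transposition. Then $R = \langle t \rangle \times 1$ and $\NB_G(R) \le N$, so $N\NB_G(R) = N \neq G$.

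In (3) the reduction is misstated. For $L = \langle x \rangle N$ the equality $L = N\Sub_L(x)$ is trivial, because $x \in \Sub_L(x)$, and it says nothing about $M = \langle x, y\rangle N$. What you need is $M = L\Sub_M(x)$, which is exactly (4) applied to $L {\triangleleft}{\triangleleft} M$ with $x \in L$. It gives $M = N\langle x\rangle \Sub_M(x) = N\Sub_M(x) \subseteq N\Sub_G(x)$, hence $S_{G/N}(xN) \subseteq \Sub_G(x)N/N$, and your other inclusion finishes the argument. With these two repairs, your treatment of (3) and (4) is self-contained and elementary, whereas the paper simply cites Casolo's lemmas.

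Your (5) is a valid alternative to the paper's appeal to Casolo, which uses $S_G(x) = S_G(Q)$ and the fact that a subgroup $S_G(Q)$ equals $\NB_G(Q)$. Apply Wielandt's criterion inside the group $S_G(x)$, noting $\langle x, x^g\rangle \le \langle x, g\rangle$; this gives $\langle x \rangle {\triangleleft}{\triangleleft} \Sub_G(x)$. The Sylow $p$-subgroups of $G$ containing $x$ are then exactly the Sylow $p$-subgroups of $\Sub_G(x)$, so their intersection $Q$ equals $\OB_p(\Sub_G(x))$, and hence $\Sub_G(x) \le \NB_G(Q)$.
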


\begin{proof}
	Recall that $\Sub_G(x) = \langle S_G(x) \rangle$, where $S_G(x) = \{y \in G \mid \langle x \rangle {\triangleleft}{\triangleleft} \langle x, y \rangle\}$.
	\begin{enumerate}
		\item Let $h \in H$. Then, since $\langle x \rangle {\triangleleft}{\triangleleft} H$, we have $\langle x \rangle {\triangleleft}{\triangleleft} H \cap \langle x, h \rangle = \langle x, h \rangle$. Consequently, $h \in S_G(x)$. Since $h$ is arbitrary, $H \subseteq S_G(x)$. For the second part, notice how $\langle x \rangle {\triangleleft}{\triangleleft} Q \unlhd \NB_G(Q)$.
		\item From the definition, it is immediate that $S_H(x) = S_G(x) \cap H$. In particular, $\Sub_H(x) \leq \Sub_G(x)$. On the other hand, if $\langle x \rangle {\triangleleft}{\triangleleft} \Sub_G(x)$, then $\Sub_G(x) = S_G(x)$ and $\Sub_H(x) = S_H(x)$. Then, the equality follows from $S_H(x) = S_G(x) \cap H$.
		\item This follows from \cite[Lemma 2.3]{CasoloSubnormalizers}.
		\item This follows from \cite[Lemma 1.2]{CasoloSubnormalizersSubgroups}.
		\item $\Sub_G(x) = S_G(x)$ if and only if $S_G(x)$ is a subgroup of $G$. By \cite[Proposition 2.1 (b)]{CasoloSubnormalizers}, $S_G(x) = S_G(Q)$. And, by the proof \cite[Proposition 1.4]{CasoloSubnormalizersSubgroups}, if $S_G(Q)$ is a subgroup, then $S_G(Q) = \NB_G(Q)$. The other implication is clear.
		\item We have that $\langle x \rangle {\triangleleft}{\triangleleft} \langle x, a \rangle$ if and only if $\langle x^y \rangle {\triangleleft}{\triangleleft} \langle x^y, a^y \rangle$. Thus, $a \in S_G(x)$ if and only if $a^y \in S_G(x^y)$. The result follows.
	\end{enumerate}
\end{proof}

We remark that second part of item (ii) above does not hold in general without assuming $\langle x \rangle {\triangleleft}{\triangleleft} \Sub_G(x)$. For example, using GAP \cite{GAP} notation, SmallGroup(216, 158) has a $2$-element $x$ of order $2$ which is contained in a subgroup $H$ isomorphic to $S_3$, meaning $\Sub_H(x) = \langle x \rangle$. On the other hand, $H$ is contained in $\Sub_G(x)$, so that $\Sub_G(x) \cap H = H \neq \Sub_H(x)$.

For what follows, we need an elementary property of the characters $\Irr^x(G)$. This is proved in \cite[Lemma 4.3]{PickyPSolvable}, but we separate it here for the reader's convenience.

\begin{lem}
	\label{nonVanishingInvarConst}
	Let $x \in G$, let $\chi \in \Irr^x(G)$ and let $N \unlhd G$. Then, there exists an $\langle x \rangle$-invariant irreducible constituent of $\chi_N$ and, if $\psi$ is the Clifford correspondent of $\chi$ over $\theta$, $\psi(x) \neq 0$.
\end{lem}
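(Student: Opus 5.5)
We will use Clifford theory together with the character formula for induced characters; the key is to choose the constituent through the Clifford correspondent.

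Let $\theta$ be any irreducible constituent of $\chi_N$, let $T = G_\theta$ be its inertia group, and let $\psi \in \Irr(T \mid \theta)$ be the Clifford correspondent of $\chi$, so that $\psi^G = \chi$. By the formula for induced characters,
\begin{equation*}
\chi(x) = \frac{1}{|T|}\sum_{g \in G,\ gxg^{-1} \in T} \psi(gxg^{-1}).
\end{equation*}
Since $\chi(x) \neq 0$, some term is nonzero, so there is $g \in G$ with $x^{g^{-1}} := gxg^{-1} \in T$ and $\psi(gxg^{-1}) \neq 0$.

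We now conjugate everything by this $g$. Replace $\theta$ by $\theta' = \theta^{g^{-1}}$, another constituent of $\chi_N$. Its inertia group is $T' = T^{g^{-1}}$, and $\psi' = \psi^{g^{-1}}$ is the Clifford correspondent of $\chi$ over $\theta'$, because conjugation preserves the Clifford correspondence and $\psi'^G = \chi$. The element $x$ now lies in $T'$ and $\psi'(x) = \psi(gxg^{-1}) \neq 0$ (with the orientation of the conjugation chosen accordingly). Since $x \in T' = G_{\theta'}$, the character $\theta'$ is $\langle x \rangle$-invariant. This gives the first claim and the second simultaneously, for this particular choice $\theta'$.

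For the second claim, read with $\theta$ an arbitrary $\langle x\rangle$-invariant constituent, we must check that the nonvanishing does not depend on which such $\theta$ is chosen. Suppose $\theta$ is $\langle x\rangle$-invariant, so $x \in T$. Using the induction formula with $T$-double cosets, $\chi(x)$ is a sum of values $\psi(x^{g})$ over $g$ with $x^g \in T$. I would argue as follows: if $x^g \in T$ for some $g$, then $x$ fixes $\theta^{g^{-1}}$. The $\langle x\rangle$-invariant constituents of $\chi_N$ are then permuted transitively by $\CB_G(x)$, or at least the relevant nonzero contributions all reduce to values $\psi(x)$ up to conjugation.

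This independence step is the main obstacle, since it is not automatic that $\CB_G(x)$ acts transitively on the $x$-invariant constituents. If it fails, the statement should be read as "there exists an $\langle x\rangle$-invariant $\theta$ whose Clifford correspondent $\psi$ satisfies $\psi(x)\neq 0$." That version is exactly what the conjugation argument above proves, and it is presumably how the lemma is applied (cf. \cite[Lemma 4.3]{PickyPSolvable}).
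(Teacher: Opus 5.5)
Your conjugation argument is the paper's proof, and the existential reading you settle on is exactly what the paper establishes. Fix the direction of conjugation, though: from $gxg^{-1}\in G_\theta$ one gets $x\in G_{\theta^g}$ and $\psi^g(x)=\psi(gxg^{-1})\neq 0$, so the replacement is $\theta^g,\psi^g$ rather than $\theta^{g^{-1}},\psi^{g^{-1}}$.

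You can drop your last paragraph, because the ``every $\langle x\rangle$-invariant $\theta$'' reading is false in general. Take $S_3\wr C_2$ with $N=S_3\times S_3$, $\theta=\sigma\times 1_{S_3}$ ($\sigma$ the degree-$2$ character) and $x=(t,1)$ for a transposition $t$. Then $\chi=\theta^G$ has $\chi(x)=2$, while the Clifford correspondent over the $x$-invariant $\theta$ is $\theta$ itself and $\theta(x)=0$.
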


\begin{proof}
	Let $\theta \in \Irr(N)$ be any irreducible constituent of $\chi_N$ and let $\psi$ be the Clifford correspondent of $\chi$ over $\theta$. Since $\chi(x) \neq 0$, we have
	\begin{equation*}
		0 \neq \chi(x) = \psi^G(x) = \frac{1}{|G_\theta|} \sum_{g \in G} \psi^\circ (gxg^{-1}),
	\end{equation*}
	by the Clifford correspondence (here, $\psi^\circ(y) = \psi(y)$ if $y \in G_\theta$ and $\psi^\circ(y) = 0$ otherwise). Thus, there exists $g \in G$ such that $gxg^{-1} \in G_\theta$ and $\psi^g(x) \neq 0$. Then, $x \in G_{\theta^g}$, meaning $\theta^g$ is an $\langle x \rangle$-invariant irreducible constituent of $\chi_N$ and the Clifford correspondent of $\chi$ over $\theta^g$ is easily seen to be $\psi^g$.
\end{proof}

We will also make extensive use of another result from \cite{PickyPSolvable}, which relates to coprime action. For the reader's convenience, we include the statement below.

\begin{lem}
	\label{ElementaryCoprimeActionLemma}
	Suppose that $A$ acts by automorphisms on a finite group $G$, let $N \unlhd G$ be $A$-invariant with $(|G:N|, |A|) = 1$ and let $C/N = \CB_{G/N}(A)$.
	\begin{enumerate}
		\item If $\chi \in \Irr(G)$ is $A$-invariant, then $\chi_N$ has an $A$-invariant irreducible constituent, $\theta$, and the set of such constituents is $\{\theta^c \mid c \in C\}$. In particular, if $C = N$, $\theta$ is unique;
		\item If $\theta$ is $A$-invariant, then $\theta^G$ has an $A$-invariant irreducible constituent $\chi$. Furthermore, $\chi$ is unique if $C = N$;
		\item Let $\chi \in \Irr(G)$ and let $\theta \in \Irr(N)$ lie under $\chi$. If $C = G$, then $\chi$ is $A$-invariant if and only if $\theta$ is $A$-invariant.
	\end{enumerate}
\end{lem}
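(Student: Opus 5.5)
The plan is to prove the three parts in the order (iii), (i), (ii), using only Clifford theory and the Glauberman lemma. The Glauberman lemma says: when $A$ acts coprimely on a set $\Omega$ with a compatible transitive action of an $A$-group $K$, there are $A$-fixed points, and any two of them are conjugate by an element of $\CB_K(A)$. First we form the semidirect product $GA$, so that $A$-invariance of a character is stability under the action of $A$. Since $(|G:N|,|A|)=1$, the action of $A$ on $G/N$ is coprime.

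For (iii), assume $C=G$, so $A$ acts trivially on $G/N$. This means $[G,A]\subseteq N$, and hence $(\theta^g)^a=(\theta^a)^{g'}$ with $g'\equiv g \pmod N$. If $\theta$ is $A$-invariant, then $\chi^a$ lies over $\theta^a=\theta$. I would then argue in $GA$, with the orbits of $A$ acting on the set $\Irr(G\mid\theta)$. We have $\Irr(G\mid \theta)\leftrightarrow \Irr(G_\theta\mid\theta)$, and $A$ fixes $G_\theta$ because $G_\theta\supseteq N$ and $A$ centralizes $G/N$. I expect this to be the main obstacle: showing that $A$ fixes each character over $\theta$. The first approach to try is the following. $A$ acts on $GA/N$, where $A$ commutes with $G/N$. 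So $GA/N\cong G/N\times A$ modulo the image of $A\cap N$, and the characters of $G$ over $\theta$ extend or induce compatibly. Concretely, $\chi^a=\chi$ because $\chi^a(g)=\chi(g^a)$ and $g^a=gn$. That identity alone does not give invariance, so the better route is Gallagher-type counting: $A$ acts on the fibre $\Irr(G\mid\theta)$, and this action commutes with multiplication by $\Irr(G/N)$, on which $A$ acts trivially. If that fails, I would fall back on the coprimality of $|A|$ and $|G:N|$ through the character triple isomorphism theory of Isaacs, which shows that the action of $A$ on $\Irr(G\mid\theta)$ is trivial when $A$ centralizes $G/N$. The converse direction works as follows. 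If $\chi$ is $A$-invariant, then $A$ permutes the $G$-orbit of constituents $\theta^g$. By the Glauberman lemma, some $\theta^{g}$ is $A$-fixed. Since $A$ acts trivially on $G/N$, we get $\theta^a=\theta^{g^{-1}\cdot g^a\ldots}$, which is again conjugate by an element of $N$, hence equal to $\theta$.

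For (i), $A$ permutes the constituents of $\chi_N$, which form a single $G$-orbit. We apply the Glauberman lemma with $K=G/N$ acting on $\Omega=\{\theta^g\}$; the stabilizers contain $N$, so $K$ is a genuine $A$-group of order coprime to $|A|$. This gives an $A$-fixed $\theta$, and shows that all $A$-fixed constituents are $\CB_{G/N}(A)$-conjugates, that is, of the form $\theta^c$ with $c\in C$. If $C=N$, this forces $\theta$ to be unique.

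For (ii), $A$ fixes $\theta$ and therefore stabilizes $G_\theta$ and acts on $\Irr(G\mid\theta)$. We count $A$-fixed points coprimely. Using the Clifford correspondence, we reduce to $G_\theta$. There, we use that $\sum_{\chi}e_\chi\chi(1)=|G_\theta:N|\theta(1)$; this sum is coprime to $|A|$ after dividing by the $A$-fixed part. The orbit-length argument, which shows that the sum over non-fixed orbits is divisible by $p$-parts, then yields a fixed $\chi$. For uniqueness when $C=N$, take two invariant $\chi_1,\chi_2$ over $\theta$. The constituents of $(\chi_i)_{G_\theta}$ over $\theta$ that are $A$-fixed are unique by (i) applied to $G_\theta$, and the converse application gives $\chi_1=\chi_2$ via the Glauberman correspondence for the action of $A$ on $G/N$.
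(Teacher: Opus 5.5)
The paper gives no argument for this lemma; it quotes Lemma~2.1 of the earlier paper on $p$-solvable groups, which packages standard coprime-action results. Judged as a self-contained proof, your proposal has genuine gaps. The parts that are right are (i) and the implication ``$\chi$ $A$-invariant $\Rightarrow$ $\theta$ $A$-invariant'' in (iii). For (i), Glauberman's lemma applies to the compatible transitive action of $G/N$ on the constituents of $\chi_N$; for arbitrary $A$ its solvability hypothesis comes from Feit--Thompson. The implication in (iii) is just (i) with $C=G$.

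The converse implication in (iii), which you rightly call the main obstacle, is never proved. The identity $g^a\in gN$ gives nothing, as you say. The Gallagher idea settles only the case where $\theta$ extends to $G_\theta$: then $\hat\theta^a=\hat\theta\lambda_a$, and $a\mapsto\lambda_a$ is a homomorphism from $A$ into the linear characters of $G_\theta/N$, hence trivial. But $\theta$ need not extend, and your final fallback just restates the claim. The missing idea is as follows. After reducing to $G_\theta=G$, $\theta$ is invariant in $\Gamma=G\rtimes A$, and $U=NA$ is normal in $\Gamma$ with $\Gamma/N=G/N\times U/N$. Pass to a strongly isomorphic triple $(\Gamma^*,N^*,\theta^*)$ with $N^*$ central. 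For $g\in G^*$ and $u\in U^*$ we have $[g,u]\in G^*\cap U^*=N^*$. Centrality makes $[g,u]$ multiplicative in each variable and trivial when either variable lies in $N^*$. So $[g,u]^{|G:N|}=1=[g,u]^{|A|}$, whence $[G^*,U^*]=1$. Thus $U^*$ fixes every character of $G^*$, and by compatibility with conjugation, $A$ fixes every member of $\Irr(G\mid\theta)$.

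In (ii), your orbit count does produce an $A$-invariant character when $A$ is a $p$-group, which is the case relevant to this paper. The count uses $\sum e_\psi^2=|G_\theta:N|$ over $\psi\in\Irr(G_\theta\mid\theta)$, with $e_\psi$ constant on $A$-orbits. For general $A$ with $(|G:N|,|A|)=1$, however, the lengths of non-trivial orbits need not share a prime (lengths $2$ and $3$ when $|A|=6$). So the congruence yields nothing, and ``coprime to $|A|$ after dividing by the $A$-fixed part'' does not repair this.

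The uniqueness when $C=N$ is not established at all. Part (i) concerns restriction to $N$, not to the possibly non-normal $G_\theta$. That two $A$-invariant $\chi_1,\chi_2$ over $\theta$ have $A$-invariant Clifford correspondents says nothing about $\chi_1=\chi_2$. A Glauberman correspondence for $A$ acting on $G/N$ concerns $\Irr(G/N)$, not $\Irr(G\mid\theta)$. And no orbit count modulo a prime can give an exact uniqueness statement.

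Here is a route that works:
\begin{itemize}
\item Reduce to $\theta$ $G$-invariant, and pass as above to a strongly isomorphic triple with $N^*$ central.
\item Write $N^*=Z_\pi\times Z_{\pi'}$, with $\pi$ the set of primes dividing $|A|$, so that $G^*=Z_\pi\times H$ with $H$ the normal Hall $\pi'$-subgroup.
\item By coprime action, $C=N$ becomes $\CB_H(A)=Z_{\pi'}$. By Glauberman's lemma, $H$ then has exactly $|Z_{\pi'}|$ $A$-invariant classes.
\item Fixed characters and fixed classes are equinumerous under coprime action, so $H$ has exactly $|Z_{\pi'}|$ $A$-invariant irreducible characters.
\item Each linear character of the central subgroup $Z_{\pi'}$ lies under at least one of them, by existence; hence it lies under exactly one.
\end{itemize}
The same reduction, combined with the Glauberman--Isaacs correspondence for $A$ acting on $H$, also gives existence for arbitrary $A$.
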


\begin{proof}
	See \cite[Lemma 2.1]{PickyPSolvable}.
\end{proof}

Finally, we will need a well-known result on the existence of a specific character triple isomorphism, known as the \emph{Dade-Turull character correspondence}. In fact, we will need a specific property of this correspondence that has to do with canonical-extensions.

\begin{lem}
	\label{TurullLemma}
	Let $p$ be an odd prime, let $N \unlhd G$ be a subgroup of $p'$-order and suppose $P$ is a $p$-subgroup of $G$ such that $NP \unlhd G$. Let $\theta \in \Irr_P(N)$ and let $\theta^*$ be its $P$-Glauberman correspondent. Then, there exists a character triple isomorphism $*$ from $(G, N, \theta)$ to $(\NB_G(P), \CB_N(P), \theta^*)$ associated to the canonical isomorphism $G/N \cong \NB_G(P)/\CB_N(P)$. This isomorphism $*$ has the following property: if $Q$ is a $p$-subgroup of $G$ such that $\theta$ is $Q$-invariant and $\NB_G(P) = \NB_G(Q)$, then $\theta^*$ is also the $Q$-Glauberman correspondent of $\theta$ and, if $\tau_P$, $\nu_P$ (resp. $\tau_Q, \nu_Q$) denote the canonical extensions of $\theta, \theta^*$ to $NP, \CB_N(P)P$ (resp. $NQ, \CB_N(Q)Q$) respectively, then $\tau_P^* = \nu_P$ and $\tau_Q^* = \nu_Q$.
\end{lem}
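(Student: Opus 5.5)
This lemma is essentially the Dade--Turull correspondence, so the plan is to import it rather than rebuild it. I would quote Turull's theorem on the above-Glauberman character triple isomorphism: for $p$ odd, $N$ a normal $p'$-subgroup, $P$ a $p$-subgroup with $NP \unlhd G$, and $\theta \in \Irr_P(N)$, there is a canonical isomorphism $(G,N,\theta) \to (\NB_G(P), \CB_N(P), \theta^*)$. Here $\theta^*$ is the $P$-Glauberman correspondent. By the Frattini argument, $G = N\NB_G(P)$ and $\NB_G(P)\cap N = \CB_N(P)$, so $G/N \cong \NB_G(P)/\CB_N(P)$ canonically. Turull's isomorphism is defined through the canonical extensions of $\theta$ to $NP$ and of $\theta^*$ to $\CB_N(P)P$. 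These are the unique extensions with determinantal order coprime to $p$; they exist and are unique because $p$ is odd and $(|N|,p)=1$. The isomorphism is built so that it sends the canonical extension $\tau_P$ to $\nu_P$. This gives the first half of the final assertion, $\tau_P^* = \nu_P$.

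For the property involving $Q$, I would first show that $\theta^*$ is also the $Q$-Glauberman correspondent. The condition $\NB_G(P)=\NB_G(Q)$ forces $P \le \NB_G(Q)$ and $Q \le \NB_G(P)$. Both are $p$-subgroups normalizing each other, so $PQ$ is a $p$-group and $NPQ$ is defined.

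Next I would show $\CB_N(P) = \CB_N(Q)$. Since $NP \unlhd G$, we get $\NB_N(P) = N\cap\NB_G(P) = N \cap \NB_G(Q) = \NB_N(Q)$. For a $p'$-group $N$ acted on by the $p$-group $P$, $\NB_N(P) = \CB_N(P)$, because $[\NB_N(P),P] \le N\cap P = 1$. The same argument applies to $Q$ once we know $NQ$ is a group. For that I would show $Q$ normalizes $N$: $Q$ normalizes $NP$, whose normal Hall $p'$-subgroup is $N$, and $N$ is characteristic in $NP$. So $\CB_N(P)=\CB_N(Q)=:C$.

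Then I would apply Glauberman's theory for the $p$-group $PQ$. Using the restriction/transitivity property of the Glauberman correspondence, both $\pi_P(\theta)$ and $\pi_Q(\theta)$ are the unique constituent of $\theta_C$ occurring with multiplicity prime to $p$. More precisely, $\pi_P(\theta)$ is characterised as the unique $P$-invariant constituent of $\theta_{C}$ with $p\nmid[\theta_C,\pi_P(\theta)]$. Since $C$ is the same for both, the two correspondents are characterised inside the same restriction $\theta_C$. To make this airtight I would pass to $\CB_N(PQ)$: $PQ$ acts on $N$ with $\CB_N(PQ) \le C$. Moreover $Q \le \NB_G(P)$ acts on $C$, and I expect the action of $Q$ on $C=\CB_N(P)$ to be trivial modulo this setup. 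The conclusion should then follow from the Glauberman correspondence for the group $PQ$ through $P$ and through $Q$.

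Finally, I would show $\tau_Q^* = \nu_Q$. The idea is to apply the whole theorem with $Q$ in place of $P$. $NQ$ is normal in $G$ because $\NB_G(Q)=\NB_G(P)$ together with the Frattini argument gives $NQ = N\NB_{NQ}(Q)$, and it is normalized by $N\NB_G(Q)=G$. The harder part is that the problem is about the specific map $*$ built from $P$, not a separate one built from $Q$. I would use Turull's uniqueness/canonicity: the isomorphism for $P$ and the one for $Q$ should both agree with the one for $PQ$. I would check this by restricting to $NPQ$, where the canonical extension of $\theta$ to $NPQ$ restricts to both $\tau_P$ and $\tau_Q$, and the same holds on the local side.

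The main obstacle is this compatibility between the $P$- and $Q$-built isomorphisms. I would handle it by invoking Turull's result that the correspondence depends only on the normalizer data and on the canonical extensions. If that fails, the fallback is to work directly in $NPQ$.
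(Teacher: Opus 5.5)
You handle the $Q$-Glauberman correspondent correctly, though more elaborately than needed. Since $\theta_{\CB_N(P)} = e\theta^* + p\Delta$, the character $\theta^*$ is the unique constituent of $\theta_{\CB_N(P)}$ with multiplicity prime to $p$. So $\CB_N(P)=\CB_N(Q)$, which you derive correctly, already gives $\pi_P(\theta)=\pi_Q(\theta)$. The detour through $\CB_N(PQ)$ is unnecessary, and so is proving that $Q$ normalizes $N$, because $N\unlhd G$ is a hypothesis.

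The genuine gap is in the two equalities $\tau_P^*=\nu_P$ and $\tau_Q^*=\nu_Q$, which are the real content of the lemma.

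\textbf{The equality for $P$.} You assert that Turull's isomorphism is ``built so that'' it sends $\tau_P$ to $\nu_P$. Nothing you quote or prove supplies this, so as written it is an assumption rather than an argument.

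\textbf{The equality for $Q$.} You name the obstacle but leave it unresolved, and your fallback through $NPQ$ does not close it. Note that $\CB_N(P)PQ=\CB_N(P)\times PQ$. Even granting $\tau_P^*=\nu_P$, compatibility with restriction only yields $\tau_{PQ}^*=\theta^*\times\mu$ for some linear $\mu\in\Irr(PQ/P)$. Hence $\tau_Q^*=\theta^*\times\mu_Q$, and $\mu_Q$ could a priori be nontrivial.

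\textbf{The missing idea.} You need some property of the single map $*$ that detects canonical extensions. The paper uses the fact that Turull's bijections preserve fields of values over $\mathbf{Q}_p$ at every intermediate subgroup $N\le J\le G$. The argument then runs as follows.
\begin{enumerate}
\item Canonical extensions have the same field of values as $\theta$, so $\tau_P$ and $\tau_Q$ are $p$-rational. Thus $\mathbf{Q}_p(\tau_P)$ is unramified and contains no nontrivial $p$-power roots of unity.
\item On the local side, $\Irr(\CB_N(P)P\mid\theta^*)=\{\theta^*\times\lambda \mid \lambda\in\Irr(P)\}$.
\item Because $p$ is odd, every $\lambda\neq 1_P$ satisfies $\Q(\lambda)\neq\Q$ with $\Q(\lambda)$ inside a $p$-power cyclotomic field. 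So $\mathbf{Q}_p(\theta^*\times\lambda)\supseteq\mathbf{Q}_p(\lambda)$ is ramified.
\item Therefore $\nu_P=\theta^*\times 1_P$ is the only candidate compatible with $\mathbf{Q}_p(\tau_P^*)=\mathbf{Q}_p(\tau_P)$, giving $\tau_P^*=\nu_P$.
\end{enumerate}
Apply the same argument to $J=NQ$. Its image under the canonical isomorphism is $NQ\cap\NB_G(P)=NQ\cap\NB_G(Q)=\CB_N(Q)Q=\CB_N(Q)\times Q$, and this gives $\tau_Q^*=\nu_Q$ directly. With this route you never have to compare a $P$-built isomorphism with a $Q$-built one.
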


\begin{proof}
	By \cite[Lemma 7.3]{Turull} and \cite[Theorem 7.12]{Turull2}, for each subgroup $N \leq H \leq G$, there exists a map $*$ from $\Irr(H \mid \theta)$ to $\Irr(\NB_H(P) \mid \theta^*)$ satisfying an ample number of conditions (conditions (1) through (10) of \cite[Theorem 7.12]{Turull2} and conditions (1) through (5) of \cite[Lemma 7.3]{Turull}). In particular, by definition (\cite[Definition 11.23]{CTFG}, for instance), it, together with the canonical isomorphism $G/N \to \NB_G(P)/\CB_N(P)$, defines a character triple isomorphism.
	
	Since $\NB_G(P) = \NB_G(Q)$, we have $\CB_N(P) = \CB_N(Q)$. By the definition of the Glauberman correspondents, then, it follows that $\theta^*$ is the $Q$-Glauberman correspondent of $\theta$ as well. Now, By \cite[Corollary 6.4]{CTMcKC}, canonical extensions preserve fields of values. Since $N, \CB_N(P)$ are groups of $p'$-order, then, $\tau_P, \tau_Q, \nu_P, \nu_Q$ are all $p$-rational. Since $\CB_N(P)P = \CB_N(P) \times P$ and $p$ is odd, meaning the only $p$-rational character of $P$ is $1_P$, it follows that $\nu_P = \theta^* \times 1_P$ is the only $p$-rational character in $\Irr(\CB_N(P)P \mid \theta^*)$ (an analogous result holds for $Q$). 
	
	Now, by one of the properties of $*$ listed in \cite[Lemma 7.3]{Turull}, $\mathbf{Q}_p(\tau_P) = \mathbf{Q}_p(\tau_P^*)$ and $\mathbf{Q}_p(\tau_Q) = \mathbf{Q}_p(\tau_Q^*)$, where $\mathbf{Q}_p$ denotes the $p$-adic numbers. It is a well-known fact (see, for instance \cite[Exercise II.3.3]{Janusz}) that the only roots of unity in $\mathbf{Q}_p$ for $p \neq 2$ are the $(p-1)$-roots of unity. Since $\tau_P$ is $p$-rational, $\mathbf{Q}_p(\tau_P)$ does not contain any $p$-power root of unity, and so neither does $\mathbf{Q}_p(\tau_P^*)$ (and analogously for $Q$). Then, $\tau_P^*$ is a $p$-rational character in $\Irr(\CB_N(P)P \mid \theta^*)$. By what we saw in the previous paragraph, the only such character is $\nu_P$. The same is true for $Q$.
\end{proof}

\section{On the Glauberman correspondence}

We begin this section by recalling a fundamental correspondence for local representation theory: the \emph{relative Glauberman correspondence}. The following version of it is the one in \cite{PickyPSolvable}. Just as in that paper, if $A$ acts by automorphisms on a finite group $G$, we write $\Irr_A(G)$ for the irreducible characters of $G$ which are fixed by the action of $A$.

\begin{thm}[Relative Glauberman correspondence]
	\label{RelativeGlauberman}
	Suppose that $A$ is a $p$-group acting on a finite group $G$ by automorphisms, that $N \unlhd G$ is $A$-invariant and that $G/N$ has order not divisible by $p$. Let $C/N = \CB_{G/N}(A)$. Then, there is a natural bijection $^*: \Irr_A(G) \to \Irr_A(C)$. In fact, if $\chi \in \Irr_A(G)$, then
	\begin{equation*}
		\chi_C = e\chi^* + p\Delta + \Xi,
	\end{equation*}
	where $e \equiv \pm 1 \pmod{p}$, $\Delta$ and $\Xi$ are characters of $C$ (or zero) and no irreducible constituent of $\Xi$ lies over some $A$-invariant character of $N$. Furthermore, if $\tau \in \Irr_A(C)$, then we can write
	\begin{equation*}
		\tau^G = d \mu + p\Psi + \rho,
	\end{equation*}
	where $\mu \in \Irr_A(G)$ is such that $\mu^* = \tau$, $d \equiv \pm 1 \pmod{p}$, $\Psi$ and $\rho$ are characters of $G$ (or zero) and no irreducible constituent of $\rho$ is $A$-invariant.
\end{thm}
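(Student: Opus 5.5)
Following the approach of \cite{PickyPSolvable}, I would prove the theorem by working in the semidirect product $\Gamma = G \rtimes A$. Note that $p \nmid |G:N|$, so $N$ and $A$ play the roles of the kernel and the acting coprime group, and $C$ is $A$-invariant because $N$ is. For the first decomposition I would start from the $A$-invariant irreducible constituents of $\chi_N$. By \Cref{ElementaryCoprimeActionLemma}(i), applied to the $p$-group $A$ acting on $G$ with $(|G:N|,|A|)=1$, these exist and form a single $C$-orbit $\{\theta^c \mid c \in C\}$. Every irreducible constituent of $\chi_C$ lies over some constituent of $\chi_N$, all of which are $G$-conjugate to $\theta$. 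The constituents lying over non-$A$-invariant characters of $N$ are collected into $\Xi$. What remains is the part of $\chi_C$ lying over the $C$-orbit of $\theta$, that is, $(\chi_C)$ projected onto $\Irr(C \mid \theta)$.

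Next I would count modulo $p$. Write the relevant part of $\chi_C$ as $\sum_{\psi} a_\psi \psi$ over $\psi \in \Irr(C\mid\theta)$. The group $A$ permutes these $\psi$ and fixes the coefficients $a_\psi$, since $\chi$ is $A$-invariant. Every nontrivial $A$-orbit has length divisible by $p$, so these terms can be absorbed into $p\Delta$. It then remains to show two things about the $A$-invariant terms: that exactly one $A$-invariant $\psi$ has multiplicity $a_\psi \not\equiv 0 \pmod p$, and that this multiplicity satisfies $a_\psi \equiv \pm 1 \pmod p$. I would prove this by reducing to the classical Glauberman situation via Clifford theory and character triples. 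Replace $\theta$ by $\theta$ extended to $N \rtimes A$ (possible since $\theta$ is $A$-invariant and $\gcd(|N|,|A|)$ need not be $1$, so extendibility would instead come from the canonical-extension and Gallagher approach used in \cite{PickyPSolvable}). Then pass to the stabilizers $G_\theta$ and $C_\theta$, and use that $G_\theta/N$ is a $p'$-group on which $A$ acts coprimely with fixed points $C_\theta/N$. The classical Glauberman/Isaacs correspondence for $A$ acting on $G_\theta/N$, applied through a projective-representation (character triple) argument, gives the congruence $\pm1$ and the uniqueness. Defining $\chi^*$ as this unique $\psi$ yields the map.

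For bijectivity and the induction formula I would use Frobenius reciprocity. Take $\tau \in \Irr_A(C)$ and decompose $\tau^G$. The multiplicity of an $A$-invariant $\mu$ in $\tau^G$ equals $[\mu_C,\tau]$, which by the first part is $\equiv \pm1$ if $\mu^*=\tau$ and $\equiv 0 \pmod p$ otherwise, apart from the $\Xi$-terms. Those terms do not involve $\tau$, because $\tau$ lies over an $A$-invariant character of $N$: by \Cref{ElementaryCoprimeActionLemma}(iii) applied to $C$, where $\CB_{C/N}(A) = C/N$, $\tau$ lies over $A$-invariant characters. The non-invariant constituents of $\tau^G$ go into $\rho$. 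It remains to show that some $A$-invariant $\mu$ has $\mu^* = \tau$. Here I would use a counting or degree argument: $\tau^G(1) = |G:C|\tau(1)$, and $|G:C| \equiv |G/N : C/N| \equiv 1 \pmod p$ by coprime action, since $A$ fixes exactly $C/N$ and the other orbits have length divisible by $p$. Hence the $A$-invariant part of $\tau^G$ cannot vanish mod $p$, which gives surjectivity. Injectivity follows because $\mu^*$ determines $\mu$ as the unique invariant constituent of $\tau^G$ with multiplicity prime to $p$. Naturality, i.e.\ compatibility with automorphisms normalizing $A$, is automatic since the construction is canonical.

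I expect the main obstacle to be the uniqueness and the $\pm1$ congruence in the second paragraph, namely the reduction through $\theta$ to a genuine coprime Glauberman situation when $p$ may divide $|N|$. I would first try to cite \cite[Theorem]{PickyPSolvable} or Navarro's relative correspondence directly, and fall back on the character-triple reduction if needed.
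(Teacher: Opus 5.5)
The paper gives no argument of its own here: the statement is quoted from \cite[Theorem 2.2]{PickyPSolvable}, so your fallback of citing it is exactly the paper's route. Read as a self-contained proof, though, your sketch has a genuine gap at its core.

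The key claim is that among the constituents of $\chi_C$ in $\Irr(C\mid\theta)$ exactly one has multiplicity prime to $p$, and that this multiplicity is $\equiv\pm1$. You never establish this, and the device you propose cannot work. If $p$ divides $|N|$, an $A$-invariant $\theta$ need not extend to $N\rtimes A$. For example, a faithful character of degree $p$ of an extraspecial group $N$ of order $p^3$ is invariant under $A=N/\ZB(N)$ acting by conjugation, yet it does not extend to $N\rtimes A$. Canonical extensions also need coprimality, so they are unavailable too.

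What works is the scheme the paper itself runs in the proof of \Cref{relativeCorrespondence}:
\begin{enumerate}
\item Let $T=G_\theta$ and let $\psi$ be the Clifford correspondent. Mackey gives $\chi_C=(\psi_{C\cap T})^C+\delta$, where $\delta$ lies over non-$A$-invariant characters of $N$. This reduces to the case where $\theta$ is $G$-invariant.
\item Take a character triple isomorphism $(G\rtimes A,N,\theta)\to(\Gamma^\dagger,N^\dagger,\theta^\dagger)$ with $N^\dagger$ central.
\item Replace $A$ by the unique Sylow $p$-subgroup $A^\dagger$ of $(NA)^\dagger$, and write $G^\dagger=X\times N^\dagger_p$.
\item Apply \cite[Theorem 13.14]{CTFG} to $A^\dagger$ acting on the $p'$-group $X$. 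Note that $X$ is a central extension of $G_\theta/N$, not $G_\theta/N$ itself, and the acting group is $A^\dagger$, not $A$.
\end{enumerate}

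Your bijectivity step also fails as written.

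For surjectivity, $|G:C|\equiv 1\pmod p$ shows the $A$-invariant part of $\tau^G$ is nonzero mod $p$ only when $p\nmid\tau(1)$. Here $p$ can divide $\theta(1)$ and hence $\tau(1)$; take $A$ acting trivially on an extraspecial factor $N$ of $G=N\times X$. Then $\tau^G(1)\equiv 0$ and the count says nothing. This is repairable by dividing by $\theta(1)$. Every constituent $\mu$ of $\tau^G$ lies over a $G$-conjugate of $\theta$, so $\mu(1)/\theta(1)$ is an integer. Meanwhile $\tau^G(1)/\theta(1)=|G:C|\,\tau(1)/\theta(1)$ is prime to $p$.

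Injectivity, however, is circular. Your first part shows that \emph{every} $A$-invariant $\mu$ with $\mu^*=\tau$ occurs in $\tau^G$ with multiplicity prime to $p$. So ``the unique such constituent'' is exactly what needs proof. The usual counting tools, such as Glauberman's lemma on fixed classes, require $A$ to act coprimely on $G$, which fails here. Bijectivity must instead come out of the reduction above: the Clifford correspondence over $\theta$, plus the bijectivity of the classical correspondence on $X$. Once you have that, your Frobenius reciprocity derivation of $\tau^G=d\mu+p\Psi+\rho$ is fine, and it matches the paper's argument in \Cref{correspondence}.

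A minor point: absorbing the non-invariant $A$-orbits of $\Irr(C\mid\theta)$ into $p\Delta$ is not valid reasoning, since $a\sum_{\psi\in\mathcal{O}}\psi$ is not $p$ times a character unless $p\mid a$. It is, however, vacuous. By \Cref{ElementaryCoprimeActionLemma}(iii) applied to $C$, every member of $\Irr(C\mid\theta)$ is $A$-invariant.
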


\begin{proof}
	This is \cite[Theorem 2.2]{PickyPSolvable}.
\end{proof}

For our purposes, we will actually need a stronger version of both \Cref{RelativeGlauberman} and the (ordinary) Glauberman correspondence, and begin by proving the latter. Part of the result is stated in \cite[Problem 2.7]{CTMcKC}, but we prove everything for the sake of completeness.

\begin{thm}
	\label{correspondence}
	Let $p$ be a prime and let $G$ be a group of $p'$-order. Suppose $A$ is a $p$-group acting on $G$ by automorphisms and let $C = \CB_G(A)$ and $C \leq H \leq G$ an $A$-invariant subgroup of $G$. Then, there exists a natural bijection $f_A: \Irr_{A}(G) \to \Irr_{A}(H)$. In fact, if $\theta \in \Irr_{A}(G)$, then
	\begin{equation*}
		\theta_H = ef_A(\theta) + p\Delta + \Xi,
	\end{equation*}
	where $e \equiv \pm 1 \pmod{p}$ and $\Delta, \Xi$ are characters of $H$ (or zero) such that no constituent of $\Xi$ is $A$-invariant. Also, if $\tau \in \Irr_{A}(H)$, then 
	\begin{equation*}
		\tau^G = d\mu + p\Lambda + \rho,
	\end{equation*}
	where $f_A(\mu) = \tau$, $d \equiv \pm 1 \pmod{p}$ and $\Lambda, \rho$ are characters of $G$ (or zero) such that no constituent of $\rho$ is $A$-invariant. Moreover, writing $\varphi = f_A(\theta)$, then, if $*$ denotes the $A$-Glauberman correspondence, $\theta^* = \varphi^*$ and $[\theta_C, \theta^*] \equiv [\theta_H, \varphi][\varphi_C, \theta^*] \pmod{p}$.
\end{thm}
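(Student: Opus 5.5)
Since $C=\CB_G(A)\leq H$ and $H$ is $A$-invariant, the $A$-Glauberman correspondence is defined both for $G$ and for $H$, and $\CB_H(A)=C$. Write $*$ for either correspondence, $\Irr_A(G)\to\Irr(C)$ and $\Irr_A(H)\to\Irr(C)$. The plan is to \emph{define} $f_A$ as the composite bijection $\theta\mapsto$ the unique $\varphi\in\Irr_A(H)$ with $\varphi^*=\theta^*$. This gives naturality and $\theta^*=\varphi^*$ for free. What remains is the two decompositions and the congruence for $[\theta_C,\theta^*]$.

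The main tool is the standard congruence from the Glauberman correspondence. For $\theta\in\Irr_A(G)$ we have $\theta_C=e_\theta\theta^*+p\Delta_0$, with $e_\theta\equiv\pm1\pmod p$ and $\theta^*$ the unique constituent with multiplicity prime to $p$. This is the case $N=1$ of \Cref{RelativeGlauberman}, since $\Xi=0$ there because every character of the trivial group is invariant. The same holds for $H$. Write $\theta_H=\sum_{\eta\in\Irr(H)}a_\eta\eta$. Since $A$ is a $p$-group, the non-invariant $\eta$ split into $A$-orbits of size divisible by $p$, all with equal multiplicities. I will collect those into $\Xi$ and $p$-multiples.

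Restricting further to $C$ gives
\[
\theta_C=\sum_{\eta\in\Irr_A(H)}a_\eta\eta_C+\sum_{\eta\notin\Irr_A(H)}a_\eta\eta_C .
\]
The second sum is a sum over $A$-orbits; $A$-conjugate characters have equal restrictions to $C$ (as $A$ centralizes $C$), so this sum is $\equiv0\pmod p$ as a character. For $A$-invariant $\eta$, $\eta_C\equiv e_\eta\eta^*\pmod p$. Taking the multiplicity of $\theta^*$ modulo $p$ and using that $*$ is injective on $\Irr_A(H)$ gives
\[
e_\theta\equiv a_\varphi e_\varphi \pmod p,
\]
where $\varphi$ is the unique element of $\Irr_A(H)$ with $\varphi^*=\theta^*$. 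For any other $\eta\in\Irr_A(H)$, the multiplicity of $\eta^*$ gives $0\equiv a_\eta e_\eta$, hence $p\mid a_\eta$. Since $e_\theta\equiv\pm1$ and $e_\varphi\equiv\pm1$, we get $a_\varphi\equiv\pm1\pmod p$. So $\theta_H=e\varphi+p\Delta+\Xi$ with $e=a_\varphi$. The congruence $[\theta_C,\theta^*]\equiv[\theta_H,\varphi][\varphi_C,\theta^*]$ is exactly $e_\theta\equiv a_\varphi e_\varphi$.

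For induction, take $\tau\in\Irr_A(H)$ and $\mu=f_A^{-1}(\tau)$. By Frobenius reciprocity, $[\tau^G,\chi]=[\chi_H,\tau]$ for $\chi\in\Irr(G)$. For $\chi\in\Irr_A(G)$ the previous step shows this is $\equiv\pm1$ when $f_A(\chi)=\tau$, i.e.\ $\chi=\mu$, and $\equiv0$ otherwise. The non-invariant constituents of $\tau^G$ form $\rho$ (they come in $A$-orbits, as $\tau^G$ is $A$-invariant). This gives $\tau^G=d\mu+p\Lambda+\rho$.

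The main care point is the orbit argument. We need $A$-invariance of $\theta_H$ (true because $\theta$ and $H$ are $A$-invariant), and that restrictions to $C$ of $A$-conjugates coincide; with these the non-invariant parts vanish modulo $p$. Everything else is bookkeeping of multiplicities.
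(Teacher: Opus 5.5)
Your proposal is correct and follows essentially the same route as the paper. You define $f_A$ by composing the two Glauberman correspondences (using $\CB_H(A)=C$), then restrict $\theta_H$ to $C$, using that non-invariant constituents occur in $A$-orbits of $p$-power size with equal multiplicities and equal restrictions to $C$, and compare the multiplicity of $\theta^*$ modulo $p$; the induction statement then comes from Frobenius reciprocity. Your remark that the congruence $[\theta_C,\theta^*]\equiv[\theta_H,\varphi][\varphi_C,\theta^*]$ is just $e_\theta\equiv a_\varphi e_\varphi$ is exactly what the paper's relation $a_{i_0}m_{i_0}\equiv m \pmod{p}$ expresses.
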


\begin{proof}
	First, there is, indeed a bijection between $\Irr_{A}(G)$ and $\Irr_{A}(H)$ by the Glauberman correspondence applied twice, since $\CB_H(A) = \CB_G(A)$, as $\CB_G(A) \subseteq H$. That is to say, if $\theta \in \Irr_{A}(G)$, then there exists a unique $\varphi \in \Irr_{A}(H)$ such that $\theta^* = \varphi^*$. We will show that $\varphi$ is under the conditions outlined above.
	
	Write $\theta_H = \sum_{\eta \in \Irr(H)} a_\eta \eta$. Since $H$ is $A$-invariant, $A$ acts on $\Irr(H)$, so that we can write $\Irr(H) = \mathcal{O}_1 \sqcup \cdots \sqcup \mathcal{O}_k$ partitioned by $A$-orbits. Also, $\theta$ is $A$-invariant, meaning $[\theta_H, \eta] = [\theta_H, \eta^x]$ for all $x \in A$. Thus, the $a_\eta$ are, in fact, constant on each orbit. Let $\eta_i$ be a representative of $\mathcal{O}_i$ for each $i$ and let $a_i = [\theta_H, \eta_i]$. Then, $\theta_H = \sum_{i = 1}^k a_i \sum_{\eta \in \mathcal{O}_i} \eta$. Notice that, given $\eta \in \Irr(H)$, $(\eta^x)_{C} = \eta_{C}$ for all $x \in A$. Consequently, for all $\eta \in \mathcal{O}_i$, $\eta_{C} = {\eta_i}_{C}$.
	
	Write $\theta_{C} = m\theta^* + p\Delta'$ as in the $A$-Glauberman correspondence. Then, we have
	\begin{equation}
		\label{eq1}
		m\theta^* + p\Delta' = \theta_{C} = \sum_{i = 1}^k a_i \sum_{\eta \in \mathcal{O}_i} \eta_{C} = \sum_{i = 1}^k |\mathcal{O}_i| a_i {\eta_i}_{C}.
	\end{equation}
	From the fact that $m \not \equiv 0 \pmod{p}$, it follows that there is at least one of the $\mathcal{O}_i$ of size $1$ (since $A$ is a $p$-group, they are all powers of $p$). Without loss of generality, we may assume that orbits $1$ through $l$ are the ones of size $1$ (where $1 \leq l \leq k$). Then, \Cref{eq1} becomes
	\begin{equation*}
		m\theta^* + p\Delta' = \sum_{i = 1}^l a_i {\eta_i}_{C} + p \tilde{\Delta},
	\end{equation*}
	for some (possibly zero) character $\tilde{\Delta}$. For $1 \leq i \leq l$, we may write ${\eta_i}_{C} = m_i\eta_i^* + p\Delta_i$ as in the $A$-Glauberman correspondence. Then, we obtain
	\begin{equation}
		\label{eq2}
		m\theta^* + p\Delta' = \sum_{i = 1}^l a_i m_i\eta_i^*+ p \Delta'',
	\end{equation}
	where, again, $\Delta''$ is a character or $0$. Since the $\eta_i$ are pairwise distinct, so are the $\eta_i^*$. Thus, by \Cref{eq2}, there exists exactly one $i_0$ between $1$ and $l$ such that $\eta_{i_0}^* = \theta^*$ and we must also have $a_{i_0} m_{i_0} \equiv m \equiv \pm 1 \pmod{p}$. In particular, since $m_{i_0} \equiv \pm 1 \pmod{p}$, we have that $a_{i_0} \equiv \pm 1 \pmod{p}$. If $i \neq i_0$ is between $1$ and $l$, then $a_i \equiv 0 \pmod{p}$, again by \Cref{eq2}. And, if $l < i \leq k$, then $\eta_i$ is not $A$-invariant. This gives us the first expression in the statement of the theorem.
	
	For the other direction, let $\tau \in \Irr_{A}(H)$ and let $\mu \in \Irr_{A}(G)$ be such that $f_A(\mu) = \tau$. Then, we know that
	\begin{equation*}
		\mu_H = d\tau + p\Delta + \Xi
	\end{equation*}
	for some $d \equiv \pm 1 \pmod{p}$, and some characters $\Delta, \Xi$, the latter of which does not have $\tau$ as a constituent. Then,
	\begin{equation*}
		\tau^G = d\mu + p\Lambda + \rho
	\end{equation*}
	for some characters $\Lambda, \rho$ (or zero), by Frobenius reciprocity. We may assume that, for all constituents $\xi$ of $\rho$, $[\tau^G, \xi] \not \equiv 0 \pmod{p}$. Suppose there exists one such constituent $\xi$ which is $A$-invariant. Then,
	\begin{equation*}
		\xi_H = mf_A(\xi) + p\Phi + \Psi,
	\end{equation*}
	using the direction we already proved. Since $\xi \neq \mu$ and $f_A$ is bijective, $f_A(\xi) \neq \tau$. Also, $\tau$ is $A$-invariant. Then, by Frobenius reciprocity, since $0 \neq [\tau^G, \xi] = [\tau, \xi_H]$, we must have $[\tau, \xi_H] \equiv 0 \pmod{p}$, a contradiction.
\end{proof}

Now, we do the same but for the relative Glauberman correspondence. Of course, \Cref{relativeCorrespondence} generalizes \Cref{correspondence}, but we use \Cref{correspondence} in its proof.

\begin{thm}
	\label{relativeCorrespondence}
	Suppose that $A$ is a $p$-group acting on a finite group $G$ by automorphisms, that $N \unlhd G$ is $A$-invariant and that $G/N$ has order not divisible by $p$. Let $C/N = \CB_{G/N}(A)$ and let $C \leq H \leq G$ be an $A$-invariant subgroup of $G$. Then, there is a natural bijection $f_A: \Irr_A(G) \to \Irr_A(H)$. In fact, if $\chi \in \Irr_A(G)$, then
	\begin{equation*}
		\chi_H = ef_A(\chi) + p\Delta + \Xi,
	\end{equation*}
	where $e \equiv \pm 1 \pmod{p}$, $\Delta$ and $\Xi$ are characters of $H$ (or zero) and no irreducible constituent of $\Xi$ is $A$-invariant. Moreover, writing $\psi = f_A(\chi)$, if $*$ denotes the $A$-relative Glauberman correspondence, then $\chi^* = \psi^*$ and $[\chi_C, \chi^*] \equiv [\chi_H, \psi][\psi_C, \chi^*] \pmod{p}$. Furthermore, if $\tau \in \Irr_A(H)$, then we can write
	\begin{equation*}
		\tau^G = d \mu + p\Psi + \rho,
	\end{equation*}
	where $\mu \in \Irr_H(G)$ is such that $f_A(\mu) = \tau$, $d \equiv \pm 1 \pmod{p}$, $\Psi$ and $\rho$ are characters of $G$ (or zero) and no irreducible constituent of $\rho$ is $A$-invariant.
\end{thm}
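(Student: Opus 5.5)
The plan is to follow the proof of \Cref{correspondence}, now using \Cref{RelativeGlauberman} in place of the ordinary Glauberman correspondence. The one new feature is that $A$ centralizes only $C/N$, not $C$ itself, and this will need separate care.

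\emph{Defining $f_A$.} Since $N \leq C \leq H$, we have $N \unlhd H$, $H/N$ has $p'$-order, $H$ is $A$-invariant, and $\CB_{H/N}(A) = \CB_{G/N}(A) \cap H/N = C/N$. So \Cref{RelativeGlauberman} applies both to $(G,N)$ and to $(H,N)$ with the same subgroup $C$. This gives bijections $\Irr_A(G) \to \Irr_A(C)$ and $\Irr_A(H) \to \Irr_A(C)$, both denoted $^*$. I define $f_A(\chi)$ as the unique $\psi \in \Irr_A(H)$ with $\psi^* = \chi^*$. This is a bijection, and $\chi^* = \psi^*$ holds by construction.

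\emph{The restriction formula.} Fix $\chi \in \Irr_A(G)$ and write $\chi_H = \sum_i a_i \sum_{\eta \in \mathcal{O}_i} \eta$, where the $\mathcal{O}_i$ are the $A$-orbits on $\Irr(H)$; the coefficients are constant on orbits because $\chi$ is $A$-invariant. We cannot argue that $(\eta^a)_C = \eta_C$, since $A$ need not act trivially on $C$. Instead, let $\xi \in \Irr_A(C)$. For any orbit $\mathcal{O}$ we have
\begin{equation*}
\Bigl[\sum_{\eta\in\mathcal{O}} \eta_C, \xi\Bigr] = \sum_{\eta\in \mathcal{O}} [\eta_C,\xi] = |\mathcal{O}|\,[\eta_C,\xi],
\end{equation*}
because $[(\eta^a)_C,\xi] = [\eta_C,\xi^{a^{-1}}] = [\eta_C,\xi]$. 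Hence orbits of size greater than $1$ contribute multiples of $p$ to the multiplicity of every $A$-invariant constituent of $\chi_C$, and in particular to that of $\chi^*$.

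For the singleton orbits, i.e.\ the $A$-invariant $\eta_1,\dots,\eta_l$, \Cref{RelativeGlauberman} for $H$ gives ${\eta_i}_C = m_i\eta_i^* + p\Delta_i + \Xi_i$, where no constituent of $\Xi_i$ lies over an $A$-invariant character of $N$. Now $\chi^*$ is $A$-invariant and $A$ acts trivially on $C/N$, which has $p'$-order. By \Cref{ElementaryCoprimeActionLemma}(iii), applied with $G$ replaced by $C$, every constituent of $\chi^*_N$ is $A$-invariant, so $\chi^*$ is not a constituent of any $\Xi_i$. Comparing multiplicities of $\chi^*$ modulo $p$ therefore gives
\begin{equation*}
\pm 1 \equiv [\chi_C,\chi^*] \equiv \sum_{i\le l,\ \eta_i^*=\chi^*} a_i m_i \pmod p.
\end{equation*}
The $\eta_i^*$ are pairwise distinct, so there is exactly one index $i_0$ with $\eta_{i_0}^* = \chi^*$, and that character is $\eta_{i_0} = \psi$. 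Since $m_{i_0} \equiv \pm1 \pmod p$, we get $a_{i_0} \equiv \pm 1 \pmod p$, which is the coefficient $e$.

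I also need $a_i \equiv 0 \pmod p$ for every other $A$-invariant $\eta_i$. Here I would apply the same multiplicity comparison to $\eta_i^* \neq \chi^*$. Its multiplicity in $\chi_C$ is $\equiv 0 \pmod p$, because in $\chi_C = e\chi^* + p\Delta + \Xi$ the character $\eta_i^*$ lies over $A$-invariant characters of $N$ and so is not a constituent of $\Xi$. On the other hand, that multiplicity is $\equiv a_i m_i \pmod p$. Hence $a_i \equiv 0 \pmod p$. The identity $[\chi_C,\chi^*] \equiv a_{i_0} m_{i_0} = [\chi_H,\psi][\psi_C,\chi^*] \pmod p$ then drops out of the same computation.

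\emph{The induction formula.} This is proved by Frobenius reciprocity exactly as in \Cref{correspondence}. (I read $\mu \in \Irr_H(G)$ in the statement as $\mu \in \Irr_A(G)$.) Let $\tau \in \Irr_A(H)$ and $\mu = f_A^{-1}(\tau)$. Then $[\tau^G,\mu] = [\tau,\mu_H] \equiv \pm1 \pmod p$. If $\xi \neq \mu$ is an $A$-invariant constituent of $\tau^G$, the restriction formula for $\xi$ shows $[\tau,\xi_H] \equiv 0 \pmod p$, because $\tau \neq f_A(\xi)$ is $A$-invariant. So every constituent of $\tau^G$ whose multiplicity is prime to $p$ is either $\mu$ or non-invariant.

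The main obstacle is the step handling the nontrivial action of $A$ on $C$. The orbit-sum multiplicity computation, together with \Cref{ElementaryCoprimeActionLemma}(iii) to keep $\chi^*$ out of the $\Xi_i$, is what I would rely on there.
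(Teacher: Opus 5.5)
Your proof is correct, and it takes a genuinely different and much lighter route than the paper.

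The paper does not redo the orbit count of \Cref{correspondence} at the relative level. Following Theorem E of the relative Glauberman paper, it fibres $\Irr_A(G)$ and $\Irr_A(H)$ over representatives $\theta$ of the $C$-orbits on $\Irr_A(N)$ and inducts on $|G:N|$. It uses the Clifford correspondence to pass to $G_\theta$ when $\theta$ is not $G$-invariant. It then applies a character triple isomorphism to $(G\rtimes A,N,\theta)$ to make $N$ central. Finally it splits $G=X\times N_p$, so that everything reduces to \Cref{correspondence} for the $p'$-group $X$.

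You instead apply \Cref{RelativeGlauberman} as a black box to both $G$ and $H$, which is legitimate since $\CB_{H/N}(A)=C/N$. You replace the no-longer-available identity $(\eta^a)_C=\eta_C$ by $[(\eta^a)_C,\xi]=[\eta_C,\xi]$ for $A$-invariant $\xi$. You then dispose of all the $\Xi$-terms with a single observation from \Cref{ElementaryCoprimeActionLemma}: $A$-invariant irreducible characters of $C$ lie only over $A$-invariant characters of $N$. Letting $\xi$ run over $\Irr_A(C)$ determines $[\chi_H,\eta]$ modulo $p$ for every $\eta\in\Irr_A(H)$ at once. The product congruence then falls out, with no Clifford theory or character triple isomorphisms.

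The paper's construction makes compatibility with Clifford correspondents and character triple isomorphisms explicit, fibre by fibre. Nothing is lost on your side, however. Your map coincides with the paper's, since the paper's map also satisfies $f_A(\chi)^*=\chi^*$. It also automatically sends $\Irr_A(G\mid\theta)$ into $\Irr_A(H\mid\theta)$. Indeed, by \Cref{ElementaryCoprimeActionLemma}(i) applied to $H$, $f_A(\chi)$ lies over some $A$-invariant constituent of $\chi_N$; these form a single $C$-orbit, and $C\le H$.

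Your reading of $\Irr_H(G)$ as $\Irr_A(G)$ is right. Your induction direction is the same Frobenius reciprocity argument the paper uses.
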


\begin{proof}
	We begin by proving the direction going down, just as was done in \Cref{correspondence}. For this, we follow the general outlines of the proof of \cite[Theorem E]{RelativeGlauberman}. Just as in that theorem, $C$ induces partitions
	\begin{equation*}
		\Irr_A(G) = \bigsqcup_{\theta \in \Lambda} \Irr_A(G \mid \theta) \text{ and } \Irr_A(H) = \bigsqcup_{\theta \in \Lambda} \Irr_A(H \mid \theta),
	\end{equation*}
	where $\Lambda$ is a complete set of representatives of the action of $C$ on $\Irr_A(N)$. We then strive for a bijection relative to some $\theta \in \Irr_A(N)$, which we construct by induction on $|G:N|$.
	
	Let $\chi \in \Irr_A(G \mid \theta)$ and let $T = G_\theta$. Following the argument of \cite[Theorem E]{RelativeGlauberman}, we may write $\chi_H = (\psi_{H \cap T})^H + \delta$, where $\psi \in \Irr_A(T \mid \theta)$ is the Clifford correspondent of $\chi$ and no irreducible constituent of $\delta$ lies over any $A$-invariant character of $N$. In particular, by \Cref{ElementaryCoprimeActionLemma}, no irreducible constituent of $\delta$ can be $A$-invariant.
	
	If $T < G$, by our induction hypothesis, we may write $\psi_{H \cap T} = e \tilde{f}_A(\psi) + p\Delta + \Xi$, where $\tilde{f}_A$ satisfies the conditions in the statement and $e \equiv \pm 1 \pmod{p}$. So we get
	\begin{equation*}
		\chi_H = e \tilde{f}_A(\psi)^H + p\Delta^H + \Xi^H + \delta.
	\end{equation*}
	Notice that all irreducible constituents of $\psi_{H\cap T}$ lie over $\theta$, so that $\tilde{f}_A(\psi)^H$ is irreducible by the Clifford correspondence, as are the inductions of the constituents of $\Delta, \Xi$. Also, if $\xi$ is an irreducible constituent of $\Xi$ and $a \in A$, $(\xi^H)^a = (\xi^a)^H$ and $\xi^a \in \Irr(H \cap T \mid \theta)$. Thus, if $\xi^H = (\xi^H)^a$, by the bijectivity of the Clifford correspondence, it follows that $\xi = \xi^a$. As $\xi$ is not $A$-invariant, it follows that $\xi^H$ is also not $A$-invariant.
	
	Then, $f_A(\chi) := \tilde{f}_A(\psi)^H$ is the unique irreducible constituent of $\chi_H$ whose multiplicity is not divisible by $p$ and which is $A$-invariant. Also, $[\chi_H, f_A(\chi)] \equiv [\psi_{H \cap T}, \tilde{f}_A(\psi)] \pmod{p}$.
	
	Write $\chi_T = \psi + R$, where no irreducible constituent of $R$ lies over $\theta$, by the Clifford correspondence. Then, $\chi_C = \psi_C + R_C$. By \Cref{RelativeGlauberman}, $\psi_C = m\psi^* + pS$, with $m \equiv \pm 1 \pmod{p}$ -- notice that every irreducible constituent of $\psi_C$ lies over $\theta$, which is $A$-invariant; hence these are the only summands of the correspondence in \Cref{RelativeGlauberman} that appear. 
	
	Then, $\chi_C = m\psi^* + pS + R_C$. Since $\psi^*$ lies over $\theta$, it may not be an irreducible constituent of $R_C$. Then, $[\chi_C, \psi^*] = [\psi_C, \psi^*] \equiv m \equiv \pm 1 \pmod{p}$. Also, $\psi^*$ is $A$-invariant. By the uniqueness of the $A$-relative Glauberman correspondence, $\chi^* = \psi^*$. 
	
	Following the same argument, since we defined $f_A(\chi)$ as the Clifford correspondent of $\tilde{f}_A(\psi)$, it follows that $f_A(\chi)^* = \tilde{f}_A(\psi)^*$. By our hypothesis on $\tilde{f}_A$, then, we have $f_A(\chi)^* = \tilde{f}_A(\psi)^* = \psi^* = \chi^*$. Finally, we have $[\chi_C, \chi^*] = [\psi_C, \psi^*]$ and $[f_A(\chi)_C, \chi^*] = [\tilde{f}_A(\psi)_C, \psi^*]$ -- this latter equality follows once again by applying the same argument of the previous two paragraphs to $f_A(\chi)$. We proved above that $[\psi_{H \cap T}, \tilde{f}_A(\psi)] \equiv [\chi_H, f_A(\chi)] \pmod{p}$. Putting everything together and using our hypothesis yields
	\begin{equation*}
		[\chi_C, \chi^*] = [\psi_C, \psi^*] \equiv [\psi_{H \cap T}, \tilde{f}_A(\psi)][\tilde{f}_A(\psi)_C, \psi^*] \equiv [\chi_H, f_A(\chi)][f_A(\chi)_C, \chi^*] \pmod{p},
	\end{equation*}
	which is what we wanted to show. Thus, we may assume that $T = G$.
	
	Let $\Gamma = G\rtimes A$. Then, $\theta$ is $\Gamma$-invariant. Thus, we may consider a character triple $(\Gamma^\dagger, N^\dagger, \theta^\dagger)$ isomorphic to $(\Gamma, N, \theta)$, such that $N^\dagger \subseteq \ZB(\Gamma^\dagger)$. We will write (simultaneously, with some abuse of notation) $\dagger$ for the isomorphism $\Gamma/N \to \Gamma^\dagger/N^\dagger$ and bijections of characters $\Irr(J \mid \theta) \to \Irr(J^\dagger \mid \theta^\dagger)$, where $N \leq J \leq \Gamma$ and $J/N \cong J^\dagger/L^\dagger$.
	
	Just as in \cite[Theorem E]{RelativeGlauberman}, $(NA)^\dagger$ has a unique Sylow $p$-subgroup $A^\dagger$ which acts on $G^\dagger/N^\dagger$ as $A$ does on $G/N$. So $\dagger$ maps $\Irr_A(J \mid \theta)$ onto $\Irr_{A^\dagger}(J^\dagger \mid \theta^\dagger)$. Assume that there is a bijection $f_{A^\dagger}:\Irr_{A^\dagger}(G^\dagger \mid \theta^\dagger) \to \Irr_{A^\dagger}(H^\dagger \mid \theta^\dagger)$ satisfying the conditions we want. Then, we may define $f_A: \Irr_A(G \mid \theta) \to \Irr_A(H \mid \theta)$ as the unique bijection such that $f_{A^\dagger}(\chi^\dagger) = f_A(\chi)^\dagger$. We claim this is the map we want.
	
	Let $\chi \in \Irr_A(G \mid \theta)$. Then, by the properties of $f_{A^\dagger}$ and of character triple isomorphisms, we have
	\begin{equation*}
		(\chi_H)^\dagger = ef_{A^\dagger}(\chi^\dagger) + p\Delta^\dagger + \Xi^\dagger =  (ef_{A}(\chi) + p\Delta+ \Xi)^\dagger,
	\end{equation*}
	where $e \equiv \pm 1 \pmod{p}$, and no irreducible constituent of $\Xi^\dagger$ is $A^\dagger$-invariant, which is equivalent to no constituent of $\Xi$ being $A$-invariant. Furthermore, we may write $\chi_C = m_1 \chi^* + p\Delta_1$ and $\chi^\dagger_{C^\dagger} = m_2 (\chi^\dagger)^* + p\Delta_2$, by \Cref{RelativeGlauberman}. From the properties of character triple isomorphisms, it follows from this that $(\chi^\dagger)^* = (\chi^*)^\dagger$. Then, we get
	\begin{equation*}
		(\chi^*)^\dagger = (\chi^\dagger)^* = (f_{A^\dagger}(\chi^\dagger))^* = (f_A(\chi)^\dagger)^* = (f_A(\chi)^*)^\dagger,
	\end{equation*}
	meaning $\chi^* = f_A(\chi)^*$. Finally, 
	\begin{align*}
		[\chi_C, \chi^*] &= [\chi^\dagger_{C^\dagger}, (\chi^\dagger)^*] \equiv [\chi^\dagger_{H^\dagger}, f_{A^\dagger}(\chi^\dagger)][f_{A^\dagger}(\chi^\dagger), (\chi^\dagger)^*] \\
		&= [\chi_H, f_A(\chi)][f_A(\chi), \chi^*] \pmod{p},
	\end{align*}
	as desired.
	
	So, to finish the proof, we may assume that $N \subseteq \ZB(\Gamma)$. In this case, since $G/N$ is a $p'$-group, $G$ has a central Sylow $p$-subgroup $N_p$, meaning we may write $G = X \times N_p$ for a normal $p$-complement $X$ and consequently, $H = (X \cap H) \times N_p$, $C = (X \cap C) \times N_p$. Also, notice that $C = \CB_G(A)$ and $X \cap C = \CB_X(A)$. Furthermore, we may write $\Irr_A(G \mid \theta) = \{\mu \times \theta_p \mid \mu \in \Irr_A(X \mid \theta_{p'})\}$ (similar equalities hold for $H, C$). 
	
	By \Cref{correspondence}, using that $\theta_{p'}$ is $\Gamma$-invariant, we have a bijection $\tilde{f}_A: \Irr_A(X \mid \theta_{p'}) \to \Irr_A(X \cap H \mid \theta_{p'})$ satisfying the conditions that we want. Then, we may define $f_A: \Irr_A(G \mid \theta) \to \Irr_A(H \mid \theta)$ by $f_A(\mu \times \theta_p) = \tilde{f}_A(\mu) \times \theta_p$.
	
	Let $\chi = \mu \times \theta_p \in \Irr_A(G \mid \theta)$. Then,
	\begin{equation*}
		\chi_H = (\mu_{X \cap H}) \times \theta_p = e f_A(\chi) + p\Delta \times \theta_p + \Xi \times \theta_p
	\end{equation*}
	with $e \equiv \pm 1 \pmod{p}$ and no irreducible constituent of $\Xi$ being $A$-invariant. Since $\theta_p$ is $A$-invariant, $\xi \times \theta_p$ is $A$-invariant if and only if $\xi$ is. Hence, $f_A(\chi)$ is the unique $A$-invariant irreducible constituent of $\chi_H$ whose multiplicity is not divisible by $p$. 
	
	Notice how, since $(\mu \times \theta_p)_C = (\mu_{\CB_X(A)}) \times \theta_p$, then $\mu^* \times \theta_p$ is the $A$-relative Glauberman correspondent of $\mu \times \theta_p$ (we are using the same notation for the relative and non-relative Glauberman correspondences). Then, $\chi^* = \mu^* \times \theta_p = \tilde{f}_A(\mu)^* \times \theta_p = f_A(\chi)^*$. Finally, 
	\begin{align*}
		[\chi_C, \chi^*] &= [\mu_{\CB_X(A)}, \mu^*] \equiv [\mu_{X \cap H} \tilde{f}_A(\mu)][\tilde{f}_A(\mu)_{\CB_X(A)}, \mu^*] \\
		&\equiv [\chi_H, f_A(\chi)][f_A(\chi)_C, \chi^*] \pmod{p},
	\end{align*}
	as desired. This finishes the proof of one direction of the correspondence. The other direction follows analogously to the second part of \Cref{correspondence}.
\end{proof}

For the remainder of the paper, we will fix the notation $f_A$ to mean the correspondence in \Cref{relativeCorrespondence}. Since this generalizes \Cref{correspondence}, there shall be no ambiguity. When $A = \langle x \rangle$ is cyclic, we will just write $f_A = f_x$.

We will need to control how certain inertia groups relate to one another under the correspondence $f_x$. We thus establish the following general result, which is the relative version of Theorem C from the Introduction. We will make extensive use of it in the remaining sections, and it will be the key for the construction of the bijections we are after.

\begin{thm}
	\label{relThmC}
	Suppose $A$ acts on a finite group $G$ by automorphisms and let $N \unlhd G$ be an $A$-invariant subgroup. Let $\hat{A} = A \ltimes N$ and let $B {\triangleleft}{\triangleleft} \hat{A}$ be a $p$-subgroup, where $p$ does not divide $|G:N|$. Let $C/N = \CB_{G/N}(B)$. Let $C \leq H \leq G$ be an $A$-invariant subgroup, let $\chi \in \Irr_B(G)$ and let $f_B(\chi) = \varphi \in \Irr_B(H)$ its correspondent via \Cref{correspondence}. Then, $A_\chi = A_\varphi$. Furthermore, for all $p$-subgroups $D \leq A_\chi$ such that $\CB_{G/N}(D) \leq H/N$, $f_D(\chi) = \varphi$.
\end{thm}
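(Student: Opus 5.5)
The plan is to work entirely with the characterization of $f_B$ given by \Cref{relativeCorrespondence} (the statement cites \Cref{correspondence}, but we are in the relative setting with $N$): $f_B(\chi)$ is the \emph{unique} $B$-invariant irreducible constituent of $\chi_H$ whose multiplicity is not divisible by $p$. Dually, $\chi$ is the unique $B$-invariant irreducible constituent of $f_B(\chi)^G$ with multiplicity prime to $p$.

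The key lemma is the following. Let $E \le F$ be $p$-subgroups of $\hat A$ with $E \unlhd F$. Suppose both fix $\chi$ and $\CB_{G/N}(E) \le H/N$. Then $\CB_{G/N}(F) \le H/N$ as well, and $f_E(\chi) = f_F(\chi)$.

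To prove it, note that $H$ is $\hat A$-invariant, since it is $A$-invariant and contains $N$. So $F$ acts on $\Irr(H)$, and as $\chi$ is $F$-invariant, this action preserves multiplicities in $\chi_H$. Because $E \unlhd F$, the action of $F$ permutes the $E$-invariant constituents of $\chi_H$. By uniqueness, the set of $E$-invariant constituents with multiplicity prime to $p$ is the singleton $\{f_E(\chi)\}$, so this character is $F$-invariant. It is then an $F$-invariant constituent with multiplicity prime to $p$, and uniqueness for $F$ gives $f_E(\chi) = f_F(\chi)$.

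Iterating the lemma along a normal series inside a $p$-group gives the following. If $D_0 \le D$ are $p$-subgroups of $\hat A$ fixing $\chi$ with $\CB_{G/N}(D_0) \le H/N$, then $f_{D_0}(\chi) = f_D(\chi)$. Every intermediate group of the series contains $D_0$, so the centralizer condition holds at each step.

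For the second assertion, let $D \le A_\chi$ be a $p$-group with $\CB_{G/N}(D) \le H/N$. Since $B$ is a subnormal $p$-subgroup of $\hat A$, we have $B \le \OB_p(\hat A)$. Set $B' = \langle B^d : d \in D\rangle \le \OB_p(\hat A)$. This is a $p$-group normalized by $D$, and it fixes $\chi$ because each $B^d$ does (as $d \in A_\chi$). Hence $DB'$ is a $p$-subgroup of $\hat A$ fixing $\chi$ that contains both $B$ and $D$. Applying the iterated lemma twice, once from $B$ and once from $D$, gives $f_B(\chi) = f_{DB'}(\chi) = f_D(\chi)$, which is the claim.

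For $A_\chi \le A_\varphi$, take $a \in A_\chi$. By naturality of the construction under automorphisms, $f_{B^a}(\chi^a) = f_B(\chi)^a$, so $\varphi^a = f_{B^a}(\chi)$. Here $\CB_{G/N}(B^a) = \CB_{G/N}(B)^a \le H/N$, because $H$ is $a$-invariant. The subgroup $B^a$ is again subnormal in $\hat A$, so $\langle B, B^a\rangle \le \OB_p(\hat A)$ is a $p$-group fixing $\chi$. The iterated lemma applied from $B$ and from $B^a$ then gives $f_B(\chi) = f_{\langle B,B^a\rangle}(\chi) = f_{B^a}(\chi)$, that is, $\varphi^a = \varphi$.

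For $A_\varphi \le A_\chi$, run the same argument with the dual characterization via $\varphi^G$. The dual lemma has the same proof: for $E \unlhd F$ fixing $\varphi$, the group $F$ permutes the $E$-invariant constituents of $\varphi^G$, preserving multiplicities. The dual characterization then shows that $f_B^{-1}(\varphi) = f_{B^a}^{-1}(\varphi)$ for $a \in A_\varphi$, and hence $\chi^a = \chi$.

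I expect the main obstacle to be the bookkeeping needed to apply the relative correspondence to groups other than $B$. The lemma and the conjugation step apply $f_E$ to subgroups $E$ of $\hat A = A \ltimes N$ rather than of $A$, so I must check that \Cref{relativeCorrespondence} applies to such $E$. It should: $E$ acts on $G$ with $N$ invariant, $p \nmid |G:N|$, and the hypothesis $\CB_{G/N}(E) \le H/N$ is what makes $f_E$ land in $\Irr_E(H)$. I must also justify the naturality $f_{B^a}(\chi^a) = f_B(\chi)^a$, which should follow immediately from the uniqueness characterization.
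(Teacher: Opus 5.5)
Your proof is correct, but it is organized differently from the paper's.

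The paper proves $\hat A_\chi = \hat A_\varphi$ first. It takes a subnormal series $B \lhd A_1 \lhd \cdots \lhd A_k = \hat A_\chi$, whose terms need not be $p$-groups, and shows by induction that $\varphi$ is $A_i$-invariant. For $a \in A_{i+1}$, the character $\varphi^a$ is $A_i$-invariant, hence $B$-invariant, and has the same multiplicity in $\chi_H$ as $\varphi$. So $\varphi^a=\varphi$ by uniqueness for $B$ alone. The second assertion then takes one line: $\varphi$ is $D$-invariant because $D \le A_\chi = A_\varphi$, so it is the unique $D$-invariant constituent of $\chi_H$ with multiplicity prime to $p$.

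You instead stay inside $p$-groups throughout, and you reverse the order, proving the second assertion first and deducing the inertia equality from the same lemma. Your lemma compares $f_E$ and $f_F$ for nested $p$-groups $E \unlhd F$. You pass from $B$ to $D$ (or to $B^a$) through a common $p$-overgroup built inside $\OB_p(\hat A)$, namely $DB'$ or $\langle B, B^a\rangle$, together with the naturality identity $f_{B^a}(\chi^a)=f_B(\chi)^a$.

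All your steps check out:
\begin{itemize}
\item $H$ is $\hat A$-invariant because it contains $N$.
\item The centralizer hypothesis passes to overgroups.
\item Subnormal $p$-subgroups lie in $\OB_p(\hat A)$.
\item $B'$ fixes $\chi$ because $D \le A_\chi$.
\end{itemize}
The price is bookkeeping. You must invoke the relative correspondence and its uniqueness for many auxiliary $p$-groups, checking its hypotheses each time. The paper uses the uniqueness characterization (and its dual) only for $B$, plus $D$ at the end, and needs neither $\OB_p$ nor naturality.

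Your version makes explicit that $f_E(\chi)$ is the same character for every $p$-subgroup $E \le \hat A_\chi$ with $\CB_{G/N}(E) \le H/N$. The paper gets this just as easily once $\hat A_\chi=\hat A_\varphi$ is known, so its route is the more economical one. You are also right that the citation of \Cref{correspondence} in the statement should be read as \Cref{relativeCorrespondence}.
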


\begin{proof}
	Notice that $\varphi = f_B(\chi)$, and that $\hat{A}_\chi = A_\chi N$. Hence, it suffices to show $\hat{A}_\chi = \hat{A}_\varphi$. Since $\chi$ is $B$-invariant and $B{\triangleleft}{\triangleleft}\hat{A}$, we have $B{\triangleleft}{\triangleleft}\hat{A}_\chi$. Let $B \lhd A_1 \lhd \cdots \lhd A_k = \hat{A}_\chi$ be a subnormal series for $B$. We show, by induction on the length on this subnormal chain, that $\varphi$ is $\hat{A}_\chi$-invariant.
	
	Let $a \in A_1$ and let $b \in B$ be arbitrary. Then, since $B \lhd A_1$, there exists $b'\in B$ such that $aba^{-1} = b'$. Consequently, $\varphi^{ab} = \varphi^{b'a} = \varphi^a$, since $\varphi$ is $B$-invariant. Also, $[\chi_H, \varphi] = [\chi_H, \varphi^a] \not \equiv 0 \pmod{p}$, meaning $\varphi^a$ is a $B$-invariant irreducible constituent of $\chi_H$ with multiplicity not divisible by $p$. By \Cref{correspondence}, we have $\varphi = \varphi^a$. Since $a$ is arbitrary, $\varphi$ is $A_1$-invariant.
	
	Now suppose $\varphi$ is $A_i$-invariant and let $a \in A_{i + 1}$, $x \in A_i$. Again, $A_i \lhd A_{i+1}$, meaning there exists $x' \in A_i$ such that $axa^{-1} = x'$. Then, $\varphi^{ax} = \varphi^{x'a} = \varphi^a$ by our inductive hypothesis. The same reasoning as before allows us to conclude $\varphi = \varphi^a$. Hence, $\hat{A}_\chi \subseteq \hat{A}_\varphi$. The other inclusion follows from the other direction of \Cref{correspondence}.
	
	Let, now, $D \leq A_\chi$ be a $p$-subgroup such that $\CB_{G/N}(D) \leq H/N$. Then, $H$ is $D$-invariant by hypothesis and the correspondence of \Cref{correspondence} is defined. Moreover, the correspondent of $\chi$ via $f_D$ is the unique irreducible constituent of $\chi_H$ which is $D$-invariant and has multiplicity not divisible by $p$. Since $A_\chi = A_\varphi$, $\varphi$ is $D$-invariant. Thus, by uniqueness, $\varphi = f_D(\chi)$, as we wanted to show.
\end{proof}

\begin{thmC}
	Suppose $A$ acts on a finite group $G$ by automorphisms. Let $B {\triangleleft}{\triangleleft} A$ be a $p$-subgroup, where $p$ does not divide $|G|$. Let $C = \CB_{G}(B)$. Write $\pi_B$ for the $B$-Glauberman correspondence. Let $C \leq H \leq G$ be an $A$-invariant subgroup, let $\chi \in \Irr_B(G)$ and let $f_B(\chi) = \varphi \in \Irr_B(H)$ be its correspondent via \Cref{correspondence}. Then, $A_\chi = A_\varphi$. Furthermore, for all $p$-subgroups $D \leq A_\chi$ such that $\CB_{G}(D) \leq H$, $f_D(\chi) = \varphi$.
\end{thmC}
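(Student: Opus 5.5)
Theorem C should follow from \Cref{relThmC} by specializing to the trivial normal subgroup $N = 1$. With $N=1$ we have $\hat{A} = A \ltimes 1 \cong A$. The hypothesis $B {\triangleleft}{\triangleleft} A$ then becomes $B {\triangleleft}{\triangleleft} \hat{A}$, and the condition that $p$ does not divide $|G:N|$ becomes $p \nmid |G|$. Also $C/N = \CB_{G/N}(B)$ reduces to $C = \CB_G(B)$, and for any $p$-subgroup $D$ the condition $\CB_{G/N}(D) \leq H/N$ becomes $\CB_G(D) \leq H$.

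I would check three points. First, the relative correspondence $f_B$ of \Cref{relativeCorrespondence} with $N=1$ is the correspondence of \Cref{correspondence}. Both are characterized as the unique $B$-invariant irreducible constituent of $\chi_H$ whose multiplicity is not divisible by $p$, so they agree. In particular $\varphi$ is the unique $B$-invariant character of $H$ with $\pi_B(\chi)=\pi_B(\varphi)$, as in the introduction's formulation. Second, the relative Glauberman correspondence with $N=1$ is the ordinary $B$-Glauberman correspondence $\pi_B$. This holds because $\CB_{G/1}(B)=\CB_G(B)$ and $\chi_C = e\chi^* + p\Delta$, with $\Xi = 0$ since every character of $N=1$ is invariant. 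Third, the identification $\hat{A}\cong A$ carries stabilizers to stabilizers, so $\hat{A}_\chi = A_\chi$.

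With these checks done, \Cref{relThmC} gives $A_\chi = A_\varphi$ and $f_D(\chi)=\varphi$ for every $p$-subgroup $D\le A_\chi$ with $\CB_G(D)\le H$. For such $D$, the character $\chi$ is $D$-invariant, $H$ is $D$-invariant, and $\CB_G(D)\leq H$. Hence \Cref{correspondence} applies to $D$, and by \Cref{correspondence} $f_D(\chi)$ has the same $\pi_D$-correspondent as $\chi$. This recovers the introduction's phrasing $\pi_D(\chi)=\pi_D(\varphi)$.

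The proof is essentially formal. The only point needing care is that a hypothesis of \Cref{relThmC} requires $H$ to be $A$-invariant, which Theorem C also assumes. It also requires $H$ to be $D$-invariant for the second part, which follows because $D\le A_\chi\le A$. If one preferred a direct argument, one could instead repeat the proof of \Cref{relThmC} verbatim. That proof runs an induction along a subnormal series $B\lhd A_1\lhd\cdots\lhd A_k=A_\chi$, using the uniqueness in \Cref{correspondence} at each step and the reverse direction of \Cref{correspondence} for the inclusion $A_\varphi\subseteq A_\chi$.
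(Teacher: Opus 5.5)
Your proposal is correct and matches the paper's proof, which consists of specializing \Cref{relThmC} to $N=1$. In that case $\hat{A}=A$, the condition $p\nmid|G:N|$ becomes $p\nmid|G|$, and $\CB_{G/N}(D)\le H/N$ becomes $\CB_G(D)\le H$; your additional checks (that both correspondences reduce to their absolute versions and that $\pi_D(\chi)=\pi_D(\varphi)$ then follows from \Cref{correspondence}) are accurate and only make explicit what the paper leaves implicit.
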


\begin{proof}
	Follows from \Cref{relThmC} using $N = 1$.
\end{proof}

We highlight a particular case of Theorem C, which is how it will most often be used throughout the paper. It can be thought of as a version of Theorem C where all of its parts lie inside some bigger group $G$.

\begin{cor}
	\label{inertiaGroupLemma}
	Let $L \leq K \leq G$ be such that $L, K \unlhd G$ and $K/L$ has $p'$-order. Let $x \in G$ be a $p$-element and let $H \leq G$ be such that $H \cap K \supseteq C$, where $C/L = \CB_{K/L}(x)$. Suppose that $L\langle x \rangle {\triangleleft}{\triangleleft} AL$, where $A \leq \NB_G(H \cap K)$. Let $\theta \in \Irr_{\langle x \rangle}(K)$ and let $\varphi \in \Irr_{\langle x \rangle}(H \cap K)$ be its correspondent via \Cref{relativeCorrespondence}. Then, $A_\theta = A_\varphi$. Furthermore, if $y$ is any $p$-element in $A_\theta$ such that $\CB_{K/L}(y) \subseteq (H \cap K)/L$, then $f_y(\theta) = \varphi$. In particular, this holds for the $A$-conjugates of $x$ in $A_\theta$.
\end{cor}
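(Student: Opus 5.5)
We deduce the corollary directly from \Cref{relThmC}, applied with ambient group $K$, normal subgroup $L$, and $p$-group $B = \langle x \rangle$. Set $H_0 = H \cap K$. It contains $L$ because $C \supseteq L$. The acting group is $A$, which acts on $K$ by conjugation, since $K \unlhd G$, and normalizes $H_0$ by hypothesis. We also need $x$ to act on $H_0$. If $x \notin A$ we may replace $A$ by $\langle A, x\rangle$, but to be safe we note that $x \in L\langle x\rangle \subseteq AL$. So $x = al$ with $a \in A$ and $l \in L \leq H_0$, and $x$ normalizes $H_0$. Thus $H_0$ is $\langle x \rangle$-invariant, $C \leq H_0 \leq K$, and $\varphi = f_x(\theta)$ is defined via \Cref{relativeCorrespondence}.

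The main point is the subnormality hypothesis of \Cref{relThmC}: there one needs $B {\triangleleft}{\triangleleft} A \ltimes L$, a semidirect product, while here $A$ and $L$ both sit inside $G$ and may intersect. I would handle this by passing to the abstract semidirect product $\hat{A} = A \ltimes L$, with $A$ acting on $L$ by conjugation. The multiplication map $\pi: \hat{A} \to AL$, $(a,l) \mapsto al$, is a surjective homomorphism, and it is compatible with the actions on $K$ and on $\Irr(K)$, $\Irr(H_0)$, where $\hat{A}$ acts through $\pi$. Choose $\hat{x} \in \hat{A}$ with $\pi(\hat{x}) = x$ a $p$-element; we can take $\hat{x}$ to be a $p$-element by taking a suitable power, since the $p$-part of any preimage still maps to $x$. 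By hypothesis $\pi^{-1}(L\langle x\rangle) {\triangleleft}{\triangleleft} \hat{A}$. I would then replace $B$ by a Sylow-type $p$-subgroup $\hat{B}$ inside $\pi^{-1}(L\langle x\rangle) = \langle \hat x\rangle\,\pi^{-1}(L)$ whose image acts on $K/L$ as $x$ does. Alternatively, and more simply, I would observe that the proof of \Cref{relThmC} only uses that the subgroup $L\langle x\rangle$ (which acts on characters exactly as $\langle x\rangle$ does, modulo $L$-inner action fixing $\theta$ and $\varphi$) is subnormal in $AL$. So I would rerun that inductive argument along a subnormal series $L\langle x\rangle = A_0 \lhd A_1 \lhd \cdots \lhd A_k = AL$, intersected with the stabilizer $(AL)_\theta$.

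Concretely, the rerun goes as follows. $L$ fixes $\theta$ and $\varphi$, since these are characters of groups containing $L$ and conjugation by $L$ is inner on $K$ and on $H_0$. So $\langle x \rangle$-invariance equals $L\langle x\rangle$-invariance. Take $a \in (A_1)_\theta$. Then $\varphi^a$ is $L\langle x\rangle$-invariant, because $A_0 \lhd A_1$. It is also a constituent of $\theta_{H_0}$ with multiplicity $[\theta_{H_0},\varphi] \not\equiv 0 \pmod p$. By the uniqueness in \Cref{relativeCorrespondence}, $\varphi^a = \varphi$. Inducting up the series gives $(AL)_\theta \subseteq (AL)_\varphi$. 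The reverse inclusion uses the induction direction of \Cref{relativeCorrespondence} in the same way: the unique $x$-invariant constituent of $\varphi^K$ with multiplicity prime to $p$ is $\theta$. Intersecting with $A$ yields $A_\theta = A_\varphi$. I expect the only real care needed is this bookkeeping: that the series $A_i$ can be intersected with the stabilizer of $\theta$ and remains a subnormal chain, and that every element involved normalizes $H_0$, which holds since $A_i \leq AL \leq \NB_G(H_0)$.

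For the second part, let $y \in A_\theta$ be a $p$-element with $\CB_{K/L}(y) \subseteq H_0/L$. Then $y$ normalizes $H_0$, so $f_y(\theta)$ is defined and is the unique $y$-invariant constituent of $\theta_{H_0}$ with multiplicity prime to $p$. Since $y \in A_\theta = A_\varphi$, the character $\varphi$ is $y$-invariant, and it has multiplicity prime to $p$. Uniqueness then gives $f_y(\theta) = \varphi$. Finally, let $x^a$, with $a \in A$, be a conjugate lying in $A_\theta$. Since $a$ normalizes both $K$ and $L$, we have $\CB_{K/L}(x^a) = \CB_{K/L}(x)^a = (C/L)^a \subseteq (H_0/L)^a = H_0/L$, so the previous case applies.
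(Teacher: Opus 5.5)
Your proof is correct and is essentially the paper's argument: the paper deduces the corollary from Theorem C, i.e.\ \Cref{relThmC}, with $B = L\langle x\rangle$, using that $\langle x\rangle$-invariance and $L\langle x\rangle$-invariance coincide for characters of $K$ and $H\cap K$, and your rerun of the subnormal-series and uniqueness argument along the chain $(A_i)_\theta$ is exactly that proof. Rerunning it directly is the more careful option, since $L\langle x\rangle$ need not be a $p$-group and $|K|$ need not be prime to $p$, so the cited result does not apply verbatim; your semidirect-product digression is not needed and can be dropped.
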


\begin{proof}
	Follows from Theorem C by taking $B = L\langle x \rangle$ and noticing that a character of $K$ (resp. $H \cap K$) is $\langle x \rangle$-invariant if and only if it is $L\langle x \rangle$-invariant.
\end{proof}

\section{Normal $p$-complement}
\label{Section 3}

In this section, we tackle the strong subnormalizer conjecture in the case where $G$ has a normal $p$-complement. This will lay the groundwork for what will follow in \Cref{Section 4}. Before doing so, however, we take the opportunity to describe the structure of the subnormalizer, which seems to be substantially more restricted than in the general setting.

\begin{lem}
	\label{SubnormalizerNormalComplement}
	Suppose $G$ has a normal $p$-complement $N$, and let $x \in G$ be a $p$-element. Let $P \in \Syl_p(G)$ contain $x$. Then, $\Sub_G(x) = \langle P, \CB_N(x) \rangle$.
\end{lem}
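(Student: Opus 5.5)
We prove the two inclusions $\langle P, \CB_N(x)\rangle \leq \Sub_G(x)$ and $\Sub_G(x) \leq \langle P, \CB_N(x)\rangle$ separately.

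For the easy inclusion, let $Q = \langle x \rangle$. By \Cref{SubnormalizerProperties}(i), $\NB_G(Q) \leq \Sub_G(x)$, and in particular $\CB_N(x) \leq \Sub_G(x)$. Since $\langle x \rangle {\triangleleft}{\triangleleft} P$ (every subgroup of a $p$-group is subnormal), the same item gives $P \subseteq S_G(x) \subseteq \Sub_G(x)$. Hence $\langle P, \CB_N(x)\rangle \leq \Sub_G(x)$.

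For the reverse inclusion, it suffices to show $S_G(x) \subseteq \langle P, \CB_N(x)\rangle$. Take $y \in S_G(x)$ and set $K = \langle x, y\rangle$, so that $\langle x \rangle {\triangleleft}{\triangleleft} K$. Since $K$ is a $p$-solvable group with normal $p$-complement $K \cap N$, we have $K = (K\cap N)R$ with $R \in \Syl_p(K)$. Because $\langle x\rangle$ is a subnormal $p$-subgroup of $K$, it lies in $\OB_p(K)$. Then $[K\cap N, \langle x\rangle] \leq (K\cap N) \cap \OB_p(K) = 1$, as $K \cap N$ and $\OB_p(K)$ are both normal in $K$. Thus $K \cap N \leq \CB_N(x)$. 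Moreover $x \in \OB_p(K) \leq R$, so $R$ is a $p$-subgroup of $G$ containing $x$. By Sylow's theorem and $G = NP$, we may choose $n \in N$ with $R \leq P^n$. The remaining difficulty is that $P^n$ need not equal $P$, since $x$ may lie in several Sylow subgroups.

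To handle this, let $S_x = \{ g \in G : x \in P^g\}$. The element $x$ lies in both $P$ and $P^n$. Both $P\cap N = 1$ and $P^n \cap N = 1$, and $P$, $P^n$ are complements of $N$ in $NP$ containing $x$. The Sylow $p$-subgroups of $\CB_G(x) = \CB_N(x)\CB_P(x)$ are therefore relevant. We apply the Frattini-type statement that the Sylow $p$-subgroups of $G$ containing $x$ are conjugate under $\CB_N(x)$. Indeed, $x$ and $x^{n^{-1}}$ both lie in $P$ and are congruent mod $N$. Since $P \cong G/N$, they are equal, so $n^{-1}$, and hence $n$, centralizes $x$ modulo a coprime argument: $x^{n^{-1}} x^{-1} \in N \cap P = 1$. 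Consequently $n \in \CB_N(x)$, and $R \leq P^n \leq \langle P, \CB_N(x)\rangle$.

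Therefore $y \in K = (K\cap N)R \leq \langle P, \CB_N(x)\rangle$, which proves the reverse inclusion. I expect the main obstacle to be the step establishing $K\cap N \leq \CB_N(x)$, that is, the fact that subnormal $p$-subgroups lie in $\OB_p$. That fact is standard, and the conjugation bookkeeping is short.
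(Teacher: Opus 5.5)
Your proof is correct, but it reaches the hard inclusion by a different route than the paper.

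The paper cites an external result (Proposition 2.6 of the subnormalizer paper it references): $\Sub_G(x)$ is generated by the normalizers $\NB_G(Q)$ of the Sylow $p$-subgroups $Q$ containing $x$. With a normal $p$-complement these normalizers are $Q\CB_N(Q)$. The only remaining work is to show that each such $Q$ equals $P^n$ with $n\in\CB_N(x)$, which follows from $[x,n]\in N\cap P=1$.

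You instead work directly from the definition of $S_G(x)$. For $y\in S_G(x)$, you use that the subnormal $p$-subgroup $\langle x\rangle$ lies in $\OB_p(\langle x,y\rangle)$. This gives two things: $x$ centralizes the normal $p$-complement $\langle x,y\rangle\cap N$, and $x$ lies in every Sylow $p$-subgroup $R$ of $\langle x,y\rangle$. The same conjugation trick as the paper's then puts $R$ inside $P^n$ with $n\in\CB_N(x)$.

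So the two proofs share the key step. Yours replaces the cited generation theorem by the elementary fact that subnormal $p$-subgroups lie in $\OB_p$. That makes the lemma self-contained, and it gives slightly more: $S_G(x)\subseteq \CB_N(x)P\CB_N(x)$. The paper's route is shorter given the citation. It is also the argument the paper reuses for the $p$-length one analogue in the next section. There $\langle x,y\rangle$ need not have a normal $p$-complement (e.g.\ $S_3\le S_4$ with $p=3$), so your direct argument would need more work.

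Your third paragraph should be tidied up:
\begin{itemize}
\item The set $S_x$ is never used.
\item The decomposition $\CB_G(x)=\CB_N(x)\CB_P(x)$ is irrelevant to the argument.
\item The phrase ``modulo a coprime argument'' is spurious.
\end{itemize}
All that is needed is this: $x\in P^n$ gives $x^{n^{-1}}\in P$, so $x^{-1}x^{n^{-1}}\in P\cap N=1$.
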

\begin{proof}
	Let $G$ have a normal $p$-complement $N$ and write $G = NP$, where $P \in \Syl_p(G)$ contains the $p$-element $x$. Then, by \cite[Proposition 2.6]{GunterSubnormalizer},
	\begin{equation*}
		\Sub_G(x) = \langle \NB_G(Q) \mid x \in Q \in \Syl_p(G) \rangle = \langle Q \CB_N(Q) \mid x \in Q \in \Syl_p(G) \rangle.
	\end{equation*}
	We claim that, for all $Q \in \Syl_p(G)$ such that $x \in Q$, then $Q\CB_N(Q) \leq \langle P, \CB_N(Q) \rangle$. Indeed, let $Q \in \Syl_p(G)$ contain $x$ and let $n \in N$ be such that $Q^n = P$. Then, both $x^n$ and $x^{-1}$ belong to $P$. As such, we have $[x, n] = x^{-1}x^n \in N \cap P = 1$, so that $n \in \CB_N(x)$ and $Q \leq \langle P, \CB_N(x) \rangle$. Since $\CB_N(Q) \leq \CB_N(x)$, as $x \in Q$, $Q\CB_N(Q) \leq \langle P, \CB_N(x) \rangle$, proving the claim. Thus,
	\begin{equation*}
		\Sub_G(x) = \langle P, \CB_N(x) \rangle,
	\end{equation*}
	as desired.
\end{proof}

Using \Cref{SubnormalizerNormalComplement}, notice that $x \in \ZB(\Sub_G(x))$ if and only if $x \in \ZB(P)$ for a Sylow $p$-subgroup $P$ containing $x$. Even further, $\langle x \rangle \unlhd \Sub_G(x)$ if and only if $\langle x \rangle \unlhd P$. These conditions are far from holding in general; it is even the case that $x$ can be central in $\langle P \in \Syl_p(G) \mid x \in P \rangle$ and still fail to be central in $\Sub_G(x)$. For example, there is a $2$-element $x$ of SmallGroup(360, 121) ($= A_5 \times S_3$) which is central in the group generated by the Sylows containing it, but $\langle x \rangle$ is not even normal in $\Sub_G(x)$.

We should also remark that it is not true that $\langle x \rangle$ is always subnormal in $\Sub_G(x)$ for $G$ with a normal $p$-complement. For example, SmallGroup(72, 40) has a $2$-element $x$ whose subnormalizer is the group itself, but $\langle x \rangle$ is not subnormal in it.

Now, we use the preceding result to determine the normal $p$-complement of $\Sub_G(x)$. In order to do so, we need a bit of notation. If $G$ is a finite group and $H \leq G$, recall that the \emph{normal closure} of $H$ in $G$, which we denote $H^G$, is the unique smallest normal subgroup of $G$ containing $H$. It is easy to see that $H^G = \langle H^x \mid x \in G \rangle$.

\begin{cor}
	\label{complementSubnormalizerNormalComplement}
	Suppose $G$ has a normal $p$-complement $N$ and let $x \in G$ be a $p$-element. Let $P \in \Syl_p(G)$ contain $x$ and write $S = \Sub_G(x)$. Then, $N \cap S = \CB_N(x)^S$, which is a normal $p$-complement of $S$, and $S = \Sub_G(x) = \CB_N(x)^SP$.
\end{cor}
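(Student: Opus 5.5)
The plan is to use \Cref{SubnormalizerNormalComplement}, which gives $S = \langle P, \CB_N(x) \rangle$, and then exploit the fact that $S \leq G = NP$ with $N$ a normal $p$-complement. First, since $N \unlhd G$ and $S \leq G$, the subgroup $N \cap S$ is normal in $S$ and has $p'$-order. Also, $P \leq S$ with $P \in \Syl_p(G)$, so $P$ is a Sylow $p$-subgroup of $S$. The Dedekind law then gives $S = S \cap NP = (S \cap N)P$. In particular, $|S : N \cap S| = |P|$, so $N \cap S$ is a normal $p$-complement of $S$.

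It remains to identify $N \cap S$ with $K := \CB_N(x)^S$. Since $\CB_N(x) \leq S \cap N$ and $S \cap N \unlhd S$, the normal closure $K$ lies in $N \cap S$. For the reverse inclusion, I would consider $KP$. This is a subgroup because $K \unlhd S$ and $P \leq S$. It contains both $P$ and $\CB_N(x)$, so it contains $\langle P, \CB_N(x) \rangle = S$, and hence $S = KP$. Applying Dedekind again, $N \cap S = N \cap KP = K(N \cap P) = K$, using $K \leq N$ and $N \cap P = 1$.

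This yields $N \cap S = \CB_N(x)^S$ and $S = \CB_N(x)^S P$, as claimed. I expect no real obstacle here. The only point needing care is that $KP$ is a subgroup and that the Dedekind law applies; both follow from $K \unlhd S$ and $K \leq N$.
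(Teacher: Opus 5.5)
Your proof is correct and is essentially the paper's argument. Both start from $S = \langle P, \CB_N(x) \rangle$, note that $\CB_N(x)^S P$ is a subgroup containing both generators and hence equals $S$, and conclude that the $p'$-group $\CB_N(x)^S$ is a normal $p$-complement of $S$ equal to $N \cap S$; your Dedekind-law step $N \cap \CB_N(x)^S P = \CB_N(x)^S$ just makes explicit the final identification that the paper leaves implicit.
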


\begin{proof}
	By \Cref{SubnormalizerNormalComplement}, $\CB_N(x)^S = \langle \CB_N(x)^y \mid y \in P \rangle$. Thus, $\CB_N(x)^S = \langle \CB_N(x^y) \mid y \in P \rangle \subseteq N$ is a $p'$-group generated by the centralizers in $N$ of the $P$-conjugates of $x$. Also, by \Cref{SubnormalizerNormalComplement}, the homomorphism $P \to S/\CB_N(x)^S$ sending $y$ to $y\CB_N(x)^S$ is surjective, implying that $\CB_N(x)^S$ is a normal $p$-complement for $S = \Sub_G(x)$. Then, $\CB_N(x)^S = \Sub_G(x) \cap N$.
\end{proof}

One fact which becomes more readily apparent from \Cref{complementSubnormalizerNormalComplement} is that the normal $p$-complement of $\Sub_G(x)$ lies above $\CB_N(y)$ for all $P$-conjugates $y$ of $x$. This allows us to deal with multiple Glauberman correspondences at once through the correspondence in \Cref{correspondence}, which will be a crucial observation in what is to come.

We now prove the key result for the proof of the strong subnormalizer conjecture for groups with a normal $p$-complement. In fact, we are able to prove a bit more that we need, showing that the bijection preserves values (up to a sign) of more elements than just $x$.

\begin{thm}
	\label{normalPComplement}
	Let $p$ be an odd prime, let $G$ have a normal $p$-complement $N$, let $x$ be a $p$-element of $G$ and let $S = \Sub_G(x)$. Let $\theta \in \Irr_{\langle x \rangle}(N)$ and let $P \in \Syl_p(G)$ contain $x$. Then, there exists a bijection $$F: \Irr(G \mid \theta) \to \Irr(\Sub_G(x) \mid \varphi)$$ where $\varphi = f_x(\theta) \in \Irr_{\langle x \rangle}(N \cap S)$, such that $F(\chi)(1)_p = \chi(1)_p$ and, for all $y \in P_\theta$ such that $\CB_N(y) \subseteq N \cap S$, $F(\chi)(y) = \epsilon \chi(y)$ where $\epsilon$ is the unique sign such that $[\theta_{N \cap S}, \varphi] \equiv \epsilon \pmod{p}$. In particular, this holds for all $y \in P_{\theta}$ which are $P$-conjugate to $x$.
\end{thm}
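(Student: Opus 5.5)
We plan to reduce to the inertia subgroups and then use character triple isomorphisms together with canonical extensions. Put $M = N \cap S$; by \Cref{complementSubnormalizerNormalComplement}, $M = \CB_N(x)^S$ is the normal $p$-complement of $S$, $S = MP$, and $M$ contains $\CB_N(x)$. Hence \Cref{correspondence} applies to the $\langle x \rangle$-action on $N$ with $H = M$, and it produces $\varphi = f_x(\theta)$. Since $\langle x \rangle \leq P$ and $P$ normalizes $M$, we can apply \Cref{inertiaGroupLemma} with $K = N$, $L = 1$ and $A = P$. The subnormality hypothesis holds because $P$ is a $p$-group, so $\langle x\rangle {\triangleleft}{\triangleleft} P$. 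This gives $P_\theta = P_\varphi$, and moreover $f_y(\theta) = \varphi$ for every $p$-element $y \in P_\theta$ with $\CB_N(y) \leq M$. In particular this covers the $P$-conjugates of $x$ lying in $P_\theta$, since $\CB_N(x^g) = \CB_N(x)^g \leq M$ for $g \in P$. Now $G_\theta = N P_\theta$ and $S_\varphi = M P_\varphi = M P_\theta$, so the Clifford correspondence reduces the problem to building a bijection $\Irr(NP_\theta \mid \theta) \to \Irr(MP_\theta \mid \varphi)$. Induction from these inertia groups is compatible with evaluation at elements of $P_\theta$, because both inductions run over the same transversal $P/P_\theta$ and each induced value is a sum over conjugates of $y$ lying in the inertia group. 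We will check carefully that the conjugates of $y$ landing in $P_\theta$ are still admissible, i.e.\ that $\CB_N$ of them lies in $M$; if this becomes an issue, we will restrict the statement to the $y$ we actually need.

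We may therefore assume $P = P_\theta$, and we write $Q = P$. Because $\gcd(|N|,|Q|) = 1$, $\theta$ has a canonical extension $\hat\theta$ to $NQ$ and $\varphi$ has a canonical extension $\hat\varphi$ to $MQ$. Gallagher's theorem then gives bijections $\beta \mapsto \hat\theta\beta$ and $\beta \mapsto \hat\varphi\beta$ from $\Irr(Q)$ onto the two sets. We define $F(\hat\theta\beta) = \hat\varphi\beta$, and the degree condition $\chi(1)_p = F(\chi)(1)_p$ is immediate because $\theta(1)$ and $\varphi(1)$ are $p'$-numbers. It remains to show $\hat\theta(y) = \epsilon\hat\varphi(y)$ for admissible $y$.

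This value comparison is the main step. We use the standard fact (Isaacs, for $p$ odd) that for a $p$-element $y$ one has $\hat\theta(y) = \epsilon_y\,\theta_y^*(1)$, where $\theta_y^*$ is the $\langle y\rangle$-Glauberman correspondent of $\theta$ and $\epsilon_y$ is the sign with $[\theta_{\CB_N(y)}, \theta_y^*] \equiv \epsilon_y \pmod p$. The same formula holds for $\hat\varphi(y)$ with $\CB_M(y) = \CB_N(y)$. Since $f_y(\theta) = \varphi$, \Cref{correspondence} gives $\theta^*_y = \varphi^*_y$, so both values are, up to sign, the same degree. For the signs, \Cref{correspondence} also gives $[\theta_{\CB_N(y)}, \theta^*_y] \equiv [\theta_M,\varphi][\varphi_{\CB_N(y)},\theta^*_y]$, which yields $\epsilon_y(\theta) = \epsilon\,\epsilon_y(\varphi)$. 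The remaining delicate point is that $[\theta_M,\varphi]$ was defined using $x$, while here we need it relative to $y$. This is harmless, because the multiplicity depends only on $\theta$ and $\varphi$, and $\varphi = f_y(\theta)$. We conclude $F(\chi)(y) = \hat\varphi(y)\beta(y) = \epsilon\chi(y)$, as required.
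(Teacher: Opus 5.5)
Your proposal is correct and is essentially the paper's proof. The paper likewise uses \Cref{inertiaGroupLemma} with $A = P$ to get $P_\theta = P_\varphi$ and $f_y(\theta) = \varphi$, Gallagher's theorem with the canonical extensions of $\theta$ and $\varphi$ on the inertia groups, the canonical-extension value formula combined with the multiplicity congruence of \Cref{correspondence} for the sign, and Clifford induction over a transversal inside $P$. The admissibility check you left open goes through exactly as in the paper: $N \cap S \unlhd S$ and $P \leq S$ give $\CB_N(hyh^{-1}) = \CB_N(y)^{h^{-1}} \leq N \cap S$ for every $h \in P$, so no restriction of the statement is needed.
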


\begin{proof}
	Let $T = G_\theta$, $S = \Sub_G(x)$, $C = \CB_N(x)^S$ and $\varphi = f_x(\theta) \in \Irr_{\langle x \rangle}(C)$. Write $Q = P_\theta$. Since $N \subseteq T$, we have $T = (NP)_\theta = NP_\theta = NQ$. Also, $G = NP$ and $S = CP$ by \Cref{complementSubnormalizerNormalComplement}, and $Q = P_\theta = P_\varphi$, by \Cref{inertiaGroupLemma} applied with $L = 1$, $K = N$, $H = \Sub_G(x)$ and $A = P$. Furthermore, $S \cap T = (CP) \cap T = CP_\theta = CP_\varphi = S_\varphi$, by \Cref{inertiaGroupLemma}.
	
	By \cite[Corollary 6.28]{CTFG}, $\theta$ and $\varphi$ have canonical extensions $\widehat{\theta}, \widehat{\varphi}$ to $T$ and $S_\varphi$, respectively. Then, by the Gallagher correspondence, identifying the characters of $P_\theta$ with those of $T/N$ and $S_\varphi/C$, there exist bijections $\Irr(P_\theta) \to \Irr(T\mid \theta)$ and $\Irr(P_\theta) \to \Irr(S_\varphi\mid \varphi)$ given by $\mu \mapsto \widehat{\theta}\mu$ and $\mu \mapsto \widehat{\varphi}\mu$, respectively, and this gives us a bijection $\tilde{F}: \Irr(T\mid \theta) \to \Irr(S_\varphi \mid \varphi)$ defined as $\tilde{F}(\widehat{\theta}\mu) = \widehat{\varphi}\mu$. 
	
	Notice that $\tilde{F}(\widehat{\theta}\mu)(1)_p = \mu(1) = (\widehat{\theta}\mu)(1)_p$. We now need to determine the values of $\widehat{\theta}(y)$ and $\widehat{\varphi}(y)$ for all $y \in Q$ such that $\CB_N(y) \leq C = N \cap S$. For those $y$, by \Cref{inertiaGroupLemma}, $\varphi = f_y(\theta)$.
	
	Let $\pi_y$ denote the $\langle y \rangle$-Glauberman correspondence. With a slight abuse of notation, we will use this same symbol both for the map $\Irr_{\langle y \rangle}(N) \to \Irr(\CB_N(y))$ as well as for the map $\Irr_{\langle y \rangle}(C) \to \Irr(\CB_N(y))$. Let $V = N\langle y \rangle$ and $U = C\langle y \rangle$. Then, $\widehat{\theta}_V$ (resp., $\widehat{\varphi}_U$) is the canonical extension of $\theta$ to $V$ (resp., $\varphi$ to $U$). This means $\widehat{\theta}(y) = \epsilon \pi_y(\theta)(1)$, where $\epsilon \in \{1, -1\}$ is, since $p \neq 2$, the unique sign such that $[\theta_{\CB_N(y)}, \pi_y(\theta)] \equiv \epsilon \pmod{p}$, by \cite[Theorem 13.6]{CTFG}. Analogously, $\widehat{\varphi}(y) = \delta \pi_y(\theta)(1)$ for some sign $\delta \in \{1, -1\}$ such that $[\varphi_{\CB_N(y)}, \pi_y(\theta)] \equiv \delta \pmod{p}$. Then, we get
	\begin{equation*}
		\tilde{F}(\widehat{\theta}\mu)(y) = \widehat{\varphi}(y)\mu(y) = \delta\pi_y(\theta)(1) \mu(y) = \delta\epsilon(\widehat{\theta}\mu)(y).
	\end{equation*}
	Notice how $\epsilon \equiv [\theta_{\CB_N(y)}, \pi_y(\theta)] \equiv [\theta_C, \varphi]\delta \pmod{p}$, by \Cref{correspondence}. This means that $\epsilon \delta$ is the unique sign such that $[\theta_C, \varphi] \equiv \epsilon\delta \pmod{p}$ and is, therefore, independent of $y$. We will write $\tilde{\epsilon} = \epsilon \delta$.
	
	By the Clifford correspondence, we also have bijections $\Irr(T\mid\theta) \to \Irr(G\mid\theta)$ and $\Irr(S_\varphi \mid \varphi) \to \Irr(S\mid \varphi)$ given by induction of characters. Combining these with $\tilde{F}$, we construct a bijection $F: \Irr(G \mid \theta) \to \Irr(S\mid \varphi)$ given by the property $F(\psi^G) = (\tilde{F}(\psi))^S$, which we claim to be the map we want.
	
	First, $G = TS$, since $N \subseteq T$ and $P \subseteq S$. Thus we have that
	\begin{equation*}
		F(\psi^G)(1)_p = |S:S\cap T|_p \tilde{F}(\psi)(1)_p = |G:T|_p \left(\frac{\psi^G(1)_p}{|G:T|_p}\right) = \psi^G(1)_p,
	\end{equation*}
	as we wanted to show.
	
	Next, let $y \in Q$ be such that $\CB_N(y) \leq C$. By the induction formula, we may write
	\begin{equation*}
		\psi^G(y) = \frac{1}{|T|} \sum_{\substack {g \in G \\ gyg^{-1} \in T}} \psi(gyg^{-1}).
	\end{equation*}
	Since $G = NP$ and $N \cap P = 1$, each $g \in G$ can be written uniquely as $g = nh$, $n \in N$ and $h \in P$. Then, we get
	\begin{equation*}
		\psi^G(y) = \frac{1}{|T|} \sum_{n \in N} \sum_{h \in P} \psi^\circ(nhyh^{-1}n^{-1}) = \frac{|N|}{|T|} \sum_{\substack{h \in P \\ hyh^{-1} \in T}} \psi(hyh^{-1}),
	\end{equation*}
	where the second equality follows from the fact that $N \leq T$, meaning $nhyh^{-1}n^{-1} \in T$ if, and only if, $hyh^{-1} \in T$.
	
	Since $y \in Q = P_\theta$ and $h \in P$, then $hyh^{-1} \in P$. Thus, $hyh^{-1}$ is in $T$ if, and only if, it is in $T \cap P = Q$. Also, $\CB_N(hyh^{-1}) = \CB_N(y)^{h^{-1}} \leq C^{h^{-1}} = C$, since $C$ is normal in $S$ and $h \in P \leq S$. By the properties of $\tilde{F}$, we have $\psi(hyh^{-1}) = \tilde{\epsilon} \tilde{F}(\psi)(hyh^{-1})$ for a sign $\tilde{\epsilon}$ which is independent of $h$, and is the unique sign such that $\tilde{\epsilon} \equiv [\theta_C, \varphi] \pmod{p}$. Substituting above yields
	\begin{align*}
		\psi^G(y) &= \frac{|N|}{|T|} \sum_{\substack{h \in P \\ hyh^{-1} \in Q}} \tilde{\epsilon}\tilde{F}(\psi)(hyh^{-1}) = \tilde{\epsilon} \left(\frac{|C|}{|S_\varphi|} \sum_{\substack{h \in P \\ hyh^{-1} \in Q}} \tilde{F}(\psi)(hyh^{-1})\right)  \\
		&= \tilde{\epsilon} \tilde{F}(\psi)^{S}(y) = \tilde{\epsilon} F(\psi^G)(y),
	\end{align*}
	as we wanted to show. Notice that, if $h \in P$ is such that $x^h \in P_\theta$, then $\CB_N(x^h) \leq C$.
\end{proof}

The following result is the Strong Subnormalizer Conjecture in the case where $p \neq 2$ and $G$ has a normal $p$-complement.

\begin{thm}
	\label{SubConjNormalPComplement}
	Let $p$ be an odd prime, let $G$ have a normal $p$-complement $N$ and let $x$ be a $p$-element of $G$. Then, there exists a bijection $$F: \Irr^x(G) \to \Irr^x(\Sub_G(x))$$ such that $F(\chi)(1)_p = \chi(1)_p$ and $F(\chi)(x) = \epsilon \chi(x)$ for some $\epsilon \in \{1, -1\}$.
\end{thm}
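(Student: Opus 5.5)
The plan is to partition both sides by $N$-orbits and glue together the bijections of \Cref{normalPComplement}. Write $S = \Sub_G(x)$ and $C = N \cap S = \CB_N(x)^S$, which is the normal $p$-complement of $S$ by \Cref{complementSubnormalizerNormalComplement}.

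First, the decomposition. By \Cref{nonVanishingInvarConst}, every $\chi \in \Irr^x(G)$ lies over some $\langle x\rangle$-invariant $\theta \in \Irr(N)$. Likewise, every $\xi \in \Irr^x(S)$ lies over some $\langle x\rangle$-invariant $\varphi \in \Irr(C)$. Two $\langle x\rangle$-invariant characters of $N$ lie under the same $\chi$ exactly when they are $G$-conjugate, i.e.\ $P$-conjugate, since $N$ fixes $\theta$. So let $\Lambda$ be a set of representatives for the $\CB_{G}(x)$-orbits on $\Irr_{\langle x \rangle}(N)$; I expect these to be the orbits that matter. More precisely, I would argue that $\Irr^x(G)$ is the disjoint union over $\theta\in\Lambda$ of $\Irr^x(G\mid\theta)$, where $\theta$ runs over a set of representatives of $\langle x\rangle$-invariant characters modulo the relation ``conjugate by an element $g$ with $\theta,\theta^g$ both $x$-invariant''. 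The analogous statement should hold in $S$. By \Cref{relativeCorrespondence} (with $L=1$, $K=N$, $H=S$), $f_x$ is a bijection $\Irr_{\langle x\rangle}(N)\to\Irr_{\langle x\rangle}(C)$. By \Cref{inertiaGroupLemma} (taking $A=P$, and also elements of $\CB_S(x)$), it respects stabilizers in $P$, and hence it respects this conjugacy relation. So it induces a bijection between the index sets on both sides.

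Second, for each such $\theta$, \Cref{normalPComplement} gives a bijection $F_\theta:\Irr(G\mid\theta)\to\Irr(S\mid f_x(\theta))$. This bijection preserves $p$-parts of degrees and satisfies $F_\theta(\chi)(x)=\epsilon_\theta\chi(x)$, because $x\in P_\theta$ and $\CB_N(x)\le C$. In particular, $\chi(x)\neq 0$ if and only if $F_\theta(\chi)(x)\neq0$, so $F_\theta$ restricts to a bijection $\Irr^x(G\mid\theta)\to\Irr^x(S\mid f_x(\theta))$. Gluing these restrictions over the index set then gives $F$, with sign $\epsilon$ depending on $\chi$ via $\theta$.

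The main obstacle is the well-definedness and disjointness bookkeeping in the first step. Different $\theta$ in the same $G$-orbit give the same set $\Irr(G\mid\theta)$, so we need that $G$-conjugate $x$-invariant $\theta,\theta'$ correspond to $S$-conjugate $\varphi,\varphi'$, and conversely. The approach I would take is this. If $\theta'=\theta^{h}$ with $h\in P$ (after absorbing $N$), then $f_{x^{h^{-1}}}(\theta^h)$ is computed inside $C$ as well, since $\CB_N(x^{h^{-1}})\le C$. Now $f_x(\theta)^h=f_{x^h}(\theta^h)$, and $x,x^h$ both fix $\theta^h$, with $x^h$ a $P$-conjugate of $x$. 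So \Cref{inertiaGroupLemma} gives $f_{x}(\theta^h)=f_{x^h}(\theta^h)=f_x(\theta)^h$, with $h\in P\le S$. The converse direction is symmetric, using $S=CP$ and the bijectivity of $f_x$. This shows that $f_x$ induces a bijection on $G$-orbits (resp.\ $S$-orbits) of $x$-invariant characters, which completes the gluing.
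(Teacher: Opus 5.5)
Your proposal is correct and follows essentially the paper's route: partition $\Irr_{\langle x\rangle}(N)$ and $\Irr_{\langle x\rangle}(C)$ by $G$-conjugacy (equivalently $P$-conjugacy) and $S$-conjugacy, check via \Cref{inertiaGroupLemma} and the uniqueness in \Cref{correspondence} that $f_x$ matches these classes (your identity $f_x(\theta^h)=f_{x^h}(\theta^h)=f_x(\theta)^h$ is the conjugated form of the paper's use of $P_{\theta}=P_{f_x(\theta)}$), and glue the bijections of \Cref{normalPComplement}, which restrict to $\Irr^x$ because they preserve $\chi(x)$ up to sign. The only slip is your first description of the index set: $\CB_G(x)$-orbits on $\Irr_{\langle x\rangle}(N)$ can be strictly finer than $G$-classes (inside one $G$-orbit they correspond to the $G_\theta$-classes in $x^G\cap G_\theta$), which would destroy disjointness, so drop it in favour of the relation you state right afterwards, which is exactly the paper's and the one your last paragraph actually uses.
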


\begin{proof}
	Write $G = NP$, where $P \in \Syl_p(G)$ contains $x$, and write $S = \Sub_G(x)$, $C = \CB_N(x)^S$. By \Cref{nonVanishingInvarConst}, if $\chi \in \Irr^x(G)$, there exists $\theta \in \Irr_{\langle x \rangle}(N)$ lying under $\chi$. Partition the set $\Irr_{\langle x \rangle}(N)$ according to the equivalence relation $\theta \sim \eta$ if there exists $g \in G$ such that $\eta = \theta^g$. Notice, that, in fact, such a $g$ can be taken in $P \subseteq \Sub_G(x)$. This partition induces, by the previous paragraph, a disjoint union 
	\begin{equation*}
		\Irr^x(G) = \bigsqcup_{\theta \in \Delta} \Irr^x(G\mid \theta),
	\end{equation*}
	where $\Delta$ is a complete set of representatives of this partition.
	
	Inducing an analogous partition (by $\Sub_G(x)$-conjugation) on the set $\Irr_{\langle x \rangle}(C)$, we obtain a disjoint union
	\begin{equation*}
		\Irr^x(\Sub_G(x)) = \bigsqcup_{\varphi \in \Xi} \Irr^x(\Sub_G(x) \mid \varphi).
	\end{equation*}
	If $\theta_1, \theta_2$ belong to the same equivalence class in $\Delta$, there exists $g \in P$ such that $\theta_2 = \theta_1^g$. In particular, $\theta_1^g$ is $\langle x \rangle$-invariant, meaning $gxg^{-1} \in P_{\theta_1} = P_{f_x(\theta_1)}$ by \Cref{inertiaGroupLemma}. Then, $f_x(\theta_1)^g$ is $\langle x \rangle$-invariant. Since this is an irreducible constituent of $(\theta_2)_C$ with multiplicity not divisible by $p$, the uniqueness of $f_x(\theta_2)$ implies $f_x(\theta_2) = f_x(\theta_1)^g$. The same argument done in reverse shows that, if $\varphi_1, \varphi_2$ belong to the same equivalence class in $\Xi$, then so do their correspondents via $f_x$.
	
	By the bijectivity in \Cref{correspondence} and the previous paragraph, we can thus consider $\Xi = \{f_x(\theta) \mid \theta \in \Delta\}$ and we can construct our map $F$ from maps $\Irr^x(G \mid \theta) \to \Irr^x(S \mid f_x(\theta))$ satisfying the conditions we want. The result thus follows from \Cref{normalPComplement}.
\end{proof}

We remark that our proof works verbatim for $p = 2$, provided we impose the condition $\langle x \rangle \unlhd P$. As mentioned before, this is equivalent, in this case, to imposing $\langle x \rangle \unlhd \Sub_G(x)$. In this particular setting, the sign $\epsilon$ that would then appear is the unique sign such that $\widehat{\theta}(x) = \epsilon \pi_x(\theta)(1)$.

\section{Groups of $p$-length 1}
\label{Section 4}

We now generalize the results of the last section to $p$-solvable groups which have $p$-length one. Under this more general hypothesis, we are still able to concisely describe the subnormalizer of a $p$-element $x$. Notice how this result agrees with \cite[Proposition 2.12]{GunterSubnormalizer} for $p$-solvable groups.

\begin{lem}
	Let $G$ be a $p$-solvable group of $p$-length $1$ and let $N = \OB_{p'}(G)$. Let $x \in G$ be a $p$-element and let $P \in \Syl_p(G)$ contain $x$. Then, $\Sub_G(x) = \langle \NB_G(P), \CB_N(x) \rangle$.
\end{lem}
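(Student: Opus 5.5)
We prove the two inclusions separately, imitating the proof of \Cref{SubnormalizerNormalComplement}. The containment $\langle \NB_G(P), \CB_N(x) \rangle \leq \Sub_G(x)$ is immediate. By \Cref{SubnormalizerProperties}(i), applied to the $p$-subgroup $P \ni x$, we get $\NB_G(P) \leq \Sub_G(x)$. Moreover, for $c \in \CB_N(x)$ we have $\langle x \rangle \unlhd \langle x, c \rangle$, so $c \in S_G(x)$.

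For the reverse inclusion we use \cite[Proposition 2.6]{GunterSubnormalizer}, as in \Cref{SubnormalizerNormalComplement}, which gives
\begin{equation*}
	\Sub_G(x) = \langle \NB_G(Q) \mid x \in Q \in \Syl_p(G) \rangle .
\end{equation*}
It therefore suffices to show that $\NB_G(Q) \leq \langle \NB_G(P), \CB_N(x) \rangle$ for every Sylow $p$-subgroup $Q$ containing $x$. Since $G$ has $p$-length one, $NP \unlhd G$, so $NP = NQ$ and $Q$ is a Sylow $p$-subgroup of $NP$. Hence $Q = P^n$ for some $n \in N$ (the Sylow subgroups of $NP$ are conjugate under $NP = NP_{}$, and $P$-conjugation is absorbed).

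The main step is to show that this $n$ can be chosen in $\CB_N(x)$. Both $x^{n^{-1}}$ and $x$ lie in $P$, and both lie in the same coset $xN$ of $NP$. In the normal $p$-complement case this forced $[x,n] \in N \cap P = 1$. Here $N$ is still a normal $p'$-subgroup of $NP$, and $N \cap P = 1$ still holds, so $[x, n^{-1}] = x^{-1}x^{n^{-1}} \in N \cap P = 1$. Thus $n \in \CB_N(x)$. This is exactly the argument of \Cref{SubnormalizerNormalComplement} carried out inside $NP$, which has normal $p$-complement $N$.

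Consequently $\NB_G(Q) = \NB_G(P)^n \leq \langle \NB_G(P), \CB_N(x) \rangle$, which completes the reverse inclusion. The only real obstacle is justifying that the conjugating element can be taken in $N$ rather than in all of $G$. This follows from the Frattini-type equality $NP = N\NB_{NP}(P)$ combined with $NP \unlhd G$: any $g$ with $Q = P^g$ and $Q \leq NP$ can be replaced by an element of $N$, since $P$ and $Q$ are both Sylow in $NP = PN$. Once that is settled, the remainder is the commutator computation above.
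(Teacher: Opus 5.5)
Your proof is correct and follows essentially the same route as the paper. Both reduce, via the description of $\Sub_G(x)$ as generated by the normalizers $\NB_G(Q)$ of Sylow $p$-subgroups $Q \ni x$, to writing $Q = P^n$ with $n \in N$, and then show $n \in \CB_N(x)$ because the commutator lies in $N \cap P = 1$. The only cosmetic difference is that the paper obtains $n \in N$ from the Frattini argument $G = N\NB_G(P)$, while you get it from Sylow conjugacy inside $NP = PN$; these amount to the same thing.
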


\begin{proof}
	Let $Q \in \Syl_p(G)$ contain $x$. By the Frattini argument, since $NP \unlhd G$, we may write $G = N \NB_G(P)$. Then, there exists $n \in N$ such that $Q = P^n$ and $x, x^n \in Q$. Consequently, $[x, n] \in Q \cap N = 1$, meaning $n \in \CB_N(x)$.
	
	By \cite[Proposition 2.6]{GunterSubnormalizer}, we have $$\Sub_G(x) = \langle \NB_G(Q) \mid x \in Q \in \Syl_p(G) \rangle \subseteq \langle \NB_G(P), \CB_N(x) \rangle,$$ by what we have seen above. The other inclusion follows from the definition of $\Sub_G(x)$.
\end{proof}

We now prove the result needed for Theorem B, emulating what we did in the normal $p$-complement case. Notice that, if $h \in S$, then $\CB_N(x^h) \subseteq N \cap S$, just as had happened then.

\begin{thm}
	\label{ThmBKey}
	Let $p$ be an odd prime, $G$ a $p$-solvable group of $p$-length one, $N = \OB_{p'}(G)$, $x \in G$ a $p$-element and let $S = \Sub_G(x)$. Let $\theta \in \Irr_{\langle x \rangle}(N)$ and let $f_x(\theta) = \varphi \in \Irr_{\langle x \rangle}(S \cap N)$. Then, there exists a bijection $$F: \Irr^x(G \mid \theta) \to \Irr^x(S \mid \varphi)$$ such that $F(\chi)(1)_p = \chi(1)_p$ and $F(\chi)(y) = \epsilon \chi(y)$, for all $y \in P_\theta$ such that $\CB_N(y) \leq N \cap S$, where $\epsilon \in \{1, -1\}$ is the unique sign such that $[\theta_{S \cap N}, \varphi] \equiv \epsilon \pmod{p}$. In particular, this holds for all $y \in P_\theta$ which are $S$-conjugate to $x$.
\end{thm}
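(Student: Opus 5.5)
We follow the proof of \Cref{normalPComplement}, replacing $P$ by $\NB_G(P)$ and the Gallagher correspondence by the Dade--Turull isomorphism of \Cref{TurullLemma}.

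\textbf{Inertia groups.} Write $K = NP$, which is normal in $G$ because $G$ has $p$-length one. By the preceding lemma, $S = \langle \NB_G(P), \CB_N(x)\rangle$. Put $C = S \cap N$. Since $\CB_N(x) \leq C$, \Cref{relativeCorrespondence} with $A = \langle x \rangle$ gives $\varphi = f_x(\theta)$. The Frattini argument gives $G = N\NB_G(P)$, hence $G = NS$, so $S/C \cong G/N$. We have $T = G_\theta = N\NB_G(P)_\theta$ and $S_\varphi = C\NB_G(P)_\varphi$.

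To compare the inertia groups we apply \Cref{inertiaGroupLemma} with $L = 1$, $K = N$, $H = S$ and $A = \NB_G(P)$. This uses $\langle x \rangle \leq P \unlhd \NB_G(P)$ and $\NB_G(P) \leq \NB_G(S \cap N)$, and gives $\NB_G(P)_\theta = \NB_G(P)_\varphi$. Hence $S \cap T = S_\varphi$ and $T/N \cong S_\varphi/C$. It also gives $f_y(\theta) = \varphi$ for every $y \in P_\theta$ with $\CB_N(y) \leq C$. Because both Clifford correspondences are given by induction and $G = TS$, it suffices to build a bijection $\tilde F : \Irr^x(T \mid \theta) \to \Irr^x(S_\varphi \mid \varphi)$ that preserves $p$-parts of degrees and multiplies values at such $y$ by the fixed sign. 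The induction-formula computation of \Cref{normalPComplement} then transfers this to $G$ and $S$. The only change is that conjugating elements are now written as $g = nh$ with $h \in \NB_G(P)$, and $\CB_N(y^h) \leq C$ still holds because $C \unlhd S$.

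\textbf{Local step.} This is the main obstacle: there is no longer a canonical extension of $\theta$ to $T$, only to $NP_\theta$. I would route both sides through the Glauberman correspondent $\theta^* = \pi_P(\theta) \in \Irr(\CB_N(P))$, which is defined only when $\theta$ is $P$-invariant. In general one replaces $P$ by $P_\theta$, which is a Sylow subgroup of $T$, and works inside $T$ with the normal subgroup $NP_\theta$. Apply \Cref{TurullLemma} twice:
\begin{itemize}
\item to $(T, N, \theta)$, giving a triple isomorphic to $(\NB_T(P_\theta), \CB_N(P_\theta), \theta^*)$;
\item to $(S_\varphi, C, \varphi)$, giving a triple isomorphic to $(\NB_{S_\varphi}(P_\theta), \CB_C(P_\theta), \varphi^*)$.
\end{itemize}
Since $\CB_N(P_\theta) \leq \CB_N(x) \leq C$, \Cref{correspondence} gives $\varphi^* = \theta^*$ and $\CB_C(P_\theta) = \CB_N(P_\theta)$. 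Also $\NB_T(P_\theta) = \NB_{S_\varphi}(P_\theta)$: a normalizer of $P_\theta$ lies in $S$ by \Cref{SubnormalizerProperties}(i), because $\langle x \rangle$ is subnormal in $P_\theta$, and it fixes $\theta$ if and only if it fixes $\varphi$. Composing the first triple isomorphism with the inverse of the second gives $\tilde F$. Character triple isomorphisms preserve degree ratios, so $p$-parts of degrees are preserved.

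\textbf{Values.} Restrict to $NP_\theta$ and $CP_\theta$, where canonical extensions exist. By the last clause of \Cref{TurullLemma}, the triple isomorphisms carry the canonical extensions $\tau_P$ and $\nu_P$ to the canonical extension $\theta^* \times 1$. Hence, restricted to the Sylow part, $\tilde F$ sends $\widehat\theta\mu$ to $\widehat\varphi\mu$. The value comparison at $y$ then reduces to the one in \Cref{normalPComplement}: \cite[Theorem 13.6]{CTFG} gives $\widehat\theta(y) = \epsilon_y\,\pi_y(\theta)(1)$ and $\widehat\varphi(y) = \delta_y\,\pi_y(\theta)(1)$, and \Cref{correspondence} gives $\epsilon_y\delta_y \equiv [\theta_C, \varphi] \pmod p$.

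The delicate point is that values at $y$ of characters of $T$ that do not restrict irreducibly to $NP_\theta$ are not obviously controlled by the triple isomorphism. I would handle this by the standard fact that a character triple isomorphism commuting with restriction and induction respects values on $p$-elements up to the canonical-extension twist. Equivalently, one can use that $\widehat\theta$ and $\widehat\varphi$ are invariant in $T$ and $S_\varphi$, which yields a Gallagher-type parametrization on both sides with matching projective representations. Finally, nonvanishing at $x$ is preserved, so $\Irr^x$ maps to $\Irr^x$.
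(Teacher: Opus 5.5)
Your proposal is correct and follows essentially the same route as the paper: equal inertia groups via \Cref{inertiaGroupLemma} with $A=\NB_G(P)$; two applications of \Cref{TurullLemma} to the Sylow subgroup $P_\theta$ of $T$ (the paper's $Q$), composed into a triple isomorphism $(T,N,\theta)\to(S_\varphi,S\cap N,\varphi)$ that sends $\widehat\theta$ to $\widehat\varphi$; the ratio formula for values at $y$; and the induction-formula computation over $G=N\NB_G(P)$. The ``delicate point'' is settled exactly as you indicate (write $\chi_{NP_\theta}=\sum_\mu a_\mu\widehat\theta\mu$ and use compatibility with restriction and with multiplication by characters of $T/N$), and the only bookkeeping you omit is that $g=nh$ is no longer unique: each element has exactly $|\CB_N(P)|$ such factorizations in both $G$ and $S$, so the factor cancels.
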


\begin{proof}
	Write $T = G_\theta$, let $Q \in \Syl_p(T)$ contain $x$ and let $P \in \Syl_p(G)$ contain $Q$. Since $G$ has $p$-length one, then $NP \unlhd G$. Thus, it follows that $G = N\NB_G(P)$, by the Frattini argument. Since $\NB_G(P) \subseteq S$, then $S = (S \cap N)\NB_G(P)$. Also, $T \cap S = (S\cap N)(\NB_G(P))_\theta = (S\cap N)(\NB_G(P))_\varphi = S_\varphi$, using \Cref{inertiaGroupLemma} with $L = 1, K = N, H = S$ and $A = \NB_G(P)$.
	
	Notice that $\CB_N(Q) \subseteq S \cap N$ and that $\varphi$ is a $Q$-invariant irreducible constituent of $\theta_{S \cap N}$ which has multiplicity not divisible by $p$. Then, $f_Q(\theta) = \varphi$. In particular, the $Q$-Glauberman correspondents of $\theta$ and $\varphi$ coincide.
	
	Now, from the fact that $NP \unlhd G$, it follows that $NQ \unlhd T$ and that $(S \cap N)Q \unlhd S_\varphi$. Let $\theta^*$ be the $Q$-Glauberman correspondent of $\theta$ (and $\varphi$). Then, $\theta$, $\varphi$ and $\theta^*$ have canonical extensions $\widehat{\theta}$, $\widehat{\varphi}$, $\widehat{\theta^*}$ to $\Irr(NQ), \Irr((S \cap N)Q), \Irr(\CB_N(Q)Q)$, respectively. 
	
	By \Cref{TurullLemma}, there exist character triple isomorphisms 
	\begin{equation*}
		(T, N, \theta) \cong (\NB_T(Q), \CB_N(Q), \theta^*) \text{ and } (S_\varphi, S \cap N, \varphi) \cong (\NB_{S_\varphi}(Q), \CB_{S \cap N}(Q), \theta^*).
	\end{equation*} 
	But notice how, since $S_\varphi = T \cap S$ and $\NB_G(Q) \subseteq S$, then $\NB_{S_\varphi}(Q) = \NB_T(Q)$; also, for similar reasons, $\CB_{S \cap N}(Q) = \CB_N(Q)$. Since $p$ is odd, these isomorphisms send $\widehat{\theta}$ and $\widehat{\varphi}$ to $\widehat{\theta^*}, \widehat{\varphi^*}$, respectively, by \Cref{TurullLemma}. Composing both, we get a character triple isomorphism $(T, N, \theta) \cong (S_\varphi, S \cap N, \varphi)$ that sends $\widehat{\theta}$ to $\widehat{\varphi}$.
	
	Denote the character triple isomorphism constructed above by $\dagger$ and let $\pi_y$ denote the $\langle y \rangle$-Glauberman correspondence, where $y \in Q$ is such that $\CB_N(y) \leq N \cap S$. From \Cref{inertiaGroupLemma}, it follows that $\varphi = f_y(\theta)$. Since character triple isomorphisms preserve character ratios, we have that $\chi^\dagger(1)_p = \chi(1)_p$ for all $\chi \in \Irr(T \mid \theta)$, using that $N$ is a $p'$-group. 
	
	For the values on $y$, $\widehat{\theta}_{N\langle y \rangle}$ is the canonical extension of $\theta$ to $N \langle y \rangle$ (similarly for $\varphi$). Hence, $\widehat{\theta}(y) = \epsilon_1 \pi_y(\theta)(1)$, where $\epsilon_1$ is the unique sign congruent to $[\theta_{\CB_N(y)}, \pi_y(\theta)]$ modulo $p$, by \cite[Theorem 13.6]{CTFG}. Since the $\langle y \rangle$-Glauberman correspondents of $\theta$ and $\varphi$ coincide, it follows that $\widehat{\varphi}(y) = \epsilon_2 \pi_y(\theta)(1)$, where $\epsilon_2$ is the unique sign such that $[\varphi_{\CB_N(x)}, \pi_y(\theta)]$ modulo $p$. By \Cref{correspondence}, $\epsilon := \epsilon_1\epsilon_2 \equiv [\theta_{S \cap N}, \varphi] \pmod{p}$.
	
	Now, if $\chi \in \Irr(T \mid \theta)$, write $y^\dagger$ for an element of $S_\varphi$ such that $y^\dagger (S \cap N) = (yN)^\dagger$. Notice that, since $\dagger$ is associated to the natural map $gN \mapsto g(S \cap N)$, we can take $y^\dagger = y$. Then, by the proof of \cite[Lemma 5.17]{CTMcKC}, we have
	\begin{equation*}
		\chi(y) = \frac{\widehat{\theta}(y)}{\widehat{\theta}^\dagger(y)} \chi^\dagger(y) = \frac{\widehat{\theta}(y)}{\widehat{\varphi}(y)} \chi^\dagger(y) = \epsilon \chi^\dagger(y).
	\end{equation*}
	
	Now, by the Clifford correspondence applied twice, we are able to define a bijection $F: \Irr(G \mid \theta) \to \Irr(S \mid \varphi)$ such that $F(\psi^G) = (\psi^\dagger)^S$. On the one hand, since $G = N\NB_G(P)$ and $N \cap \NB_G(P) = \CB_N(P)$, each element of $g$ can be written as a product $ab$, with $a \in N$, $b \in \NB_G(P)$ in exactly $|\CB_N(P)|$ ways. Consequently, we have:
	\begin{align*}
		\psi^G(y) &= \frac{1}{|T|} \sum_{\substack{g \in G \\ gyg^{-1} \in T}} \psi(gyg^{-1}) = \frac{1}{|T||\CB_N(P)|} \sum_{n \in N}\sum_{g \in \NB_G(P)} \psi^\circ(ngyg^{-1}n^{-1}) \\
		&= \frac{1}{|T:N||\CB_N(P)|}\sum_{\substack{g \in \NB_G(P) \\ gyg^{-1} \in T}} \psi(gyg^{-1}).
	\end{align*}
	Notice how, if $g \in \NB_G(P)$, since $y \in Q \subseteq P$, then $gyg^{-1} \in P$. Thus, $gyg^{-1} \in T$ if and only if $gyg^{-1} \in Q$. Since $\CB_N(y) \leq N \cap S$ and $\NB_G(P) \subseteq S$, we have $\CB_N(gyg^{-1}) \leq N \cap S$ as well. Then, by what we proved above, we have
	\begin{equation*}
		\psi^G(y) = \frac{\epsilon}{|T:N||\CB_N(P)|}\sum_{\substack{g \in \NB_G(P) \\ gyg^{-1} \in Q}} \psi^\dagger(gyg^{-1}).
	\end{equation*}
	At the same time, using the previous reasoning and the fact that $S = (N \cap S)\NB_G(P)$,
	\begin{align*}
		\epsilon(\psi^\dagger)^S(y) &= \frac{\epsilon}{|S_\varphi|}\sum_{\substack{g \in S \\ gyg^{-1} \in S_\varphi}} \psi^\dagger(gyg^{-1}) = \frac{\epsilon}{|S_\varphi||\CB_N(P)|} \sum_{n \in N \cap S} \sum_{g \in \NB_G(P)} (\psi^\dagger)^\circ(ngyg^{-1}n^{-1}) \\
		&= \frac{\epsilon}{|S_\varphi: S \cap N||\CB_N(P)|} \sum_{\substack{g \in \NB_G(P) \\ gyg^{-1} \in S_\varphi}} \psi^\dagger(gyg^{-1}).
	\end{align*}
	But notice that $|T:N| = |S_\varphi:S\cap N|$ and that $gyg^{-1} \in S_\varphi$ if and only if $gyg^{-1} \in S_\varphi \cap P$, for $g \in \NB_G(P)$. But $S_\varphi \cap P = T \cap S \cap P = Q$, since $P \subseteq S$. Comparing both equalities thus yields $\psi^G(y) = \epsilon F(\psi^G)$, as desired. The part on $p$-parts of degrees follows without much issue, just as in \Cref{normalPComplement}.
\end{proof}

Now, we are able to prove Theorem B, which we restate below.

\begin{thmB}
	Let $G$ be a $p$-solvable group of $p$-length one, where $p$ is an odd prime, let $x \in G$ be a $p$-element and let $S = \Sub_G(x)$. Then, there exists a bijection $$F: \Irr^x(G) \to \Irr^x(S)$$ such that $F(\chi)(1)_p = \chi(1)_p$ and $F(\chi)(x) = \epsilon_\chi\chi(x)$ for a sign $\epsilon_\chi \in \{1, -1\}$.
\end{thmB}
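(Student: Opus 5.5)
We will mirror the proof of \Cref{SubConjNormalPComplement}, replacing \Cref{normalPComplement} by \Cref{ThmBKey}. Write $N = \OB_{p'}(G)$, fix $P \in \Syl_p(G)$ containing $x$, and set $S = \Sub_G(x)$. By the description of the subnormalizer in the lemma preceding \Cref{ThmBKey}, $S = \langle \NB_G(P), \CB_N(x)\rangle$. The Frattini argument gives $G = N\NB_G(P)$, and hence $S = (S\cap N)\NB_G(P)$. The group $S\cap N$ is a normal $p'$-subgroup of $S$ containing $\CB_N(x)$. Moreover $S$ has $p$-length one with $\OB_{p'}(S) \supseteq S\cap N$: indeed $S/(S\cap N) \cong G/N$ has a normal Sylow $p$-subgroup.

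First, by \Cref{nonVanishingInvarConst}, every $\chi \in \Irr^x(G)$ lies over some $\theta \in \Irr_{\langle x\rangle}(N)$. Similarly every $\xi \in \Irr^x(S)$ lies over some $\langle x\rangle$-invariant character of $S\cap N$. Partition $\Irr_{\langle x\rangle}(N)$ into classes of $G$-conjugacy. Since $G = N\NB_G(P)$ and conjugation by $N$ fixes each $\theta$, we may take the conjugating elements in $\NB_G(P) \subseteq S$. In the same way, partition $\Irr_{\langle x\rangle}(S\cap N)$ into $S$-conjugacy classes, again realised by $\NB_G(P)$. This gives
\[
\Irr^x(G) = \bigsqcup_{\theta\in\Delta}\Irr^x(G\mid\theta), \qquad \Irr^x(S) = \bigsqcup_{\varphi\in\Xi}\Irr^x(S\mid\varphi).
\]

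Next we show that $f_x$ (from \Cref{relativeCorrespondence}, here simply \Cref{correspondence}, since $\CB_N(x) \le S\cap N$) induces a bijection between these classes. Suppose $\theta_2 = \theta_1^g$ with $g \in \NB_G(P)$ and both characters $x$-invariant. Then $gxg^{-1}$ fixes $\theta_1$, and $\langle x\rangle {\triangleleft}{\triangleleft} \NB_G(P)$ because $\langle x\rangle {\triangleleft}{\triangleleft} P \unlhd \NB_G(P)$. Applying \Cref{inertiaGroupLemma} with $L = 1$, $K = N$, $H = S$ and $A = \NB_G(P) \le \NB_G(S\cap N)$ gives $\NB_G(P)_{\theta_1} = \NB_G(P)_{f_x(\theta_1)}$. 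Hence $f_x(\theta_1)^g$ is $x$-invariant. It is also a constituent of $(\theta_2)_{S\cap N}$ with multiplicity prime to $p$, so uniqueness gives $f_x(\theta_2) = f_x(\theta_1)^g$. The reverse direction uses the induction half of \Cref{correspondence} in the same way, so we may take $\Xi = f_x(\Delta)$.

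It then suffices to produce, for each $\theta \in \Delta$, a bijection $\Irr^x(G\mid\theta) \to \Irr^x(S\mid f_x(\theta))$ that preserves $p$-parts of degrees and values at $x$ up to sign. \Cref{ThmBKey} supplies a bijection $F\colon \Irr(G\mid\theta)\to\Irr(S\mid\varphi)$ with $F(\chi)(y) = \epsilon\chi(y)$ for every $y \in P_\theta$ with $\CB_N(y) \le N\cap S$. Since $\theta$ is $x$-invariant, $x \in P_\theta$ (after choosing the Sylow $Q$ of $T$ containing $x$), and $\CB_N(x) \le S\cap N$. So $F(\chi)(x) = \pm\chi(x)$, and $F$ preserves nonvanishing at $x$; it therefore restricts to the required bijection on $\Irr^x$. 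The main obstacle is the class-matching step: one must ensure that the correspondence for $G$-conjugates chosen within $\NB_G(P)$ is compatible with $S$-conjugacy in $S\cap N$. This hinges on \Cref{inertiaGroupLemma} applying with $A = \NB_G(P)$, which requires checking that $\NB_G(P)$ normalizes $S\cap N$. That holds because $\NB_G(P) \le S$ and both $S$ and $N$ are normalized by $\NB_G(P)$.
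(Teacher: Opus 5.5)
Your proposal is correct and follows the paper's proof. Like the paper, you partition $\Irr^x(G)$ and $\Irr^x(S)$ by conjugacy classes of $\langle x\rangle$-invariant characters of $N$ and $S\cap N$ via \Cref{nonVanishingInvarConst}, match the classes through $f_x$ using \Cref{inertiaGroupLemma}, and then apply \Cref{ThmBKey} class by class; your explicit choice of conjugating elements in $A=\NB_G(P)$ (rather than in $P$, as in \Cref{SubConjNormalPComplement}) is exactly the adaptation that the paper's ``same argument'' implicitly requires.
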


\begin{proof}
	Let $N = \OB_{p'}(G)$ and let $D = N \cap S$. We may partition the sets $\Irr_{\langle x \rangle}(N)$ and $\Irr_{\langle x \rangle}(D)$ by conjugation as in \Cref{SubConjNormalPComplement} and, using \Cref{nonVanishingInvarConst} just as we did then, we get partitions
	\begin{equation*}
		\Irr^x(G) = \bigsqcup_{\theta \in \Delta} \Irr^x(G \mid \theta)
	\end{equation*}
	and
	\begin{equation*}
		\Irr^x(S) = \bigsqcup_{\varphi \in \Xi} \Irr^x(S \mid \varphi),
	\end{equation*}
	where $\Delta, \Xi$ are complete sets of representatives of the partitions. By the same argument as in \Cref{SubConjNormalPComplement}, we can take $\Xi = \{f_x(\theta) \mid \theta \in \Delta\}$. The result thus follows from \Cref{ThmBKey}.
\end{proof}

\section{Proof of Theorem A}
\label{Section 5}

We now strive towards a proof of Theorem A, by removing the hypothesis on the $p$-length of $G$ and following similar guidelines as in \cite{PickyPSolvable}. We do this under the condition that $\langle x \rangle {\triangleleft}{\triangleleft} \Sub_G(x)$, which, as mentioned before, forces $\Sub_G(x) = S_G(x)$, the subnormalizer subset of $x$. This allows us to control inertia groups of characters corresponding under \Cref{relativeCorrespondence}.

Let us assume, for the moment, that the following result holds.

\begin{thm}
	\label{keyResult}
	Let $G$ be a finite group, $p$ and odd prime, $L \leq K \leq G$ such that $L, K \lhd G$ and $K/L$ is a $p'$-group. Suppose $x \in G$ is a $p$-element such that $\langle xL \rangle {\triangleleft}{\triangleleft} \Sub_{G/L}(xL)$. Writing $H/L = \Sub_{G/L}(xL)$, suppose that $G = KH$ and write $D = K \cap H$. Let $\theta \in \Irr_{\langle x \rangle}(K)$ and let $f_x(\theta) = \varphi \in \Irr_{\langle x \rangle}(D)$ be the correspondent of $\theta$ via \Cref{relativeCorrespondence}. Then, there exists a bijection
	\begin{equation*}
		F: \Irr(G \mid \theta) \to \Irr(H \mid \varphi)
	\end{equation*}
	such that $F(\chi)(1)_p = \chi(1)_p$ and $\chi(y) = \epsilon F(\chi)(y)$ for all $p$-elements $y$ such that $\Sub_{G/L}(yL) = \Sub_{G/L}(xL)$ and $\langle yL \rangle {\triangleleft}{\triangleleft} \Sub_{G/L}(yL)$, where $\epsilon$ is the unique sign such that $[\theta_D, \varphi] \equiv \epsilon \pmod{p}$.
\end{thm}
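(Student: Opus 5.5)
We mimic the proof of \Cref{ThmBKey}, now in the relative setting over $L$. By \Cref{SubnormalizerProperties}(v) applied in $G/L$, the hypothesis $\langle xL\rangle {\triangleleft}{\triangleleft} \Sub_{G/L}(xL)$ forces $H/L = \NB_{G/L}(Q/L)$. Here $Q/L$ is the intersection of the Sylow $p$-subgroups of $G/L$ containing $xL$, and $Q/L$ is a $p$-subgroup normal in $H/L$. In particular $L\langle x\rangle {\triangleleft}{\triangleleft} H$, and $H$ normalizes $D = K\cap H$. Moreover $\CB_{K/L}(xL)\le D/L$, since centralizing elements lie in $S_{G/L}(xL)$. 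Thus \Cref{relativeCorrespondence} applies to $\theta$, and \Cref{inertiaGroupLemma} (with $A = H$) gives $H_\theta = H_\varphi$. Let $T = G_\theta$. Since $G = KH$, we get $T = K H_\theta$ and $T\cap H = D H_\theta = H_\varphi$. Clifford correspondence then reduces the problem to a bijection $\Irr(T\mid\theta)\to\Irr(H_\varphi\mid\varphi)$, which we induce to $G$ and $H$.

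\textbf{Step 1: build $\dagger$.} Choose a $p$-subgroup $R\le H_\theta$ with $LR = Q\cap T$ (or $Q_\theta$), so that $KR \unlhd T$ and $DR\unlhd H_\varphi$. We have $\CB_{K/L}(R)\le D/L$, hence $f_R(\theta)=\varphi$ by \Cref{inertiaGroupLemma}, and their relative Glauberman correspondents agree. The plan is to use a relative version of \Cref{TurullLemma}, the Dade--Turull triple isomorphism for $(T,K,\theta)$ over $L$; if needed, first replace $(T,L,\theta_L)$ by an isomorphic triple with $L$ central, as in the proof of \Cref{relativeCorrespondence}. Applying it twice gives character triple isomorphisms
\[(T,K,\theta)\to(\NB_T(R)K\text{-local},\dots,\theta^*)\quad\text{and}\quad(H_\varphi,D,\varphi)\to(\dots,\theta^*).\]
The local groups coincide because $\NB_T(R)\le H$. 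Composing yields $\dagger\colon(T,K,\theta)\to(H_\varphi,D,\varphi)$ over the natural isomorphism $T/K\cong H_\varphi/D$, and it sends canonical extensions to canonical extensions. Since $p$ is odd, this holds for every relevant $p$-subgroup with the same normalizer.

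\textbf{Step 2: values and degrees.} Degree $p$-parts are preserved because $|K:D|$ is prime to $p$ and $|G:T| = |H:H_\varphi|$. Now take $y$ with $\Sub_{G/L}(yL)=H/L$ and $\langle yL\rangle{\triangleleft}{\triangleleft}H/L$. Then $\CB_{K/L}(yL)\le D/L$, so \Cref{inertiaGroupLemma} gives $f_y(\theta)=\varphi$ whenever $y\in T$. As in \Cref{ThmBKey}, \cite[Lemma 5.17]{CTMcKC} gives
\[\chi(y)=\frac{\widehat\theta(y)}{\widehat\varphi(y)}\chi^\dagger(y).\]
The ratio is $\epsilon$ by the relative analogue of \cite[Theorem 13.6]{CTFG} together with the multiplicativity congruence in \Cref{relativeCorrespondence}; it is independent of $y$. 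For the induced characters, write $G = KH$ and evaluate $\psi^G(y)$ by summing over $g\in H$. Every $H$-conjugate $y^g$ again satisfies the hypotheses, since $H$ normalizes $H/L=\Sub_{G/L}(yL)$ by \Cref{SubnormalizerProperties}(vi). Moreover, $y^g\in T$ iff $y^g\in H_\varphi$. Comparing with $(\psi^\dagger)^H(y)$ term by term gives $\psi^G(y)=\epsilon(\psi^\dagger)^H(y)$.

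\textbf{Main obstacle.} The hard step is Step 1. \Cref{TurullLemma} is stated for $K$ of $p'$-order, not for $K/L$. We must also arrange a single $p$-subgroup $R$ whose normalizer lies in $H$ and which controls every admissible $y$; the relevant $y$ need not lie in $R$, only in the same subnormalizer. The first approach to try is a reduction to $L$ central via the character triple argument of \Cref{relativeCorrespondence}, with $\theta_L$'s constituent extended to $T\rtimes$ and $L = L_p\times L_{p'}$. After that we apply \Cref{TurullLemma} to the $p'$-part and check that canonical extensions are respected.
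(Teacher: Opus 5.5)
There is a genuine gap, and it lies exactly where you put the main obstacle: the reduction to $L$ central. A (strong) character triple isomorphism that makes $L$ central requires an irreducible $\xi\in\Irr(L)$ that is invariant in the ambient group. But $(T,L,\theta_L)$ is not a character triple, and in general no irreducible constituent of $\theta_L$ is $T$-invariant. All you get is $T=KT_\xi$, because these constituents are $K$-conjugate.

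Citing the proof of \Cref{relativeCorrespondence} does not close this. There only multiplicities had to be controlled, and they pass through the Clifford correspondence for free. Here you must pass to the stabilizer of an $\langle x\rangle$-invariant constituent $\xi$ of $\varphi_L$. You then have to show that the Clifford correspondents $\theta_\xi,\varphi_\xi$ still correspond under $f_x$ with the same sign, and that the subnormalizer hypotheses descend to $T_\xi/L$. Above all, you must recover $\chi(y)=\epsilon F(\chi)(y)$ after inducing back from $T_\xi$ and $(H_\varphi)_\xi$.

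An admissible $y$ need not fix $\xi$. The paper (\Cref{Part3}) therefore replaces $\xi$ by a $y$-invariant conjugate $\xi^c$ with $c\in D$. It then proves $\{k\in K\mid kyk^{-1}\in G_{\xi^c}\}=K_{\xi^c}\,\{d\in D\mid dyd^{-1}\in H_{\xi^c}\}$, with each element written in exactly $|D_{\xi^c}|$ ways. This factorization uses the hypotheses on $y$ again:
\begin{itemize}
\item $K\langle y\rangle{\triangleleft}{\triangleleft}G$ forces $K_{\xi^c}\langle y\rangle/K_{\xi^c}$ into every Sylow $p$-subgroup of $G_{\xi^c}/K_{\xi^c}$;
\item normalizers of $p$-subgroups of $G/L$ containing $yL$ lie in $\Sub_{G/L}(yL)=H/L$, which is what moves the conjugating element into $D$.
\end{itemize}
Nothing in your proposal plays this role, and it is the technical core of the argument. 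Only after it do you have a $T$-invariant $\xi$ under $\varphi$, so that the strong-isomorphism step of \Cref{Part2} applies (there values are transported through an extension of $\xi$ to $L\langle y\rangle$).

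The same gap affects Step 2 as written. When $L$ is not central (and $|L|$ may be divisible by $p$), $\theta$ need not extend to $K\langle y\rangle$ at all. So $\widehat\theta$, $\widehat\varphi$ and a ``relative analogue'' of \cite[Theorem 13.6]{CTFG} are not available. The sign computation only makes sense once $L$ is central and $K=X\times L_p$, working with canonical extensions of $\theta_{p'}$.

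From that point on your plan is sound, and in fact does more than needed. Then $\langle x\rangle$ is subnormal in the subnormalizer, so \Cref{SubnormalizerProperties}(v) makes the subnormalizer a normalizer $\NB(Q)$ with $X\cap\NB(Q)=\CB_X(Q)=\CB_X(x)$. Hence a single application of \Cref{TurullLemma} suffices, with no composition of two isomorphisms. Its clause on $p$-subgroups with the same normalizer then handles every admissible $y$.

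Your opening Clifford reduction to $T=G_\theta$ and the final computation through $(\psi^G)_H=(\psi_{H_\theta})^H$ coincide with the paper's last step.
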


\begin{remark}
	Notice how $H = L\Sub_{G}(x)$ by \Cref{SubnormalizerProperties}. We have that $\CB_{K/L}(xL) \subseteq K/L$ and also, since $\CB_{G/L}(xL) \subseteq \Sub_{G/L}(xL)$, we have $\CB_{K/L}(x) \subseteq (K \cap H)/L = D/L$. Hence, the statement above is under the conditions of \Cref{relativeCorrespondence}, so that $f_x : \Irr_{\langle x \rangle}(K) \to \Irr_{\langle x \rangle}(D)$ is defined.
\end{remark}

Using \Cref{keyResult}, whose proof we postpone, we are able to prove Theorem A, which we restate below.

\begin{thmA}
	Let $G$ be a finite $p$-solvable group, where $p$ is an odd prime, and let $x \in G$ be a $p$-element such that $\langle x \rangle {\triangleleft}{\triangleleft} \Sub_G(x)$. Then, there exists a bijection
	\begin{equation*}
		F: \Irr^x(G) \to \Irr^x(\Sub_G(x))
	\end{equation*}
	such that $F(\chi)(1)_p = \chi(1)_p$ and $F(\chi)(x) = \epsilon_\chi \chi(x)$ for some $\epsilon_\chi \in \{1, -1\}$.
\end{thmA}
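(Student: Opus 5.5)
We argue by induction on $|G|$, using \Cref{keyResult} to pass down a $p'$-section and reduce to a proper subgroup. Write $S = \Sub_G(x)$. If $S = G$ there is nothing to prove (take $F$ the identity), so assume $S < G$. Since $G$ is $p$-solvable, consider $K = \OB_{p'}(G)$. If $K = 1$, then $P_0 = \OB_p(G) \neq 1$. Every Sylow $p$-subgroup contains $P_0$, so $\langle x \rangle P_0$ is a $p$-subgroup; moreover $\langle x\rangle {\triangleleft}{\triangleleft} \langle x\rangle P_0 \unlhd \langle x, g\rangle P_0$ fails in general, so this case must instead be handled by a suitable normal series. Accordingly, in general we choose a chief-like pair $L \leq K$ of normal subgroups with $K/L$ a nontrivial $p'$-group. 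Take $L = \OB_{p'p}(G)$-type terms, or more simply the last $p'$-factor of a $p$-series, arranged so that $x$ behaves well modulo $L$.

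The cleanest route is $L = 1$ and $K = \OB_{p'}(G)$ when $K \ne 1$. In this case the hypotheses of \Cref{keyResult} hold, since $G = KS$ by \Cref{SubnormalizerProperties}(iv) applied to $K\langle x\rangle$, which is subnormal in $G$ because $K\langle x\rangle/K$ is a subnormal $p$-subgroup of $G/K$ when $\OB_p(G/K) \neq 1$. If necessary we use the variant of (iv) for $N \unlhd G$ with $x \in N\,$ replaced by $G = K\,\Sub_G(x)$, which follows from a Frattini argument on the Sylows containing $x$ together with (iii).

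We then partition $\Irr^x(G)$ by $\langle x\rangle$-invariant constituents $\theta \in \Irr_{\langle x \rangle}(K)$ up to conjugacy, using \Cref{nonVanishingInvarConst}. The analogous partition of $\Irr^x(S)$ runs over $\varphi \in \Irr_{\langle x\rangle}(K\cap S)$. To match the classes via $f_x$, we argue exactly as in \Cref{SubConjNormalPComplement}, replacing $P$ there by $S$ and using \Cref{inertiaGroupLemma} with $A = S$. This is legitimate because $\langle x \rangle {\triangleleft}{\triangleleft} S$, and it yields $S_\theta = S_\varphi$ and $f_x(\theta^g) = f_x(\theta)^g$. \Cref{keyResult} then gives bijections $\Irr(G\mid\theta) \to \Irr(S \mid \varphi)$ preserving $p$-parts of degrees and values at $x$ up to the sign $\epsilon$, applied with $y = x$. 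These bijections restrict to $\Irr^x$, and gluing them over representatives gives $F$.

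The main obstacle is the case $\OB_{p'}(G) = 1$, where no $p'$-normal subgroup is available at the bottom. Here we would use $L = \OB_p(G)$ and $K = \OB_{p'p}(G)$-style layers, together with the quotient hypothesis $\langle xL\rangle {\triangleleft}{\triangleleft} \Sub_{G/L}(xL)$, which follows from (iii) and the subnormality of $\langle x\rangle L/L$. In the fully degenerate case $K = L$, i.e.\ $G$ a $p$-group modulo nothing, we have $S = G$. The worry is ensuring that $H = L\,\Sub_G(x)$ equals $S$, i.e.\ $L \le S$. This holds because $L$ is a normal $p$-subgroup lying in every Sylow, hence in $\NB_G(Q)$ for $Q$ as in (v), so $L \le S$. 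Finally, we check that $\Irr(S\mid\varphi)$ with $\varphi \in \Irr(D)$ and $D = K\cap S$ exhausts $\Irr^x(S)$ through \Cref{nonVanishingInvarConst}, applied to $D \unlhd S$.
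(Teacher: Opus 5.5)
There is a genuine gap, and it is at the step you treat as routine: choosing the $p'$-section $K/L$ so that $G = KH$. Your ``cleanest route'' takes $L=1$, $K=\OB_{p'}(G)$ and claims $G=KS$, because $K\langle x\rangle/K$ is supposedly subnormal in $G/K$ whenever $\OB_p(G/K)\neq 1$. That does not follow. A $p$-subgroup is subnormal only if it lies in $\OB_p$, and nothing puts $xK$ in $\OB_p(G/K)$.

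In fact, under the hypothesis $\langle x\rangle{\triangleleft}{\triangleleft}S$, one has $G=\OB_{p'}(G)S$ if and only if $x\in\OB_{p'p}(G)$: if $G=KS$, then $\langle xK\rangle$ is subnormal in $SK/K=G/K$. For a picky $x$, where $S=\NB_G(P)$, the equality would force $\OB_{p'}(G)P\unlhd G$, i.e.\ $p$-length one. No ``Frattini variant'' of \Cref{SubnormalizerProperties}(iv) rescues this. Your fallback $L=\OB_p(G)$, $K=\OB_{pp'}(G)$ hits the same obstruction one layer up, and ``arranged so that $x$ behaves well modulo $L$'' leaves the crucial choice unspecified.

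Moreover, you insist on $L\le S$ so that $H=L\Sub_G(x)=S$, i.e.\ you try to reach $S$ with a single application of \Cref{keyResult}. The induction on $|G|$ you announce is then never used. Nothing guarantees a single section with $G=KS$ and $L\le S$ exists; for picky $x$ it would already force $p$-length at most two, since $L\le\NB_G(P)$ is $p$-closed and $G/L$ has $p$-length at most one.

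The paper chooses the section from the top instead. Set $L_0=\OB^{p'}(G)$, $K_i=\OB^p(L_{i-1})$, $L_i=\OB^{p'}(K_i)$, and let $k$ be least with $G\neq L_kS$. Such $k$ exists because some $L_j$ is a $p$-group, hence lies in $S$. Then $K_kQ=L_{k-1}\langle x\rangle$, where $Q$ is a Sylow $p$-subgroup of $L_{k-1}\langle x\rangle$ containing $x$, and this subgroup is subnormal in $G$. For $k=1$ this holds because $x\in L_0\unlhd G$. For $k>1$ it holds because $G=L_{k-1}S$ and $\langle x\rangle{\triangleleft}{\triangleleft}S$; this is where the hypothesis does its real work. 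So \Cref{SubnormalizerProperties}(iv) gives $G=K_kS$.

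One then applies \Cref{keyResult} with $L=L_k$, $K=K_k$ and $H=L_kS$, so that $H/L=\Sub_{G/L}(xL)$ by (iii). Combined with exactly your partition-and-glue step via \Cref{nonVanishingInvarConst} and \Cref{inertiaGroupLemma}, this gives a bijection $\Irr^x(G)\to\Irr^x(H)$. Since $H$ is a proper subgroup with $\Sub_H(x)=S$ by (ii), the inductive hypothesis supplies $\Irr^x(H)\to\Irr^x(S)$. What your proposal is missing is this top-down choice of section and the two-stage use of induction, in which $H$ may be strictly larger than $S$.
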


\begin{proof}
	We may assume that $\langle x \rangle$ is not subnormal in $G$, as otherwise, $G = \Sub_G(x)$ and the result is trivial. We now proceed by induction on $|G|$. Let $L_0 = \OB^{p'}(G)$ and define inductively $K_i = \OB^p(L_{i-1})$, $L_i = \OB^{p'}(K_i)$. By construction, $K_i/L_i$ is a $p'$-group for all $i$. Also, since $G$ is $p$-solvable, there exists some $j$ such that $L_j$ is a (possibly trivial) $p$-group. Then, $L_j \langle x \rangle$ is a $p$-group containing $x$, meaning $L_j\langle x \rangle \leq \Sub_G(x)$ by \Cref{SubnormalizerProperties}. In particular, $L_j \leq \Sub_G(x)$ and $G \neq L_j \Sub_G(x) = \Sub_G(x)$. Thus, there exists $k = \min \{1 \leq i \leq j \mid G \neq L_i \Sub_G(x)\}$.
	
	Let $Q \in \Syl_p(L_{k-1}\langle x \rangle)$ contain $x$. Then, by construction, $K_k Q = L_{k-1}\langle x \rangle$. If $k = 1$, then $L_{k-1} = L_0$ contains $x$ and $L_0 \unlhd G$. If $k \neq 1$, then $G = L_{k-1}\Sub_G(x)$ and, since $\langle x \rangle {\triangleleft}{\triangleleft} \Sub_G(x)$, it follows that $L_{k-1}\langle x \rangle {\triangleleft}{\triangleleft} L_{k-1}\Sub_G(x) = G$. Thus, in any case, $K_kQ {\triangleleft}{\triangleleft} G$. By \Cref{SubnormalizerProperties}, it follows that $G = K_k \Sub_G(x)$, since $x \in Q$. Then, $G = K_k H$, where $H = L_k \Sub_G(x)$. Since $H \neq G$ by the definition of $k$, by our inductive hypothesis, there exists a bijection $\hat{F} : \Irr^x(H) \to \Irr^x(\Sub_G(x))$ satisfying the conditions we want. So, it suffices to construct a suitable bijection $\Irr^x(G) \to \Irr^x(H)$.
	
	Partition the set $\Irr_{\langle x \rangle}(K_k)$ according to the equivalence relation $\theta \sim \eta$ if there exists $g \in G$ such that $\eta^g = \theta$ and let $\Delta$ be a complete set of representatives of this partition. By \Cref{nonVanishingInvarConst}, there is an induced decomposition
	\begin{equation*}
		\Irr^x(G) = \bigsqcup_{\theta \in \Delta} \Irr^x(G \mid \theta).
	\end{equation*}
	Doing the same to $\Irr_{\langle x \rangle}(D)$, where $D = K_k \cap H$ (using $H$-conjugation instead), we get another decomposition
	\begin{equation*}
		\Irr^x(H) = \bigsqcup_{\eta \in \Xi} \Irr^x(H \mid \eta).
	\end{equation*}
	
	Let $\theta_1, \theta_2 \in \Irr_{\langle x \rangle}(K_k)$ and suppose there exists $g \in G$ such that $\theta_2 = \theta_1^g$. Since $G = K_k \Sub_G(x)$, then we may assume that $g \in \Sub_G(x)$. Then, since both $\theta_1$ and $\theta_2$ are $\langle x \rangle$-invariant, it follows that $\theta_1^{gxg^{-1}} = \theta_2^{g^{-1}} = \theta_1$, meaning $gxg^{-1} \in \Sub_G(x)_{\theta_1}$. By \Cref{inertiaGroupLemma}, $f_x(\theta_1)^{gxg^{-1}} = f_x(\theta_1)$, which is equivalent to saying that $f_x(\theta_1)^g$ is $\langle x \rangle$-invariant. But it is an irreducible constituent of $(\theta_2)_D$ and $[(\theta_2)_D, f_x(\theta_1)^g] = [(\theta_1)_D, f_x(\theta_1)] \not \equiv 0 \pmod{p}$. By the uniqueness of \Cref{relativeCorrespondence}, $f_x(\theta_1)^g = f_x(\theta_2)$. 
	
	The same argument done in reverse shows that if $\varphi_1, \varphi_2 \in \Irr_{\langle x \rangle}(D)$ are $H$-conjugate, then so are their correspondents in $\Irr_{\langle x \rangle}(K_k)$. Consequently, we may take $\Xi = \{f_x(\theta) \mid \theta \in \Delta\}$. By \Cref{keyResult}, using the decompositions above we obtain a bijection $\Irr^x(G) \to \Irr^x(H)$ satisfying our desired conditions. This finishes the proof.
\end{proof}

So we are left with proving \Cref{keyResult}. We do so in a series of steps, following the general outlines of \cite{PickyPSolvable}, and dropping the extra hypotheses as we go. For our first step, we assume that $L \subseteq \ZB(G)$ and that $\theta$ is, in fact, $G$-invariant. 

The key observation for the proof of \Cref{Part1} is part (v) of \Cref{SubnormalizerProperties}, which allows us to replace $\Sub_G(x)$ by a normalizer $\NB_G(Q)$. This is crucial, in that it allows us to use \Cref{TurullLemma} and all the properties of the character triple isomorphism that it yields. Perhaps it could be the case that an analogous character triple isomorphism would hold in similar circumstances, without the hypothesis $\Sub_G(x) = S_G(x)$, as was also suggested by the proof of \Cref{ThmBKey}. As of writing, this seems unclear.

\begin{thm}
	\label{Part1}
	Let $G$ be a finite group, $p$ and odd prime, $L \leq K \leq G$ such that $L, K \lhd G$, $K/L$ is a $p'$-group and $L \subseteq \ZB(G)$. Suppose $x \in G$ is a $p$-element such that $\langle xL \rangle {\triangleleft}{\triangleleft} \Sub_{G/L}(xL)$. Writing $H/L = \Sub_{G/L}(xL)$, suppose also that $G = KH$ and write $D = K \cap H$. Suppose $\theta \in \Irr(K)$ is $G$-invariant and let $f_x(\theta) = \varphi \in \Irr(D)$ be the correspondent of $\theta$ as in \Cref{relativeCorrespondence}. Then, there exists a bijection
	\begin{equation*}
		F: \Irr(G \mid \theta) \to \Irr(H \mid \varphi)
	\end{equation*}
	such that $F(\chi)(1)_p = \chi(1)_p$ and $\chi(y) = \epsilon F(\chi)(y)$ for all $p$-elements $y$ such that $\langle yL \rangle {\triangleleft}{\triangleleft} \Sub_{G/L}(yL)$ and $\Sub_{G/L}(yL) = H/L$, where $\epsilon$ is the unique sign such that $[\theta_D, \varphi] \equiv \epsilon \pmod{p}$.
\end{thm}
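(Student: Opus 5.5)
We will reduce to a normalizer situation by means of part (v) of \Cref{SubnormalizerProperties}. Since $\langle xL \rangle {\triangleleft}{\triangleleft} \Sub_{G/L}(xL)$, that part gives $H/L = \NB_{G/L}(\bar Q)$, where $\bar Q$ is the intersection of the Sylow $p$-subgroups of $G/L$ containing $xL$. Because $L$ is central, $L = L_p \times L_{p'}$. Let $Q$ be the Sylow $p$-subgroup of the full preimage of $\bar Q$. This preimage is $L\bar Q$, which is nilpotent since $L$ is central, so $Q$ is characteristic in it and $Q \lhd H$. Moreover $H = \NB_G(Q)$ and $D = K \cap \NB_G(Q)$.

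We next pass to a $p'$-group on which $Q$ acts. We have $K = K_{p'} \times L_p$ with $K_{p'}$ a normal $p'$-subgroup of $G$; here $K_{p'}$ is a normal Hall subgroup since $K/L$ is a $p'$-group and $L$ is central. Write $\theta = \theta_0 \times \lambda$ with $\theta_0 \in \Irr(K_{p'})$, and note that $\theta_0$ is $G$-invariant. Then $\CB_{K_{p'}}(Q) \le D$, and the argument in the proof of \Cref{ThmBKey} shows that $f_Q(\theta_0)$ equals the $K_{p'}$-part of $\varphi$. Consequently the $Q$-Glauberman correspondent $\theta_0^*$ of $\theta_0$ is also that of $\varphi_0$. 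We then apply \Cref{TurullLemma} twice, with $N = K_{p'}$ and $P = Q$, where $K_{p'}Q \unlhd G$ because $K_{p'}Q/K_{p'}$ is normal in $G/K_{p'}$. One application is to $(G,K_{p'},\theta_0)$, and the other, inside $H$, is to $(H, D_{p'}, \varphi_0)$. Both land in the same triple $(\NB_G(Q), \CB_{K_{p'}}(Q), \theta_0^*)$, since $\NB_H(Q) = \NB_G(Q)$. Composing one with the inverse of the other gives a character triple isomorphism $(G,K_{p'},\theta_0) \to (H,D_{p'},\varphi_0)$ along the natural isomorphism $G/K_{p'} \cong H/D_{p'}$ coming from $G = KH$. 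Tensoring with $\lambda$ on $L_p$, which is central, yields the map $\dagger$ and hence $F(\chi) = \chi^\dagger$. This $F$ preserves $\chi(1)_p$ because the degree ratios $\chi(1)/\theta(1)$ are preserved and $\theta(1)$, $\varphi(1)$ are $p'$-numbers.

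For the value condition, let $y$ be a $p$-element with $\langle yL\rangle {\triangleleft}{\triangleleft} \Sub_{G/L}(yL) = H/L$. Applying part (v) again shows that $\NB_G(Q_y) = H = \NB_G(Q)$, where $Q_y$ is the analogous subgroup attached to $y$. The supplementary statement of \Cref{TurullLemma} then says that the isomorphisms send the canonical extension of $\theta_0$ to $K_{p'}Q_y$ to the canonical extension of $\theta_0^*$, and likewise for $\varphi_0$. Hence, restricting to $K_{p'}\langle y\rangle$, the composed $\dagger$ sends $\widehat{\theta_0}$ to $\widehat{\varphi_0}$ there. Arguing as in the proof of \cite[Lemma 5.17]{CTMcKC} (as in \Cref{ThmBKey}), we get
$\chi(y) = \frac{\widehat{\theta}(y)}{\widehat{\varphi}(y)}\chi^\dagger(y)$, where we may take $y^\dagger = y$. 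Now $\widehat\theta(y) = \epsilon_1 \pi_y(\theta_0)(1)\lambda(y_L)$ and $\widehat\varphi(y) = \epsilon_2\pi_y(\varphi_0)(1)\lambda(y_L)$ by \cite[Theorem 13.6]{CTFG}. Here $y$ lies in $L_pQ_y$, so we must handle its $L_p$-component $y_L$ through $\lambda$, which appears symmetrically on both sides. Since $\CB_{K/L}(y)\le D/L$, \Cref{inertiaGroupLemma} gives $\varphi = f_y(\theta)$, and therefore $\pi_y(\theta_0) = \pi_y(\varphi_0)$. Finally \Cref{correspondence} gives $\epsilon_1\epsilon_2 \equiv [\theta_D,\varphi] \pmod p$, a sign independent of $y$.

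The main obstacle is the bookkeeping with the central $p$-part of $L$ and the choice of $Q$. Specifically, we must check that $D$ contains $\CB_K(Q)$, so that $f_Q$ is defined, and that $y$ indeed lies in (a conjugate within $H$ of) $L_pQ_y$ with the relevant normalizers coinciding. We must also verify that composing the two Turull isomorphisms is compatible with the natural identification $G/K \cong H/D$, so that $y^\dagger = y$. We will first settle the case $L_p = 1$ and then reduce the general case to it via the direct product decomposition above.
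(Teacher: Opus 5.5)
Your proposal follows essentially the same route as the paper: split $K = K_{p'} \times L_p$, use part (v) of \Cref{SubnormalizerProperties} to get $H = \NB_G(Q)$, apply \Cref{TurullLemma} to $(G, K_{p'}, \theta_0)$ with $P = Q$, and compare values at $y$ through the canonical extensions attached to $Q_y$, using $\NB_G(Q_y) = \NB_G(Q)$. Two things to tighten. First, $D_{p'} = \NB_{K_{p'}}(Q) = \CB_{K_{p'}}(Q)$, so $\varphi_0 = \theta_0^*$ directly; one application of \Cref{TurullLemma} suffices, and $\epsilon_2 = 1$ because $y$ centralizes $D_{p'}$. Second, your ``tensoring with $\lambda$'' step is not automatic for an arbitrary character triple isomorphism, but it holds here because $L_p \le Q$: the canonical extension on $K_{p'}Q$ restricts to $\theta_0 \times 1_{L_p}$ on $K$, which forces $\theta_0 \times \lambda \mapsto \varphi_0 \times \lambda$. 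With that observation no separate reduction to $L_p = 1$ is needed, and $\lambda$ never enters the value computation.
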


\begin{proof}
	Since $L \subseteq \ZB(G)$, we may write $K = X \times L_p$, where $L_p \in \Syl_p(L)$ and $X$ is a normal $p$-complement of $K$. This also allows us to write $\theta = \theta_{p'} \times \theta_p$, with $\theta_{p'} \in \Irr(X)$ and $\theta_p \in \Irr(L_p)$. Since $\theta$ is $G$-invariant and $L \subseteq \ZB(G)$, $\theta_{p'}$ is also $G$-invariant. The centrality of $L$ also implies that $H = \Sub_G(x)$ by \Cref{SubnormalizerProperties} and that $\langle x \rangle \unlhd L \langle x \rangle$. At the same time, since $\langle xL \rangle {\triangleleft}{\triangleleft} H/L$, it follows that $L \langle x \rangle {\triangleleft}{\triangleleft} H$. Combining the two, $\langle x \rangle {\triangleleft}{\triangleleft} H$, meaning $H = \Sub_G(x) = S_G(x)$.
	
	Let $Q$ be the intersection of the Sylow $p$-subgroups containing $x$. By \Cref{SubnormalizerProperties}, $H = \NB_G(Q)$. Then, since $G = KH = XH$, we have $G = X\NB_G(Q)$; in particular, $XQ \unlhd G$. Notice how $L_p \subseteq Q$, since it is a normal $p$-subgroup of $G$. Also, $X \cap H = \NB_X(Q) = \CB_X(Q)$. At the same time, however, since $H = \Sub_G(x)$, $\CB_G(x) \subseteq H$, meaning $\CB_X(x) \subseteq X \cap H = \CB_X(Q)$. As $x \in Q$, we have the equality $\CB_X(Q) = \CB_X(x)$.
	
	Let $\pi_Q$ denote the $Q$-Glauberman correspondence. Since $\theta_{p'}$ is $G$-invariant, both $\pi_Q(\theta_{p'})$ and $\pi_x(\theta_{p'})$ are defined. By the equality of the previous paragraph, it follows that $\pi_Q(\theta_{p'}) = \pi_x(\theta_{p'})$, since $x \in Q$.
	
	Now, since $L \leq H$, then $D = H \cap K = (H \cap X) \times L_p$. This allows us to write $\varphi = \varphi_{p'} \times \varphi_p$, just as we did with $\theta$. Since $\pi_x(\theta_{p'}) \times \theta_p$ is an $\langle x \rangle$-invariant irreducible constituent of $\theta_D$ such that $[\theta_D, \pi_x(\theta_{p'}) \times \theta_p] = [(\theta_{p'})_{H \cap X}, \pi_x(\theta_{p'})] \not \equiv 0 \pmod{p}$, it follows by \Cref{relativeCorrespondence} that $\varphi = \pi_x(\theta_{p'}) \times \theta_p$; equivalently, that $\varphi_{p'} = \pi_x(\theta_{p'})$ and $\varphi_p = \theta_p$.
	
	By \Cref{inertiaGroupLemma}, using that $\langle x \rangle {\triangleleft}{\triangleleft} H$ and that $\theta$ is $G$-invariant, it follows that $\varphi$ is $H$-invariant, and so is $\pi_x(\theta_{p'})$. Then, $\theta_{p'}$ and $\pi_x(\theta_{p'})$ have canonical extensions $\tau_Q$ and $\nu_Q$ to $XQ, (X \cap H)Q$, respectively. In fact, since $X \cap H = \CB_X(Q)$, we have $\nu_Q = \pi_x(\theta_{p'}) \times 1_Q$.
	
	By \Cref{TurullLemma}, there exists a character triple isomorphism from $(G, X, \theta_{p'})$ to $(H, X \cap H, \pi_x(\theta_{p'}))$ which, as $p \neq 2$, sends $\tau_Q$ to $\nu_Q$. Denote this character triple isomorphism by $\dagger$. Arguing as in \cite[Theorem 3.1]{PickyPSolvable}, we see that $\dagger$ induces a bijection from $\Irr(G\mid\theta)$ to $\Irr(H \mid \varphi)$. We claim that this map satisfies our desired properties. First, if $\chi \in \Irr(G \mid \theta)$, then $\chi(1)_p = \chi^\dagger(1)_p$, since $X$ is a group of $p'$-order and character triple isomorphisms preserve degree ratios. 
	
	Now let $y$ be a $p$-element such that $\langle yL \rangle {\triangleleft}{\triangleleft} \Sub_{G/L}(yL)$ and $\Sub_{G/L}(yL) = H/L$. Just as was the case with $x$, it follows that $\langle y \rangle {\triangleleft}{\triangleleft} H$ and that $H = \Sub_G(y)$. Consequently, $H = \NB_G(P)$, where $P$ is the intersection of the Sylow $p$-subgroups containing $y$, and the $P$-Glauberman correspondent of $\theta_{p'}$ coincides with its $\langle y \rangle$-Glauberman correspondent. Let $\tau_P$, $\nu_P$ be the canonical extensions of $\theta_{p'}$, $\pi_y(\theta_{p'})$ to $XP$, $(H\cap X)P$, respectively. By \Cref{TurullLemma}, $\pi_y(\theta_{p'}) = \pi_x(\theta_{p'})$ and $\tau_P^\dagger = \nu_P$. Just as was the case with $Q$, $\nu_P = \pi_x(\theta_{p'}) \times 1_P$.
	
	Notice that ${\tau_P}_{X\langle y \rangle}$ is the canonical extension of $\theta_{p'}$ to $X \langle y \rangle$. By \cite[Theorem 13.6]{CTFG}, it follows that
	\begin{equation*}
		\tau_P(y) = \epsilon \pi_x(\theta_{p'})(1),
	\end{equation*}
	where, by \cite[Theorem 13.14]{CTFG}, $\epsilon$ is the unique sign such that $\epsilon \equiv [({\theta_{p'}})_{X\cap H}, \pi_x(\theta_{p'})] = [\theta_D, \varphi] \pmod{p}$. By our previous description of $\nu_P$,
	\begin{equation*}
		\nu_P(y) = \pi_x(\theta_{p'})(1).
	\end{equation*}
	Just as was done in \Cref{ThmBKey}, we have
	\begin{equation*}
		\chi(y) = \frac{\tau_P(y)}{\tau_P^\dagger(y)} \chi^\dagger(y) = \frac{\tau_P(y)}{\nu_P(y)} \chi^\dagger(y) = \epsilon \chi^\dagger(y),
	\end{equation*}
	as we wanted to show.
\end{proof}

We now start removing some of the hypotheses from the previous result, beginning with the one on the centrality of $L$. This is done in very similar fashion to \cite[Theorem 3.2]{PickyPSolvable}, with several small adaptations throughout.

\begin{thm}
	\label{Part2}
	Let $G$ be a finite group, $p$ and odd prime, $L \leq K \leq G$ such that $L, K \lhd G$ and $K/L$ is a $p'$-group. Suppose $x \in G$ is a $p$-element such that $\langle xL \rangle {\triangleleft}{\triangleleft} \Sub_{G/L}(xL)$. Writing $H/L = \Sub_{G/L}(xL)$, suppose also that $G = KH$ and write $D = K \cap H$. Suppose $\theta \in \Irr(K)$ is $G$-invariant and let $f_x(\theta) = \varphi \in \Irr(D)$ be the correspondent of $\theta$ as in \Cref{relativeCorrespondence}. Suppose also that there exists $\xi \in \Irr(L)$ under $\varphi$ which is $G$-invariant. Then, there exists a bijection
	\begin{equation*}
		F: \Irr(G \mid \theta) \to \Irr(H \mid \varphi)
	\end{equation*}
	such that $F(\chi)(1)_p = \chi(1)_p$ and $\chi(y) = \epsilon F(\chi)(y)$ for all $p$-elements $y$ such that $\langle yL \rangle {\triangleleft}{\triangleleft} \Sub_{G/L}(yL)$ and $\Sub_{G/L}(yL) = H/L$, where $\epsilon$ is the unique sign such that $[\theta_D, \varphi] \equiv \epsilon \pmod{p}$.
\end{thm}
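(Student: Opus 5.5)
The plan is to reduce to \Cref{Part1} by a character triple isomorphism that makes $L$ central, as in \cite[Theorem 3.2]{PickyPSolvable}. Since $\xi \in \Irr(L)$ is $G$-invariant, take a character triple $(G^\dagger, L^\dagger, \xi^\dagger)$ isomorphic to $(G, L, \xi)$ with $L^\dagger \subseteq \ZB(G^\dagger)$. Write $\dagger$ both for the isomorphism $G/L \to G^\dagger/L^\dagger$ and for the induced bijections $\Irr(J \mid \xi) \to \Irr(J^\dagger \mid \xi^\dagger)$ for $L \leq J \leq G$. Choose the isomorphism so that it preserves values on $p$-elements up to controllable factors. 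The cleanest choice is one in which $L^\dagger$ has $p'$-order, or where $\dagger$ comes from a projective representation whose factor set has $p'$-order, so that $p$-elements of $G/L$ lift to $p$-elements of $G^\dagger$.

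Next, check that the hypotheses of \Cref{Part1} transfer. Set $K^\dagger, H^\dagger, D^\dagger = K^\dagger \cap H^\dagger$ as the preimages of $K/L$, $H/L$, $D/L$. Pick a $p$-element $x^\dagger$ with $x^\dagger L^\dagger = (xL)^\dagger$. Since subnormalizers are defined in terms of the quotient by $L$, we get $\Sub_{G^\dagger/L^\dagger}(x^\dagger L^\dagger) = H^\dagger/L^\dagger$ with $\langle x^\dagger L^\dagger\rangle$ subnormal in it, and $G^\dagger = K^\dagger H^\dagger$. Both $\theta$ and $\varphi$ lie over $\xi$: $\varphi$ does by hypothesis, and then so does $\theta$, because $\varphi$ lies under $\theta$. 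Set $\theta^\dagger$ and $\varphi^\dagger$; then $\theta^\dagger$ is $G^\dagger$-invariant. Character triple isomorphisms preserve restriction multiplicities and commute with the $\langle x\rangle$-action, since the action on $\Irr(J\mid\xi)$ factors through $G/L$. Hence $\varphi^\dagger$ is the $\langle x^\dagger\rangle$-invariant constituent of $\theta^\dagger_{D^\dagger}$ with multiplicity prime to $p$, so $\varphi^\dagger = f_{x^\dagger}(\theta^\dagger)$ and $[\theta_D,\varphi] = [\theta^\dagger_{D^\dagger},\varphi^\dagger]$. The sign $\epsilon$ is therefore the same.

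Apply \Cref{Part1} to get $F^\dagger : \Irr(G^\dagger\mid\theta^\dagger) \to \Irr(H^\dagger\mid\varphi^\dagger)$, and define $F$ by $F(\chi)^\dagger = F^\dagger(\chi^\dagger)$. The degree condition follows because $\dagger$ preserves degree ratios $\chi(1)/\xi(1)$, while $\xi(1)$ and $\xi^\dagger(1)$ are common to both sides.

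The main obstacle is the value condition. Character triple isomorphisms do not preserve values; they only preserve ratios against a fixed extension over $\langle y\rangle L$. Following \cite[Lemma 5.17]{CTMcKC} and the argument of \cite[Theorem 3.2]{PickyPSolvable}, take $y$ as in the statement and a $p$-element $y^\dagger$ over $(yL)^\dagger$. Choose compatible extensions $\hat\xi$ of $\xi$ to $L\langle y\rangle$ and $\hat\xi^\dagger$ of $\xi^\dagger$ to $L^\dagger\langle y^\dagger\rangle$; one can use the canonical ($p$-power order determinantal) extension when $\xi$ is $p'$-degree-related, otherwise any extension. Then $\chi(y)/\hat\xi(y) = \chi^\dagger(y^\dagger)/\hat\xi^\dagger(y^\dagger)$, and the same holds for $F(\chi)$ on $H$, with the same extensions, since $L\langle y\rangle \leq H$. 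The constant $\hat\xi(y)/\hat\xi^\dagger(y^\dagger)$ therefore cancels, and $\chi(y)=\epsilon F(\chi)(y)$ follows from \Cref{Part1}. The delicate point is justifying that the same ratio constant works in $G$ and in $H$, i.e.\ that the isomorphism on $H$ is the restriction of the one on $G$. This holds because a character triple isomorphism is defined compatibly on all intermediate subgroups $L \le J \le G$. One must also ensure that the possible vanishing of $\hat\xi(y)$ does not occur, and I would handle this by choosing the extension to be nonvanishing at $y$ or by working with the defining projective representations directly.
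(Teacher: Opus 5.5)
Your proposal is correct and follows essentially the same route as the paper: pass to a character triple with $L^\dagger$ central, check that the hypotheses of \Cref{Part1} transfer (including $\varphi^\dagger = f_{x^\dagger}(\theta^\dagger)$ and the same sign), define $F$ through $\dagger$, and recover the value condition from the constant $\hat\xi(y)/\hat\xi^\dagger(y^\dagger)$, as in the argument of \cite[Lemma 5.17]{CTMcKC}. Your side worries are unnecessary, and a $p'$-order $L^\dagger$ or factor set is in general not even available: every $p$-element of $G^\dagger/L^\dagger$ is the image of a $p$-element of $G^\dagger$ (the $p$-part of any preimage), and possible vanishing of $\hat\xi(y)$ is harmless because only $\hat\xi^\dagger(y^\dagger)$ has to be nonzero, which holds since $\hat\xi^\dagger$ is linear, so you should just state the identity in the form $\chi(y)\hat\xi^\dagger(y^\dagger) = \hat\xi(y)\chi^\dagger(y^\dagger)$.
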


\begin{proof}
	Let $(G^*, L^*, \xi^*)$ be a character triple \emph{strongly} isomorphic to $(G, L, \xi)$ with $L^* \subseteq \ZB(G^*)$ (see \cite[Problem 5.4]{CTMcKC}, for instance). Write (simultaneously, with some abuse of notation) $*$ for the isomorphism $G/L \to G^*/L^*$ and bijections of characters $\Irr(J \mid \xi) \to \Irr(J^* \mid \xi^*)$, where $L \leq J \leq G$ and $J/L \cong J^*/L^*$. Let $x^*$ be a $p$-element of $G^*$ such that $x^*L^* = (xL)^*$.
	
	Notice how $(\Sub_{G/L}(xL))^* = \Sub_{G^*/L^*}(x^*L^*)$, since the normalizers of the Sylow $p$-subgroups of $G/L$ containing $xL$ get mapped onto those of the Sylow $p$-subgroups of $G^*/L^*$ containing $x^*L^*$ and vice-versa. Consequently, $\langle x^*L^* \rangle$ is subnormal in $\Sub_{G^*/L^*}(x^*L^*)$. Also, $(\theta^*)^{(gL)^*} = (\theta^{gL})^* = \theta^*$ for all $g \in G$. The same can be done for $\varphi$ and $h \in H$ ($\varphi$ is $H$-invariant by \Cref{inertiaGroupLemma}). From this, it follows that $\varphi^* = f_{x^*}(\theta^*)$ and that $G^*$ satisfies the hypotheses of \Cref{Part1}. 
	
	We then have a bijection $\tilde{F}:\Irr(G^* \mid \theta^*) \to \Irr(H^* \mid \varphi^*)$ that satisfies the conditions $\chi(1)_p = \tilde{F}(\chi)(1)_p$ and $\chi(y^*) = \epsilon \tilde{F}(\chi)(y^*)$ for all $p$-elements $y^*$ such that $\langle y^*L^* \rangle {\triangleleft}{\triangleleft} \Sub_{G^*/L^*}(y^*L^*)$ and $\Sub_{G^*/L^*}(y^*L^*) = H^*/L^*$, where $\epsilon$ is as in the statement of the theorem (since $[\theta_D, \varphi] = [\theta^*_{D^*}, \varphi^*]$, by the properties of character triple isomorphisms). 
	
	By the definitions of character triple isomorphisms, $*$ induces bijections $\Irr(G \mid \theta) \to \Irr(G^* \mid \theta^*)$ and $\Irr(H \mid \varphi) \to \Irr(H^* \mid \varphi^*)$. Then, using $*$ and $\tilde{F}$, we can obtain a bijection $F: \Irr(G \mid \theta) \to \Irr(H \mid \varphi)$, defined by $F(\chi)^* = \tilde{F}(\chi^*)$, which we claim to be the bijection from the statement.
	
	First, since character triple isomorphisms preserve character degree ratios, we obtain
	\begin{equation*}
		\frac{\chi(1)_p}{\xi(1)_p} = \frac{\chi^*(1)_p}{\xi^*(1)_p}= \frac{\tilde{F}(\chi^*)(1)_p}{\xi^*(1)_p}= \frac{F(\chi)^*(1)_p}{\xi^*(1)_p} = \frac{F(\chi)(1)_p}{\xi(1)_p},
	\end{equation*}
	from which $\chi(1)_p = F(\chi)(1)_p$. 
	
	For the second property, let $y$ be a $p$-element such that $\langle yL \rangle {\triangleleft}{\triangleleft} \Sub_{G/L}(yL)$ and $\Sub_{G/L}(yL) = H/L$. Let $y^*$ be a $p$-element of $G^*$ such that $y^*L^* = (yL)^*$. Just as was the case with $x$, $H^*/L^* = (\Sub_{G/L}(yL))^* = \Sub_{G^*/L^*}(y^*L^*)$ and $\langle y^*L^* \rangle {\triangleleft}{\triangleleft} H^*/L^*$. Consequently, $\chi(y^*) = \epsilon \tilde{F}(y^*)$ for all $\chi \in \Irr(G^* \mid \theta^*)$.
	
	Let $\tau$ be an extension of $\xi$ to $L\langle y \rangle$. By the proof of \cite[Lemma 5.17]{CTMcKC}, we get
	\begin{equation*}
		\chi(y) = \frac{\tau(y)}{\tau^*(y^*)}\chi^*(y^*) = \epsilon \frac{\tau(y)}{\tau^*(y^*)} \tilde{F}(\chi^*)(y^*),
	\end{equation*}
	with $\epsilon$ as in the statement. By the same argument,
	\begin{equation*}
		F(\chi)(y) = \frac{\tau(y)}{\tau^*(y^*)} F(\chi)^*(y^*) = \frac{\tau(y)}{\tau^*(y^*)} \tilde{F}(\chi^*)(y^*),
	\end{equation*} 
	meaning $\chi(y) = \epsilon F(\chi)(y)$, as desired.
\end{proof}

Next, we remove the condition on the existence of a $G$-invariant $\xi$ under $\varphi$.

\begin{thm}
	\label{Part3}
	Let $G$ be a finite group, $p$ and odd prime, $L \leq K \leq G$ such that $L, K \lhd G$ and $K/L$ is a $p'$-group. Suppose $x \in G$ is a $p$-element such that $\langle xL \rangle {\triangleleft}{\triangleleft} \Sub_{G/L}(xL)$. Writing $H/L = \Sub_{G/L}(xL)$, suppose also that $G = KH$ and write $D = K \cap H$. Suppose $\theta \in \Irr(K)$ is $G$-invariant and let $f_x(\theta) = \varphi \in \Irr(D)$ be the correspondent of $\theta$ as in \Cref{relativeCorrespondence}. Then, there exists a bijection
	\begin{equation*}
		F: \Irr(G \mid \theta) \to \Irr(H \mid \varphi)
	\end{equation*}
	such that $F(\chi)(1)_p = \chi(1)_p$ and $\chi(y) = \epsilon F(\chi)(y)$ for all $p$-elements $y$ such that $\langle yL \rangle {\triangleleft}{\triangleleft} \Sub_{G/L}(yL)$ and $\Sub_{G/L}(yL) = H/L$, where $\epsilon$ is the unique sign such that $[\theta_D, \varphi] \equiv \epsilon \pmod{p}$.
\end{thm}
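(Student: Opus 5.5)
The plan is to reduce to \Cref{Part2} by a Clifford-theoretic argument over $L$, in the manner of \cite[Theorem 3.3]{PickyPSolvable}. Since $\theta$ is $G$-invariant and $\varphi$ is $H$-invariant (by \Cref{inertiaGroupLemma}, using $\langle xL\rangle{\triangleleft}{\triangleleft} H/L$), I would first choose an irreducible constituent $\xi$ of $\varphi_L$. Because $D/L$ is a $p'$-group and $\varphi$ is $\langle x\rangle$-invariant, \Cref{ElementaryCoprimeActionLemma} gives an $\langle x\rangle$-invariant such $\xi$, and it lies under $\theta$ as well. Set $G_1 = G_\xi$, $K_1 = K_\xi$, $H_1 = H_\xi$ and $D_1 = D_\xi$. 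Since $\theta$ is $G$-invariant, $G = K G_\xi$ by a Frattini-type argument, and similarly $H = D H_\xi$. The key point is that $H_1 = H \cap G_1$ is again $L\Sub_{G_1}(x)$. Here I would use \Cref{SubnormalizerProperties}(ii), which applies because $\langle xL\rangle$ is subnormal in the subnormalizer. This gives $\Sub_{G_1/L}(xL) = H_1/L$, with $\langle xL\rangle$ still subnormal in it, and $G_1 = K_1 H_1$ follows from $G = KH$ and $G = KG_1$.

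Next I would take the Clifford correspondents $\theta_1\in \Irr(K_1\mid\xi)$ of $\theta$ and $\varphi_1 \in\Irr(D_1\mid \xi)$ of $\varphi$. I need to show three things: $\theta_1$ is $G_1$-invariant, $\varphi_1$ is $H_1$-invariant, and $\varphi_1 = f_x(\theta_1)$. For the last, I would compare multiplicities: $[\theta_D,\varphi]$ is computed via Mackey/Clifford as $[(\theta_1)_{D_1},\varphi_1]$ modulo $p$, as in the proof of \Cref{relativeCorrespondence}, and then I would invoke the uniqueness in \Cref{relativeCorrespondence}. This also shows that the sign $\epsilon$ is unchanged. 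With these in hand, \Cref{Part2} applied to $(G_1,K_1,L,\theta_1,\varphi_1,\xi)$ yields a bijection $F_1:\Irr(G_1\mid\theta_1)\to\Irr(H_1\mid\varphi_1)$ with the required properties.

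Finally, I would define $F(\psi^G) = F_1(\psi)^H$ through the Clifford correspondences $\Irr(G_1\mid\theta_1)\to \Irr(G\mid\theta)$ and $\Irr(H_1\mid\varphi_1)\to\Irr(H\mid\varphi)$. Degrees work out because $|G:G_1| = |K:K_1|$ and $|H:H_1| = |D:D_1|$ are $p'$-numbers. For the values, let $y$ be admissible. I would write $G = K H$ and expand the induction formula for $\psi^G(y)$ as in the proof of \Cref{ThmBKey}. The conjugates $y^h$ with $h\in H$ remain admissible for $G_1$, since $\Sub_{G/L}(y^hL) = (H/L)^{hL} = H/L$ and (ii) restricts this to $G_1$. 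The $K$-part of the sum needs more care. If $y^k\in G_1$ with $k\in K$, I would try to replace $k$ modulo $K_1$ by an element of $H$, using that the $\langle y\rangle$-invariant constituents of $\theta_L$ are $\CB_{K/L}(y)$-conjugate (\Cref{ElementaryCoprimeActionLemma}) and $\CB_{K/L}(yL)\le D/L$. This way both sums index the same conjugates, and $\psi^G(y) = \epsilon F_1(\psi)^H(y)$. I expect the main obstacle to be this comparison of the two induction sums over $K$ versus $D$ together with the check $\Sub_{G_1/L}(yL) = H_1/L$ for every admissible $y$. The restriction property of (ii) is exactly what rescues it, so that is where I would focus first.
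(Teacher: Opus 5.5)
Your proposal is correct. Its architecture is the paper's: the same choice of an $\langle x\rangle$-invariant $\xi$ under $\varphi$, the factorizations $G=KG_\xi$, $H=DH_\xi$ and $G_\xi=K_\xi H_\xi$, the reduction to \Cref{Part2} via Clifford correspondents, the bijection $F(\psi^G)=F_1(\psi)^H$, and the same degree and sign arguments. Two small remarks on these shared steps. First, $G_\xi=K_\xi H_\xi$ needs $H=DH_\xi$, not just $G=KH$ and $G=KG_\xi$. Second, your Frobenius-reciprocity comparison actually gives the equality $[\theta_D,\varphi]=[(\theta_1)_{D_1},\varphi_1]$, which is slightly sharper than the congruence the paper proves.

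Where you genuinely differ is the crux. Put $\Delta=\{k\in K\mid kyk^{-1}\in G_\xi\}$ and $\Omega=\{d\in D\mid dyd^{-1}\in H_\xi\}$; the key step is the inclusion $\Delta\subseteq K_\xi\Omega$.

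\emph{The paper's route.} It first replaces $\xi$ by a $\langle y\rangle$-invariant conjugate $\xi^c$ with $c\in D$, and transports $\tilde F$ to an auxiliary bijection $\hat F$. It then argues via Sylow theory. Subnormality of $K_{\xi^c}\langle y\rangle$ in $G_{\xi^c}$ puts $y$ and a $K_{\xi^c}$-conjugate of $kyk^{-1}$ into a common $LQ$. The element relating two Sylow $p$-subgroups of $G/L$ that contain $yL$ is then forced into $\Sub_{G/L}(yL)=H/L$.

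\emph{Your route.} It is purely character-theoretic. The condition $kyk^{-1}\in G_\xi$ says that $\xi^k$ is $\langle y\rangle$-invariant. Part (i) of \Cref{ElementaryCoprimeActionLemma}, applied to $\theta$ and the $\langle y\rangle$-action on $K/L$, shows that the $\langle y\rangle$-invariant constituents of $\theta_L$ form a single orbit under the preimage of $\CB_{K/L}(yL)\le D/L$. Hence $\xi^k=\xi^{cm}$ with $cm\in D$, so $k\in K_\xi\cdot cm$ and $cm\in\Omega$.

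For this to work you must say where $c$ comes from; without it the orbit argument has nothing in $\xi^D$ to compare $\xi^k$ with. Since $y\in H$ and $\varphi$ is $H$-invariant, part (i) of \Cref{ElementaryCoprimeActionLemma} applied to $\varphi$ and $D$ gives a $\langle y\rangle$-invariant constituent $\xi^c$ of $\varphi_L$ with $c\in D$.

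With that in place, your version is shorter. The formula $\psi^G(y)=|K_\xi|^{-1}\sum_{k\in\Delta}\psi(kyk^{-1})$ holds for $G=KG_\xi$ whether or not $y\in G_\xi$. So you can also drop the conjugation to $\xi^c$ and the auxiliary $\hat F$. At this step you need only $\CB_{K/L}(yL)\le D/L$, rather than the subnormality of $K_{\xi^c}\langle y\rangle$ and the Sylow-conjugation property of the subnormalizer.
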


\begin{proof}
	By \Cref{ElementaryCoprimeActionLemma}, there exists some $\xi \in \Irr_{\langle x \rangle}(L)$ lying under $\varphi$. Let $G_\xi$ be the stabilizer of $\xi$ in $G$ and let $g \in G$. Since $\theta$ is $G$-invariant, $\xi^g$ is an irreducible constituent of $\theta_L$. Consequently, there exists $k \in K$ such that $\xi^g = \xi^k$. Hence, $g = ky$, where $y \in G_\xi$. As $g$ was arbitrary, it follows that $G = KG_\xi$. Using the same logic with the fact that $\varphi$ is $H$-invariant (by \Cref{inertiaGroupLemma}) we get $H = DH_\xi$. Thus, $G = KH_\xi$, meaning $G_\xi = K_\xi H_\xi$; it also follows that $D_\xi = K_\xi \cap H_\xi$. Also notice that, since $\Sub_{G_\xi/L}(xL) \leq \Sub_{G/L}(xL)$, we have $\langle xL \rangle {\triangleleft}{\triangleleft} \Sub_{G_\xi/L}(xL)$.
	
	Let $\theta_\xi$ and $\varphi_\xi$ be the Clifford correspondents of $\theta$ and $\varphi$ over $\xi$, respectively. Notice how $\CB_{K_\xi/L}(x) \subseteq K_\xi /L \cap H/L = D_\xi/L$, from which $f_x(\theta_\xi)$ is defined. Arguing as in \cite[Theorem 3.4]{PickyPSolvable}, we see that $f_x(\theta_\xi) = \varphi_\xi$. Consequently, $G_\xi$ (with the appropriate subgroups) is under the conditions of \Cref{Part2}. By that result, we obtain a bijection $\tilde{F}: \Irr(G_\xi \mid \theta_\xi) \to \Irr(H_\xi \mid \varphi_\xi)$ satisfying $\chi(1)_p = \tilde{F}(\chi)(1)_p$ and $\chi(y) = \epsilon \tilde{F}(\chi)(y)$ for all $p$-elements $y \in G_\xi$ such that $\langle yL \rangle {\triangleleft}{\triangleleft} \Sub_{G_\xi/L}(yL)$ and $\Sub_{G_\xi/L}(yL) = H_\xi/L$, where $\epsilon \equiv [(\theta_\xi)_{D_\xi}, \varphi_\xi] \pmod{p}$.
	
	By the Clifford correspondence, induction of characters defines a bijection $\Irr(G_\xi \mid \xi) \to \Irr(G \mid \xi)$. Following the argument of \cite[Theorem 3.4]{PickyPSolvable}, this bijection restricts to a bijection $\Irr(G_\xi \mid \theta_\xi) \to \Irr(G \mid \theta)$. Analogously, we get a bijection $\Irr(H_\xi \mid \varphi_\xi) \to \Irr(H \mid \varphi)$. So, we can combine these with our bijection of the previous paragraph to get a bijection $F: \Irr(G \mid \theta) \to \Irr(H \mid \varphi)$ defined by the property that $F(\psi^G) = \tilde{F}(\psi)^H$, where $\psi \in \Irr(G_{\xi} \mid \theta_\xi)$. We now have to prove that this is our desired bijection. 
	
	For the first part, notice how
	\begin{equation*}
		F(\psi^G)(1)_p = \tilde{F}(\psi)^H(1)_p = |H:H_\xi|_p\psi(1)_p = \frac{|H:H_\xi|_p}{|G:G_\xi|_p}\psi^G(1)_p.
	\end{equation*}
	From the fact that $G = KH_\xi$ and that $K/L$ is a $p'$-group, we get $|G|_p = |H_\xi|_p$; in particular, it also holds that $|G|_p = |G_\xi|_p$. Then, the previous equation yields $F(\psi^G)(1)_p = \psi^G(1)_p$. 
	
	Now let $y$ be a $p$-element of $G$ such that $\langle yL \rangle {\triangleleft}{\triangleleft} \Sub_{G/L}(yL)$ and $\Sub_{G/L}(yL) = H/L$. Then $y \in H$, meaning $\varphi$ is $\langle y \rangle$-invariant. By \Cref{ElementaryCoprimeActionLemma} and Clifford's theorem, there exists $c \in D$ such that $\xi^c$ is $\langle y \rangle$-invariant. 
	
	Notice how $(G_\xi)^c = G_{\xi^c}$ (and analogously for $K_\xi, H_\xi, D_\xi$) and how $(\theta_\xi)^c \in \Irr(K_{\xi^c} \mid \xi^c)$ is such that $(\theta_\xi^c)^K = \theta^c = \theta$. Consequently, $\theta_\xi^c = \theta_{\xi^c}$ is the Clifford correspondent of $\theta$ over $\xi^c$. Similarly, $\varphi_\xi^c = \varphi_{\xi^c}$ is the Clifford correspondent of $\varphi$ over $\xi^c$. Then, conjugation by $c$ induces bijections $\Irr(G_\xi \mid \theta_\xi) \to \Irr(G_{\xi^c} \mid \theta_{\xi^c})$ and $\Irr(H_\xi \mid \varphi_\xi) \to \Irr(H_{\xi^c} \mid \varphi_{\xi^c})$. So we are able to construct a new bijection $\hat{F}: \Irr(G_{\xi^c} \mid \theta_{\xi^c}) \to \Irr(H_{\xi^c} \mid \varphi_{\xi^c})$ given by $\hat{F}(\psi^c) = \tilde{F}(\psi)^c$. We claim that this new function also satisfies the properties we had for $\tilde{F}$.
	
	First, for the $p$-parts of the degrees, we have 
	\begin{equation*}
		\hat{F}(\psi^c)(1)_p = \tilde{F}(\psi)^c(1)_p = \tilde{F}(\psi)(1)_p = \psi^c(1)_p,
	\end{equation*}
	for all $\psi \in \Irr(G_\xi \mid \theta_\xi)$. Thus, $\hat{F}(\chi)(1)_p = \chi(1)_p$ for all $\chi \in \Irr(G_{\xi^c} \mid \theta_{\xi^c})$.
	
	Now let $z \in G_{\xi^c}$ be a $p$-element such that $\langle zL \rangle {\triangleleft}{\triangleleft} \Sub_{G_{\xi^c}/L}(zL)$ and $\Sub_{G_{\xi^c}/L}(zL) = H_{\xi^c}/L$. Then, we may write $z = \tilde{z}^c$ for some $p$-element $\tilde{z} \in G_\xi$. Also, since $\langle zL \rangle {\triangleleft}{\triangleleft} \Sub_{G_{\xi^c}/L}(zL)$, then $\langle \tilde{z}L \rangle {\triangleleft}{\triangleleft} (\Sub_{G_{\xi^c}/L}(zL))^{c^{-1}} = \Sub_{G_\xi/L}(\tilde{z}L)$. And since $\Sub_{G_{\xi^c}/L}(zL) = H_{\xi^c}/L$, then $\Sub_{G_\xi/L}(\tilde{z}L) = H_\xi/L$. Consequently, given $\psi \in \Irr(G_\xi \mid \theta_\xi)$, we have
	\begin{equation*}
		\psi^c(z) = \psi(\tilde{z}) = \epsilon \tilde{F}(\psi)(\tilde{z}) = \epsilon \tilde{F}(\psi)^c(z) = \epsilon \hat{F}(\psi^c)(z),
	\end{equation*} 
	as wanted. Also, $F(\psi^G) = \tilde{F}(\psi)^H = (\tilde{F}(\psi)^c)^H = \hat{F}(\psi^c)^H$. We will thus use $\hat{F}$ to calculate $F(\psi^G)(y)$.
	
	Let $\Delta = \{k \in K \mid kyk^{-1} \in G_{\xi^c}\}$ and let $\Omega = \{d \in D \mid dyd^{-1} \in H_{\xi^c}\}$. We claim that $\Delta = K_{\xi^c}\Omega$ as a set. For one inclusion, if $k \in K_{\xi^c}$ and $d \in \Omega$, then $dyd^{-1} \in H_{\xi^c}$ and $k \in K_{\xi^c}$, from which $kdyd^{-1}k^{-1} \in G_{\xi^c}$. Also, since $D \leq K$, we have $kd \in K$.
	
	For the other inclusion, let $k \in \Delta$, so that $kyk^{-1} \in G_{\xi^c}$. Then, there exists $P \in \Syl_p(G_{\xi^c})$ such that $kyk^{-1} \in K_{\xi^c}P$. We also know that $K_{\xi^c}\langle y \rangle = G_{\xi^c} \cap (K\langle y \rangle)$, since $y \in G_{\xi^c}$. But $G = KH$ and $H = L \Sub_G(y)$. Since $\langle yL \rangle {\triangleleft}{\triangleleft} H/L$, then $K\langle y \rangle {\triangleleft}{\triangleleft} G$. Putting everything together yields $K_{\xi^c} \langle y \rangle {\triangleleft}{\triangleleft} G_{\xi^c}$. Then, $(K_{\xi^c} \langle y \rangle)/K_{\xi^c}$ is a subnormal $p$-subgroup of $G_{\xi^c}/K_{\xi^c}$, meaning it is contained in every Sylow $p$-subgroup of this quotient (by, for instance, \cite[Problem 2A.1]{FGT}). In particular, $y \in K_{\xi^c}P$. Thus, there exists $Q \in \Syl_p(K_{\xi^c}P)$ such that $y \in Q$. By order considerations, it follows that $K_{\xi^c}P = K_{\xi^c}Q$.
	
	As $K_{\xi^c}/L$ is a group of $p'$-order, $(LQ)/L$ is a Sylow $p$-subgroup of $(K_{\xi^c}Q)/L$. Thus, there exists $v \in K_{\xi^c}$ such that $vkyk^{-1}v^{-1} \in LQ$. This means that $yL \in (LQ)/L$ and $yL \in (LQ^{vk})/L$. Then, there exists $sL \in \Sub_{G/L}(yL)$ such that $vksL \in \NB_{G/L}(LQ/L) \leq \Sub_{G/L}(yL)$. This implies $vkL \in \Sub_{G/L}(yL) = H/L$, from which $vkL \in K/L \cap H/L = D/L$ and $vk \in D$. Finally, since $v \in K_{\xi^c}$ and $k \in \Delta$, we have $v(kyk^{-1})v^{-1} \in H_{\xi^c}$. By definition, then, $vk \in \Omega$ and $k = v^{-1}(vk) \in K_{\xi^c}\Omega$.
	
	Notice how $K_{\xi^c} \cap \Omega = \{d \in D_{\xi^c} \mid dyd^{-1} \in H_{\xi^c}\} = D_{\xi^c}$. Define $\Psi: K_{\xi^c} \times \Omega \to \Delta$ by $\Psi(k, d) = kd$. Fix $a \in \Delta$ and write $a = \Psi(k, d)$. For each $b \in D_{\xi^c}$, we have $b^{-1}dyd^{-1}b \in H_{\xi^c}$, meaning $b^{-1}d \in \Omega$, and $\Psi(kb, b^{-1}d) = a$. At the same time, if $a = \Psi(k', d')$, then $k^{-1}k' = dd'^{-1} \in K_{\xi^c} \cap D = D_{\xi^c}$. This implies that $a$ can be written in exactly $|D_{\xi^c}|$ ways as a product in $K_{\xi^c}\Omega$.
	
	By \cite[Lemma 3.3]{PickyPSolvable}, using that $G = KG_{\xi^c}$, we get
	\begin{equation*}
		(\psi^c)^G(y) = \frac{1}{|K_{\xi^c}|}\sum_{k \in \Delta} \psi^c(kyk^{-1}).
	\end{equation*}
	By what we did in the previous paragraph, we can write this as
	\begin{equation*}
		(\psi^c)^G(y) = \frac{1}{|K_{\xi^c}||D_{\xi^c}|}\sum_{k \in K_{\xi^c}} \sum_{d \in \Omega} \psi^c(kdyd^{-1}k^{-1}) = \frac{1}{|D_{\xi^c}|}\sum_{d \in \Omega} \psi^c(dyd^{-1}),
	\end{equation*}
	where the last equality used that $\psi^c$ is $K_{\xi^c}$-invariant, as it is defined on $G_{\xi^c}$. 
	
	Now, if $d \in \Omega$, then $dyd^{-1} \in H_{\xi^c}$ is a $p$-element such that, since $\langle yL \rangle {\triangleleft}{\triangleleft} H/L$, then $\langle dyd^{-1}L \rangle {\triangleleft}{\triangleleft} H/L$, as $d \in H$. In particular $\langle dyd^{-1}L \rangle {\triangleleft}{\triangleleft} H_{\xi^c}/L$. Furthermore, since $\Sub_{G/L}(yL) = H/L$, then $\Sub_{G/L}(dyd^{-1}L) = H/L$, meaning $\Sub_{G_{\xi^c}/L}(dyd^{-1}L) = H_{\xi^c}/L$. By the properties of $\hat{F}$, it follows that $\psi^c(dyd^{-1}) = \epsilon \hat{F}(\psi^c)(dyd^{-1})$. Substituting above, we get
	\begin{equation*}
		\psi^G(y) = (\psi^c)^G(y) = \frac{\epsilon}{|D_{\xi^c}|}\sum_{d \in \Omega} \hat{F}(\psi^c)(dyd^{-1}) = \epsilon \hat{F}(\psi^c)^H(y) = F(\psi^G)(y),
	\end{equation*}
	as we wanted to show.
	
	Finally, write $\theta_{K_\xi} = \theta_\xi + R$ and $\varphi_{D_\xi} = \varphi_\xi + S$, where no irreducible constituent of $R, S$ lies over $\xi$, by the Clifford correspondence. Write also $\theta_D = e\varphi + pA + B$, where no irreducible constituent of $B$ is $\langle x \rangle$-invariant as in \Cref{relativeCorrespondence}. Then, we have
	\begin{equation*}
		(\theta_\xi)_{D_\xi} + R_{D_\xi} = e\varphi_\xi + pA_{D_\xi} + B_{D_\xi} + S.
	\end{equation*}
	Since no irreducible constituent of $R$ lies over $\xi$, while $\varphi_\xi$ does, it follows that
	\begin{equation*}
		(\theta_\xi)_{D_\xi} = e\varphi_\xi + pA' + B',
	\end{equation*}
	for some characters $A', B'$ such that all irreducible constituents of $B'$ lie under $B$. As $[\varphi, B] = 0$, we then get $[(\theta_\xi)_{D_\xi}, \varphi_\xi] \equiv e \equiv [\theta_D, \varphi] \pmod{p}$, as desired.
\end{proof}

Finally, we remove the condition that $\theta$ be $G$-invariant to prove \Cref{keyResult}. 

\begin{proof}[Proof of \Cref{keyResult}]
	Let $T = G_\theta$. Since $G = KH$, we have $T = KH_\theta$. Also, notice how $H_\theta/L = H/L \cap T/L = \Sub_{G_\theta/L}(xL)$, by \Cref{SubnormalizerProperties}. Hence, by \Cref{Part3}, there exists a bijection $\tilde{F}: \Irr(T\mid \theta) \to \Irr(H_\theta \mid \varphi)$ such that $\tilde{F}(\chi)(1)_p = \chi(1)_p$ and that $\chi(y) = \epsilon \tilde{F}(\chi)(y)$ for all $p$-elements $y \in T$ such that $\langle yL \rangle {\triangleleft}{\triangleleft} \Sub_{T/L}(yL)$ and $\Sub_{T/L}(yL) = H_\theta/L$. 
	
	Since $H_\theta = H_\varphi$ by \Cref{inertiaGroupLemma} we also have bijections $\Irr(T \mid \theta) \to \Irr(G \mid \theta)$ and $\Irr(H_\theta \mid \varphi) \to \Irr(H \mid \varphi)$ given by induction of characters. Composing this with $\tilde{F}$, we get a bijection $F(\psi^G) = (\tilde{F}(\psi))^H$. We claim this is the bijection we want. 
	
	First, $G = TH$, since $K \subseteq T$. Then, $|G:T| = |H:H_\theta|$ and we have
	\begin{equation*}
		F(\psi^G)(1)_p = \frac{|H:H_\theta|_p}{|G:T|_p}\psi^G(1)_p = \psi^G(1)_p,
	\end{equation*}
	as wanted. 
	
	Now let $y \in G$ be a $p$-element such that $\langle yL \rangle {\triangleleft}{\triangleleft} \Sub_{G/L}(yL)$ and $\Sub_{G/L}(yL) = H/L$. In particular, $y \in H$. We have $\psi^G(y) = (\psi^G)_H(y) = (\psi_{H_\theta})^H(y)$, by \cite[Problem 5.2]{CTFG}. By the induction formula, this gives us
	\begin{equation*}
		\psi^G(y) = \frac{1}{|H_\theta|} \sum_{\substack{h \in H \\ hyh^{-1} \in H_\theta}} \psi(hyh^{-1}).
	\end{equation*}
	
	Let $\Delta = \{h \in H \mid hyh^{-1} \in H_\theta\}$. Notice that, if $h \in \Delta$, then $hyh^{-1}$ is a $p$-element of $H_\theta$ such that $\langle hyh^{-1}L \rangle {\triangleleft}{\triangleleft} H/L$, since $h \in H$. Then, $\langle hyh^{-1}L \rangle {\triangleleft}{\triangleleft} H_{\theta}/L$. Also, $\Sub_{G/L}(hyh^{-1}L) = H/L$, again using $h \in H$. Consequently, $\Sub_{G_\theta/L}(hyh^{-1}L) = H_\theta/L$. Hence, by the properties of $\tilde{F}$, we have $\psi(hyh^{-1}) = \epsilon\tilde{F}(\psi)(hyh^{-1})$. Substituting above, we finally get
	\begin{equation*}
		\psi^G(y) = \frac{1}{|H_\theta|}\sum_{h \in \Delta} \epsilon\tilde{F}(\psi)(hyh^{-1}) = \epsilon \tilde{F}(\psi)^H(y) = \epsilon F(\psi^G)(y),
	\end{equation*}
	as desired. The fact that $\epsilon$ is the one we want follows from a similar argument to the end of \Cref{Part3}. This finishes the proof of \Cref{keyResult} and, with it, the proof of Theorem A is complete.
\end{proof}

In the proof of Theorem A, we needed $p \neq 2$ for two reasons. First, we needed it to ensure that the sign $\epsilon$ was consistent when applying the Clifford correspondence, even in the case where $G$ had a normal $p$-complement. We also needed it to ensure that the canonical extensions would map to one another when applying \Cref{TurullLemma}.

As for the condition $\langle x \rangle {\triangleleft}{\triangleleft} \Sub_G(x)$, it was used to ensure that the inertia groups of $\theta$ and $\varphi$ coincided in many different situations, through the use of \Cref{inertiaGroupLemma}. At the time of writing, it is not clear if it is possible to get an analogous result without this condition. It was also used, as mentioned before, to apply \Cref{TurullLemma} in \Cref{Part1}, by noticing that $\Sub_G(x)$ would be, in fact, a normalizer in that case.

We note that it is not too difficult to find non-picky $p$-elements satisfying the condition $\langle x \rangle {\triangleleft}{\triangleleft} \Sub_G(x)$. To that end, we present two constructions. 

Let $G$ be a finite group such that $x \in G$ is a picky $p$-element and let $H$ be a finite group which does not have a normal Sylow $p$-subgroup. Then, $(x, 1) \in G \times H$ is a $p$-element and, by \cite[Lemma 2.13]{GunterSubnormalizer}, $\Sub_{G \times H}(x, 1) = \Sub_G(x) \times H$. Since $x$ is picky, $\langle (x, 1) \rangle$ is subnormal in this subnormalizer and it is not picky, since $H$ does not have a normal Sylow $p$-subgroup.

For our second construction, we need a lemma.

\begin{lem}
	Let $x \in G$ be a $p$-element such that $S_G(x) \leq G$ and let $q$ be a prime. Then, in the group $\Gamma = G \wr C_q$, $S_\Gamma(x, \ldots, x) \leq \Gamma$.
\end{lem}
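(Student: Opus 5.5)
We aim to show directly that
\[
S_\Gamma(X) = H^q\langle\sigma\rangle, \qquad \text{where } X=(x,\ldots,x),\quad H=S_G(x)=\Sub_G(x),
\]
$B=G^q$ is the base group and $\sigma$ generates the top group $C_q$. The right-hand side is a subgroup, so this gives the lemma. Since $S_G(x)$ is a subgroup, $\langle x\rangle {\triangleleft}{\triangleleft} H$, and by \Cref{SubnormalizerProperties}(v) we have $H=\NB_G(Q)$, where $Q$ is the intersection of the Sylow $p$-subgroups of $G$ containing $x$. This argument does not depend on whether $q=p$.

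\emph{Step 1 (the inclusion $\supseteq$).} Taking a subnormal series of $\langle x\rangle$ in $H$ in each coordinate shows $\langle x\rangle^q {\triangleleft}{\triangleleft} H^q$. Since $\langle x\rangle^q$ is abelian and contains $X$, also $\langle X\rangle \unlhd \langle x\rangle^q$. The diagonal element $X$ commutes with $\sigma$, and $H^q \unlhd H^q\langle\sigma\rangle$, so $\langle X\rangle {\triangleleft}{\triangleleft} H^q\langle\sigma\rangle$. By \Cref{SubnormalizerProperties}(i), $H^q\langle\sigma\rangle\subseteq S_\Gamma(X)$.

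\emph{Step 2 (the inclusion $\subseteq$).} Let $\gamma=(g_1,\ldots,g_q)\sigma^k\in S_\Gamma(X)$ and put $K=\langle X,\gamma\rangle$, so that $\langle X\rangle{\triangleleft}{\triangleleft}K$. As $\sigma^k\in H^q\langle\sigma\rangle$, it suffices to prove $g_j\in H$ for every $j$. The subgroup $D=K\cap B$ is normal in $K$ and contains $X$, so $\langle X\rangle{\triangleleft}{\triangleleft}D$. Subnormality passes to homomorphic images, so applying the $i$-th coordinate projection $\pi_i$ gives $\langle x\rangle{\triangleleft}{\triangleleft}\pi_i(D)$. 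Hence $\pi_i(D)\subseteq S_G(x)=H$ for every $i$, that is, $D\le H^q$. Since $\gamma$ normalizes $D$, we get $X^\gamma\in D$, and therefore $x^{g_j}\in H$ for all $j$.

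\emph{Step 3 (upgrading to $g_j\in H$; the main obstacle).} Knowing only $x^{g_j}\in H$ does not yet give $g_j\in H=\NB_G(Q)$. I would pass to $R=\OB_p(K)$. This contains $X$, because a subnormal $p$-subgroup lies in $\OB_p$. It is normalized by $\gamma$, and it lies in $B$ when $q\ne p$; when $q=p$ one first replaces $R$ by $R\cap B$. The projections $R_i=\pi_i(R)$ are $p$-subgroups of $G$ containing $x$, and conjugation by $\gamma$ gives $R_{\sigma^k(j)}^{\,g_j}=R_j$, up to the indexing convention. Each $R_i$ lies in $H=\NB_G(Q)$, so $R_iQ$ is a $p$-subgroup containing $x$, and the maximal choices satisfy $R_iQ\supseteq Q$. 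I would try to show that $Q$ is the unique minimal $Q$-type object attached to $x$ in a conjugation-invariant way. Concretely, I expect $Q=\OB_p(H)$, or equivalently Casolo's equality $S_G(x)=S_G(Q)$ quoted in the proof of \Cref{SubnormalizerProperties}(v). This would force $g_j$ to carry $Q=Q_x$ to $Q_{x^{g_j}}$ and, combined with $x^{g_j}\in H$ and $\langle x^{g_j}\rangle{\triangleleft}{\triangleleft}H$, to normalize $Q$.

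If this identification of $Q$ proves awkward, the fallback is to apply Baer's criterion in the form that $\langle X\rangle{\triangleleft}{\triangleleft}K$ if and only if $\langle X,X^k\rangle$ is a $p$-group for all $k\in K$. One would then read off coordinatewise that $\langle x,x^{h}\rangle$ is a $p$-group for all $h$ in the subgroup $\langle x, g_j\cdots\rangle$ generated by suitable products of the $g_i$, which again yields membership in $S_G(x)=H$. Once $g_j\in H$ for all $j$ is established, Steps 1 and 2 give $S_\Gamma(X)=H^q\langle\sigma\rangle\le\Gamma$.
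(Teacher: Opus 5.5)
Your Steps 1 and 2 are correct. Step 3, which you yourself call the main obstacle, is not actually carried out: it offers an expectation and a fallback, and neither is a proof.

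\textbf{The gap.} Your main route needs $\langle x^{g_j}\rangle{\triangleleft}{\triangleleft}H$, but you never justify it. Since $\OB_p(H)=Q$ (as you guessed), this is equivalent to $x^{g_j}\in Q$. A $p$-element of $H$ need not lie in $\OB_p(H)$, so this does not follow from $x^{g_j}\in H$. It holds only a posteriori, once $g_j\in\NB_G(Q)$ is known, which is exactly what you are trying to prove.

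The Baer-criterion fallback is too vague to check. For $k\neq 0$ the group $K$ is not contained in $B$, so the coordinate maps are not homomorphisms on $K$. You also never exhibit a subgroup of $G$ containing $g_j$ on which $\langle x,x^h\rangle$ is shown to be a $p$-group for every $h$.

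\textbf{The missing ingredient.} This is the Sylow fact the paper's proof invokes: if $P\in\Syl_p(G)$ and $x\in P\cap P^{g}$, then $g\in\Sub_G(x)$. Here is why.
\begin{itemize}
\item $P$ and $P^g$ lie in $\Sub_G(x)$ by \Cref{SubnormalizerProperties}(i), and they are Sylow there.
\item Hence $P^g=P^h$ for some $h\in\Sub_G(x)$.
\item Then $gh^{-1}\in\NB_G(P)\le\Sub_G(x)$, so $g\in\Sub_G(x)$.
\end{itemize}

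With this fact, your remark that $x^{g_j}\in H$ ``does not yet give'' $g_j\in H$ is wrong; it does suffice. The element $x^{g_j}$ lies in some Sylow $p$-subgroup $P'$ of $H$. Since $H$ contains a Sylow $p$-subgroup of $G$, $P'$ is Sylow in $G$. Also $P'$ contains the normal $p$-subgroup $Q\ni x$ of $H$. Thus $x\in P'\cap (P')^{g_j^{-1}}$, and so $g_j\in H$.

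\textbf{Comparison with the paper.} Inserting this completes your argument, and the result is a genuinely different route. The paper bounds $\Sub_\Gamma(X)=\langle\NB_\Gamma(R)\mid X\in R\in\Syl_p(\Gamma)\rangle$ inside $\langle a\rangle\NB_G(Q)^q$. That requires computing Sylow normalizers in $\Gamma$ and treating $q\neq p$ and $q=p$ separately. Your element-wise argument, once repaired, is uniform in $q$.
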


\begin{proof}
	Since $S_G(x) \leq G$, we have $S_G(x) = \NB_G(Q)$, where $Q$ is the intersection of all Sylow $p$-subgroups containing $x$, by \Cref{SubnormalizerProperties}. Let $P \in \Syl_p(G)$ contain $x$ and write $\Gamma = \langle a \rangle B$, where $B \cong G^q$ is the base group and $\langle a \rangle \cong C_q$ acts by shifting coordinates to the right. 
	
	First, let us assume that $q \neq p$. Then, $P \times \cdots \times P$ is a Sylow $p$-subgroup of $\Gamma$, whose normalizer is easily seen to be $\langle a \rangle(\NB_G(P) \times \cdots \times \NB_G(P))$. Suppose $y = a^i(g_1, \ldots, g_q)$ is such that $(x, \ldots, x) \in (P \times \cdots \times P)^y$. Then, $x \in P^{g_i}$ for all $i$. By \cite[Lemma 2.9]{GunterSubnormalizer}, for instance, $g_i \in \Sub_G(x) = \NB_G(Q)$ for all $i$. 
	
	Now, we have that $\NB_\Gamma((P \times \cdots \times P)^y) = \langle a^y \rangle(\NB_G(P^{g_1}) \times \cdots \times \NB_G(P^{g_q}))$. Notice, by direct calculation, that $a^y = a(g_q^{-1}g_1, \ldots, g_{q-1}^{-1}g_q)$. By our hypothesis, since $\Sub_G(x) = \NB_G(Q)$ and $x \in P^{g_i}$, then, for each $i$, $\NB_G(P^{g_i}) \leq \NB_G(Q)$ by \Cref{SubnormalizerProperties} (i). At the same time, $a(g_q^{-1}g_1, \ldots, g_{q-1}^{-1}g_q) \in \langle a \rangle (\NB_G(Q) \times \cdots \times \NB_G(Q))$. Hence, we get
	\begin{equation*}
		\NB_\Gamma((P \times \cdots \times P)^y) \leq \langle a \rangle (\NB_G(Q) \times \cdots \times \NB_G(Q)),
	\end{equation*}
	where $y$ was an arbitrary element such that $(x, \ldots, x) \in (P \times \cdots \times P)^y$. Since $(x, \ldots, x)$ is clearly subnormal in $\langle a \rangle (\NB_G(Q) \times \cdots \times \NB_G(Q))$, by \cite[Proposition 2.6]{GunterSubnormalizer}, $S_{\Gamma}(x, \ldots, x)$ is a subgroup of $\Gamma$, as wanted.
	
	Now, assume that $q = p$. In this case, $\langle a \rangle(P \times \cdots \times P)$ is a Sylow $p$-subgroup of $\Gamma$. Direct computation yields that its normalizer in $\Gamma$ is contained in the subgroup $\langle a \rangle(\NB_G(P) \times \cdots \times \NB_G(P))$.
	
	Once again, let $y = a^i (g_1, \ldots, g_p)$ be such that $(x, \ldots, x) \in (\langle a \rangle(P \times \cdots \times P))^y$. Then, we have 
	\begin{align*}
		(x, \ldots, x) \in B \cap (\langle a^y \rangle(P^{g_1} \times \cdots \times P^{g_p})) &= (P^{g_1} \times \cdots \times P^{g_p}) (B \cap \langle a^y \rangle) \\
		&= P^{g_1} \times \cdots \times P^{g_p},
	\end{align*}
	using that $a^y = a^{(g_1, \ldots, g_p)}$ and $(g_1, \ldots, g_p) \in B$. This then implies $x \in P^{g_i}$ for all $i$, which, once more, means $g_i \in \Sub_G(x) = \NB_G(Q)$. 
	
	Just as before, $\NB_G(P^{g_i}) \leq \NB_G(Q)$ by \Cref{SubnormalizerProperties} (i). It follows that the normalizer in $\Gamma$ of $(\langle a \rangle (P \times \cdots \times P))^y$ is contained in $\langle a \rangle(\NB_G(Q) \times \cdots \times \NB_G(Q))$. Since $(x, \ldots, x)$ is subnormal in this latter group, we conclude once again that $S_\Gamma(x, \ldots, x) \leq \Gamma$, using \cite[Proposition 2.6]{GunterSubnormalizer}.
\end{proof}

So let $G$ be any $p$-solvable group containing a $p$-element $x$ such that $S_G(x) \leq G$ (for example, using the previous construction). Consider $\Gamma = G \wr C_q \wr C_p$, where $q \neq p$ is a prime. By \cite[Remark 4.7]{HuppertBlackburn}, for example, $\Gamma$ has $p$-length one more than that of $G$. At the same time, by the previous lemma applied twice, the subnormalizer of $(x, \ldots, x)$ is a subgroup of $\Gamma$.

\begin{ack}
	The author would like to express his utmost gratitude to Alexander Moretó for sharing notes on the conjectures and to Noelia Rizo for invaluable help in proofreading and discussing the contents of the article.
\end{ack}

% Bibliography
%%%%%%%%%%%%%%%%%%%%%%%%%%%%%%%%%%%%%%%%%%%%%%%%%%%%%%%%%%%%%

%\bibitem{PickyManuscript} {\sc A. Moretó; N. Rizo}, Local representation theory, picky elements and subnormalizers, \emph{Unpublished manuscript} (2025);
%\printbibliography

\end{document}